\newtheorem{theorem}{Theorem}[section]
\newtheorem{lemma}[theorem]{Lemma}
\newtheorem{proposition}[theorem]{Proposition}
\newtheorem{corollary}[theorem]{Corollary}
\newtheorem{algorithm}[theorem]{Algorithm}
\newtheorem{definition}[theorem]{Definition}
\newtheorem{remark}[theorem]{Remark}
\newtheorem{observation}[theorem]{Observation}
\newtheorem{example}[theorem]{Example}
\newenvironment{proof}[1][\it Proof.]{\begin{trivlist}\item[\hskip \labelsep {\bfseries #1}]}{$\Box$\end{trivlist}}
\newcommand{\N}{\ensuremath{\mathbb{N}}}
\newcommand{\Z}{\ensuremath{\mathbb{Z}}}
\newcommand{\R}{\ensuremath{\mathbb{R}}}
\newcommand{\CC}{\ensuremath{\mathbb{C}}}
\newcommand{\A}{\ensuremath{\mathcal{A}}}
\newcommand{\I}{\ensuremath{\mathcal{I}}}
\newcommand{\F}{\ensuremath{\mathcal{F}}}
\newcommand{\LL}{\ensuremath{\mathcal{L}}}
\newcommand{\mcc}{\ensuremath{C}}
\begin{document}
\title{Tropical Homotopy Continuation}
\author{Anders Nedergaard Jensen\\ Technische Universit\"at Kaiserslautern
}
\maketitle
\begin{abstract}
Inspired by numerical homotopy methods we propose a combinatorial homotopy algorithm for finding all isolated solutions to a tropical polynomial systems of $n$ tropical polynomials in $n$ variables. In particular, a tropicalisation of the numerical ``regeneration'' technique 
leads to a new method for enumerating the mixed cells of a mixed subdivision. This tropical approach shares some ideas with the recent algorithm by Malajovich. However, our algorithm has several advantages. It is  memoryless,
parallelisable as a tree traversal, exact and relies on symbolic perturbations. Our computational experiments show that the method is competitive and especially fast on the Katsura class of examples.
\end{abstract}
\section{Introduction}
In \emph{Numerical Algebraic Geometry} the main objective is to find solution components of systems of polynomial equations over the real or complex numbers by combining numerical methods with algebraic geometry arguments. The connection to polyhedral geometry is established via the BKK Theorem~\cite{bernstein} and its algorithms~\cite{Verschelde94,hubersturmfels}. The language of \emph{Tropical Geometry} conveniently allows us to express the connection: ``Given Newton polytopes, the number of intersection points of $n$ general hypersurfaces in $(\CC^*)^n$ equals the number of intersection points of $n$ generic tropical hypersurfaces in $\R^n$''. For this reason we wish to study and solve generic, square tropical systems of polynomial equations.

Similar to numerical homotopy methods, our approach will be to investigate how the solutions to a system of polynomial equations change as the coefficients change continuously. We illustrate the idea by an example. The tropical system
\begin{align}
\begin{split}(0)\odot y^{\odot 2}\oplus \textup{\framebox{(-2)}}\odot x\odot y\oplus (0)\odot x\oplus (0)\\
(-4)\odot x\odot y \oplus (-8)\odot x^{\dot 2} \oplus (-3)\odot y \oplus (0)
\end{split}
\label{eq:basicexample}
\end{align}
has the intersection of two tropical hypersurfaces in $\R^2$ as solution set (see Figure~\ref{fig:basicexample}). Here $\odot$ denotes sum and $\oplus$ maximum. By convention, for example $(x,y)=(8/3,4/3)$ is a solution to the system above because \emph{each tropical polynomial attains its maximum at at least two of its terms}. As we change the boxed coefficient to $-1$, the tropical hypersurface and intersection points change as illustrated in Figure~\ref{fig:basicexample2}
\begin{figure}
\begin{center}
\includegraphics[scale=0.57]{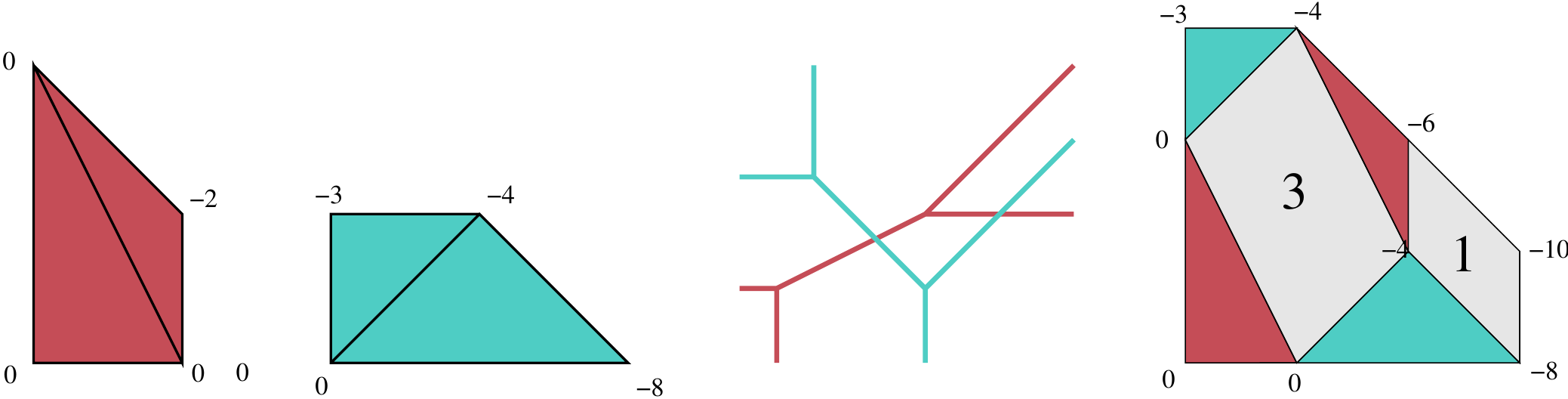}
\end{center}
\caption{The Newton polytopes of the tropical polynomials in Equation~\ref{eq:basicexample} (left). Their tropical hypersurfaces (center). The dual mixed subdivision (right).}
\label{fig:basicexample}
\end{figure}
\begin{figure}
\begin{center}
\includegraphics[scale=0.57]{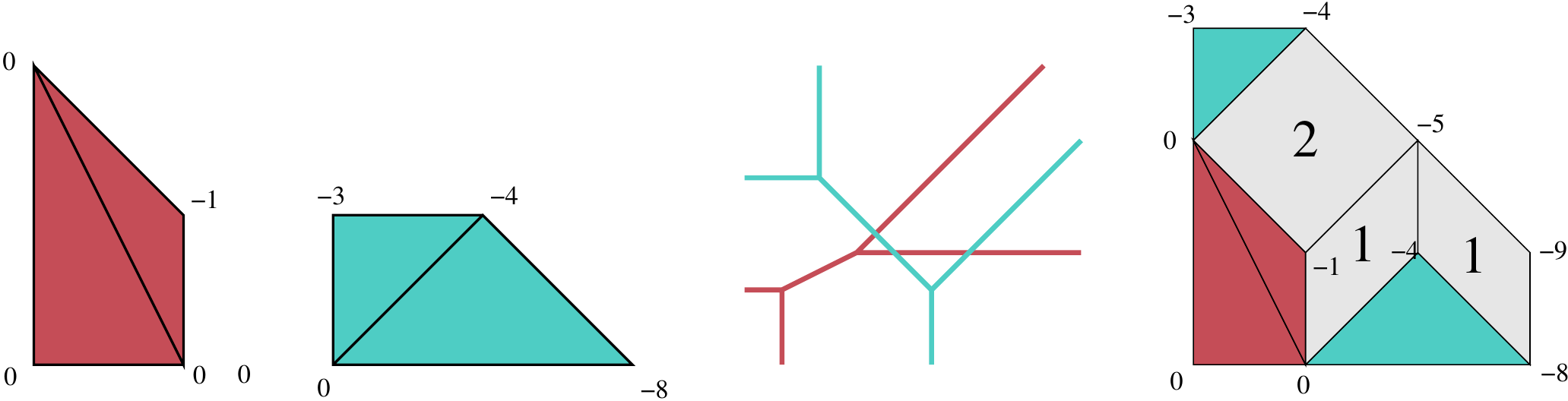}
\end{center}
\caption{The same situation as in Figure~\ref{fig:basicexample}, but with one coefficient changed.}
\label{fig:basicexample2}
\end{figure}
--- the intersection point splits into two. We propose a \emph{tropical homotopy method} which keeps track of how this happens as the coefficients vary. In particular we are interested in what happens when coefficients go to $-\infty$, as this will allow us to ``break off'' pieces of the polytopes and compute mixed volumes and mixed cells of any set of $n$ polytopes in $\R^n$.

Computing mixed volume is hard because the \#P-hard problem of computing volume of polytopes~\cite{Dyer} reduces to it~\cite{dyergritzmannhufnagel}. Nevertheless, in numerical algebraic geometry the practical problem of computing mixed volume has received much attention~\cite{DBLP:journals/dcg/VerscheldeGC96,li,Mizutani2007,leeli,li2014,malajovich}, as finding the mixed cells allows the set up of a polyhedral homotopy~\cite{hubersturmfels} with only mixed volume number of paths.

This article is structured as follows. After giving the relevant background, we study mixed cells and their cones (Section~\ref{sec:mixedcellcones}) and behaviour under bistellar flips (Section~\ref{sec:bistellar}). This leads to the tropical homotopy method (Algorithm~\ref{alg:homotopy}). We explain how it is made exact using symbolic perturbations and parallelised via reverse search \cite{af-rse-96}. In Section~\ref{sec:gensolving} and~\ref{sec:nongensolving} we show, respectively, how generic and non-generic systems are solved. In Section~\ref{sec:malajovich} we compare our approach to that of~\cite{malajovich}. Finally, we report on the implementation
and suggest future directions.

\vspace{0.5cm}
\noindent
{\bf Acknowledgements:}
The author thanks Bjarne Knudsen for providing an easy to use abstract parallel tree traversal library and Anton Leykin for many discussions about mixed volume computation and polynomial system solving. 
Inspiration also comes from earlier work with Josephine Yu on computation of tropical resultants via fan traversals.
This present work was supported by the Danish Council for Independent Research, Natural Sciences (FNU) and is now part of a project that has received funding from the European Union’s Horizon 2020 research and innovation programme under grant agreement No 676541.

\section{A numerical algebraic geometry background}
\label{sec:numericalbackground}
The main idea of this article is to tropicalise algorithms from numerical algebraic geometry (NAG). Here we present
\emph{only} the 
NAG terminology required to understand the tropical analogue and refer to~\cite{Sommese-Wampler-book-05} for a general introduction.

\begin{example}
\label{ex:sage1}
Suppose we wish to approximate the roots of the polynomial
$$f=x^3-2x^2-3x+5.$$ 
We choose a polynomial whose roots we know by construction, for illustration,
$$g=(x-1)(x-2)(x-3)$$
and set up a family of systems:
$$H(t,x)=(1-t)(x^3-2x^2-3x+5)+t(x-1)(x-2)(x-3).$$
Using Newton's method, the solutions can be tracked from $t=1$ to $t=0$ (see Figure~\ref{fig:sage1}). In general such strategy can find the \emph{isolated} solutions of \emph{square} systems, i.e.~systems $f_1=\cdots =f_n=0$ with an equal number of equations and unknowns.  The set of polynomials $\{f_1,\dots,f_n\}$ is called the \emph{target system}, $\{g_1,\dots,g_n\}$ the \emph{start system}, and $H\in\CC[t,x_1,\dots,x_n]^n$ the \emph{homotopy family}. 

\begin{figure}[b]
\begin{center}
\includegraphics[scale=0.45]{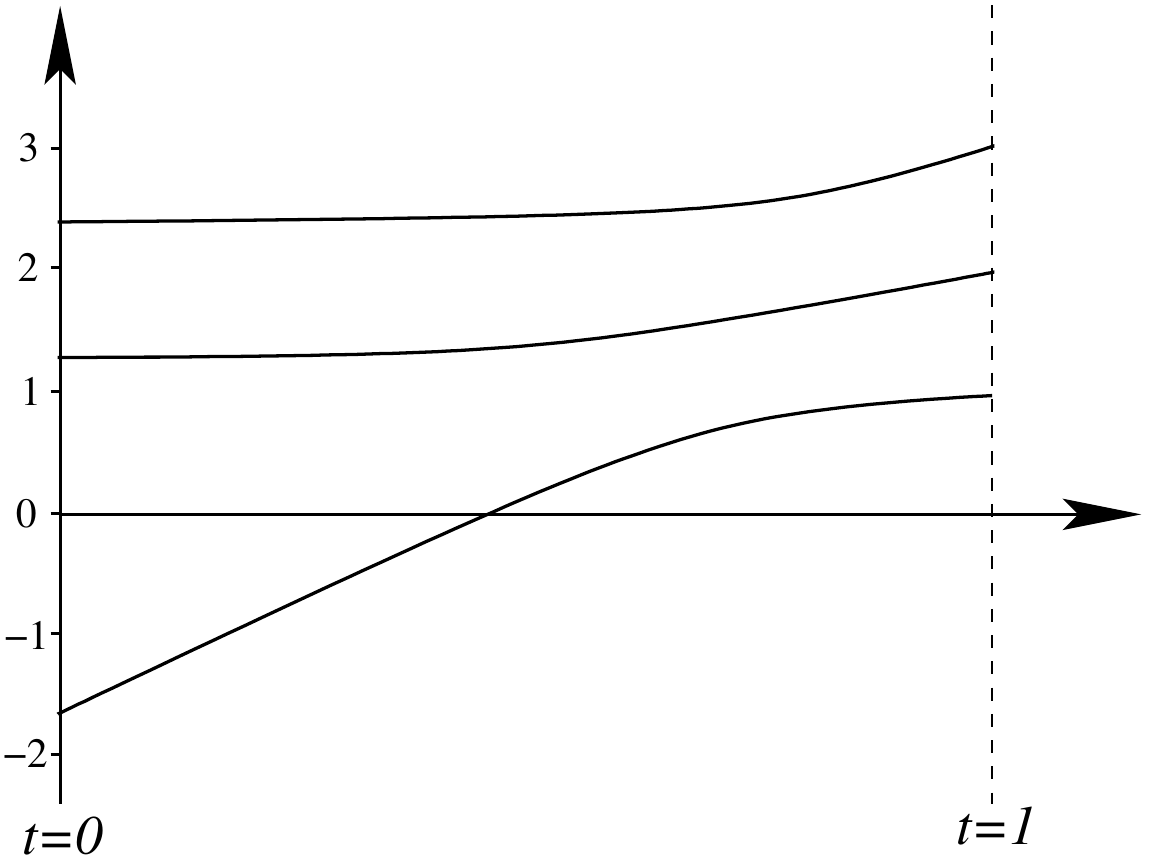}
\end{center}
\caption{The solutions in $[0,1]\times\CC$ to the system $H(t,x)=0$ in Example~\ref{ex:sage1}. Considering $t$ as a parameter, for each choice of $t$ we get three solutions. As $t$ moves from $1$ to $0$ these trace out three \emph{homotopy paths}.}
\label{fig:sage1}
\end{figure}
\end{example}

\begin{theorem}[Bezout]
For polynomials $f_1,\dots,f_n\in\CC[x_1,\dots,x_n]$ the number of solutions to $f_1=\cdots=f_n=0$ in $\CC^n$ is either infinite or bounded by $\textup{deg}(f_1)\cdots\textup{deg}(f_n)$.
\end{theorem}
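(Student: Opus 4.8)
The plan is to derive the bound by a deformation argument in the spirit of Example~\ref{ex:sage1}. Write $d_i=\deg(f_i)$ and $N=d_1\cdots d_n$, and assume the solution set $Z=V(f_1,\dots,f_n)\subseteq\CC^n$ is finite; we must show $|Z|\le N$. First I would build a \emph{start system} of the same degrees whose solutions are transparent: pick affine-linear forms $\ell_{i,k}$ ($1\le i\le n$, $1\le k\le d_i$) in general position and set $g_i=\prod_{k=1}^{d_i}\ell_{i,k}$. A solution of $g_1=\dots=g_n=0$ is obtained by choosing one factor from each $g_i$ and intersecting the $n$ resulting hyperplanes; general position makes every choice $(k_1,\dots,k_n)$ yield exactly one point, all of them distinct and nondegenerate, so the start system has exactly $N$ simple solutions.

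Next I would introduce the homotopy family $H(t,x)=(1-t)f(x)+\gamma\,t\,g(x)$ with a generic $\gamma\in\CC^\ast$ (the ``gamma trick''), and study the incidence set $\mathcal{H}=\{(t,x):H(t,x)=0\}\subseteq\CC_t\times\CC^n$. Being cut out by $n$ equations in $\CC^{n+1}$, every irreducible component of $\mathcal{H}$ has dimension at least $1$; a component dominating the $t$-line must then be a curve, since a component of dimension $\ge 2$ would force an infinite fibre over $t=1$, contradicting the previous paragraph. Genericity of $\gamma$ arranges that these solution curves are smooth and pairwise disjoint for $t\in(0,1)$ and that the fibre over every such $t$ has exactly $N$ nonsingular points. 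Following a solution along such a curve as $t\to 0^+$, each point of $Z$ appears as a limit, distinct points of $Z$ come from distinct curves near $t=0$, and so $|Z|$ is bounded by the number of $t$-dominating curves, which is $N$.

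The step I expect to be the main obstacle is making ``each point of $Z$ is the limit of a solution curve as $t\to 0$'' precise, i.e.\ controlling branches that run off to infinity (or collide) before $t$ reaches $0$, and simultaneously ruling out spurious branches of $\mathcal{H}$ appearing over $t=0$. The honest way to handle this is to compactify: homogenise, working in $\mathbb{P}^n$ instead of $\CC^n$ and in $\mathbb{P}^1$ instead of $\CC_t$, so that the projection to the $t$-coordinate is proper. Then the closure of each $t$-dominating curve surjects onto $\mathbb{P}^1$; counting the fibre over $t=1$ — which, for generic data, still consists of precisely the $N$ start points — bounds the number of such curves by $N$, and hence the fibre over $t=0$, which contains $Z$ (possibly together with points at infinity), has at most $N$ points. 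Alternatively one can avoid homotopy altogether and invoke the projective Bézout inequality from intersection theory: after homogenising (and, if needed, perturbing the leading forms generically so that there are no common zeros at infinity) one embeds $Z$ into a zero-dimensional intersection of $n$ hypersurfaces of degrees $d_i$ in $\mathbb{P}^n$, and Krull's height theorem together with the estimate $\deg(X\cap V(F))\le\deg(X)\deg(F)$ for a hypersurface $V(F)$ meeting a variety $X$ properly yields $|Z|\le d_1\cdots d_n$ by induction on the number of cuts.
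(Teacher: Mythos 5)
The paper does not prove this theorem --- it is stated as classical background (immediately after Example~\ref{ex:sage1}, motivating the total degree homotopy), so there is no proof of the paper's to compare against. Your sketch is therefore judged on its own terms.

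Your first route (product-of-linear-forms start system, gamma-trick homotopy, compactify in $\mathbb{P}^n\times\mathbb{P}^1$) is the standard homotopy-theoretic proof and it is essentially sound, but a couple of the steps are asserted rather than argued and would need tightening. The claim that a $t$-dominating component of dimension $\geq 2$ forces an infinite fibre over $t=1$ uses upper semicontinuity of fibre dimension: over a generic $t$ such a component has fibre dimension $\geq 1$, and semicontinuity lets you push this up to the special fibre $t=1$; as stated it reads as if the generic fibre were the $t=1$ fibre. More importantly, the assertion ``each point of $Z$ appears as a limit'' is where the real work is, and the right way to discharge it in the compactified picture is Krull's height theorem again: in $\overline{\mathcal{H}}\subseteq\mathbb{P}^n\times\mathbb{P}^1$ every component has dimension $\geq 1$, so $(0,p)$ is not isolated in $\overline{\mathcal{H}}$; but since $p$ is an isolated point of the $t=0$ fibre (by finiteness of $Z$), any component through $(0,p)$ must leave $t=0$, hence dominates $\mathbb{P}^1$, and properness then gives a path into $t=1$. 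This step is what actually rules out the ``spurious branches over $t=0$'' you worry about, and it is worth writing down explicitly. Your second route (projective B\'ezout via Krull plus the degree inequality for proper hypersurface sections) is also standard, but ``perturbing the leading forms generically'' changes the system, so one must still argue that the finitely many affine points of $Z$ persist --- which is itself a small deformation argument, so this alternative doesn't really avoid the continuity step.

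Neither route is what the paper uses (the paper uses none), but both are legitimate, and the homotopy-continuation proof is thematically apt here since it tropicalises directly into Algorithms~\ref{alg:homotopy} and~\ref{alg:totaldegreehomotopy}.
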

Bezout's theorem leads to a particular strategy for setting up a homotopy:
\begin{definition}
Let $f_1,\dots,f_n\in\CC[x_1,\dots,x_n]$ with $\textup{deg}(f_i)=d_i$. We define the \emph{total degree homotopy family} by
$$h_i=(1-t)\cdot f_i+(t)\cdot(x_i^{d_i}-1).$$
\end{definition}
When $t=1$ we easily solve the system $h_1(x)=\cdots=h_n(x)=0$. As we let $t$ go from $1$ to $0$, the solutions of the start system change into those of the target system. In the complex plane we prefer to walk on a generic path from $t=1$ to $t=0$, as a straight line path could cause homotopy paths to  collide.

Better upper bounds for the number of solutions in the complex algebraic torus (and the number of paths to track) are obtained via mixed volumes.
\begin{definition}
Let $C_1,C_2,\dots,C_n\subseteq\R^n$ be bounded convex sets.
The function
$$f:\R_{\geq 0}^n\rightarrow\R$$
$$(\lambda_1,\dots,\lambda_n)\mapsto \textup{Volume}(\lambda_1C_1+\dots+\lambda_nC_n)$$
is polynomial in the variables $\lambda_1,\dots,\lambda_n$.
The coefficient of $\lambda_1\cdots\lambda_n$ is called the \emph{mixed volume} of $C_1,\dots,C_n$ and is denoted by $\textup{MixVol}(C_1,\dots,C_n)$.
\end{definition}

The \emph{support} $\textup{supp}(f)$ of a polynomial $f=\sum_{u}c_ux^u\in k[x_1,\dots,x_n]$ is $\{u:c_u\not=0\}$, while the \emph{Newton polytope} of $f$ is the convex hull $\textup{conv}(\textup{supp}(f))$.

\begin{theorem}[Bernstein, Khovanskii, Kushnirenko]
\label{thm:bernstein}
For $f_1,f_2,\dots,f_n\in\CC[x_1,\dots,x_n]$ the number of isolated solutions to $f_1=\cdots=f_n=0$ in $(\CC^*)^n$ is (counting multiplicities) bounded above by $\textup{MixVol}(\textup{New}(f_1),\dots,\textup{New}(f_n))$.
\end{theorem}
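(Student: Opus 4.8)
The plan is to prove the BKK bound (Theorem~\ref{thm:bernstein}) by a deformation-to-the-torus argument, exactly the kind of homotopy idea the paper is built on. First I would reduce to the generic case: for a fixed tuple of Newton polytopes $P_i = \textup{New}(f_i)$, consider the family of all polynomial systems $(g_1,\dots,g_n)$ with $\textup{supp}(g_i)\subseteq P_i\cap\Z^n$, parametrised by the coefficient vectors $c = (c_{i,u})$. The number of isolated solutions in $(\CC^*)^n$ is upper semicontinuous in a suitable sense, and there is a Zariski-dense open set $U$ of coefficient vectors on which this number attains its generic value $N$ and on which all solutions are nondegenerate (the Jacobian is invertible). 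It suffices to show $N \le \textup{MixVol}(P_1,\dots,P_n)$ for one generic system, and then to observe that specialising $c$ to $f$ can only merge or lose solutions (counted with multiplicity the bound is preserved; for the isolated-solution count the inequality is what we want).

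**Next** I would construct a polyhedral homotopy realising the mixed cells. Choose a generic lifting function $\omega_i : P_i\cap\Z^n \to \R$ for each $i$; these induce a fine mixed subdivision of the Minkowski sum $P_1+\dots+P_n$ whose mixed cells $C = C_1+\dots+C_n$ (with each $C_i$ an edge, i.e.\ $\dim C_i = 1$, coming from $P_i$) have total volume equal to $\textup{MixVol}(P_1,\dots,P_n)$ — this is the Minkowski-weight / mixed-subdivision identity that underlies Huber--Sturmfels~\cite{hubersturmfels}. Form the one-parameter family $g_{i,t}(x) = \sum_{u\in P_i} c_{i,u}\, t^{\omega_i(u)} x^u$ for a new parameter $t$. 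For $t$ near $0$, the Newton-polygon-in-$t$ analysis (Puiseux series / the tropical picture in Figure~\ref{fig:basicexample}) shows that each branch of solutions $x(t)$ has coordinatewise valuations determined by an inner normal of a mixed cell, and that over each mixed cell $C$ the leading-order system is a binomial system $\prod$ of the form $c_{i,u}x^u = c_{i,u'}x^{u'}$ whose number of solutions in $(\CC^*)^n$ is exactly the normalised lattice volume of $C$ — this is the standard count of solutions of a square binomial system via the Smith normal form of the exponent matrix.

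**Then** I would assemble the count: the number of isolated solutions of $g_t$ in $(\CC^*)^n$ for small generic $t$ is at least the sum over mixed cells of these binomial-system solution counts, which is $\sum_C \textup{Vol}(C) = \textup{MixVol}(P_1,\dots,P_n)$; and it is at most this, because every solution branch as $t\to 0$ must select some mixed cell as its lowest-order behaviour (a non-mixed cell would force a positive-dimensional leading system, contradicting genericity of $\omega$ and isolatedness). Hence the generic count $N$ equals $\textup{MixVol}(P_1,\dots,P_n)$. Finally, specialising the coefficients $t^{\omega_i(u)}c_{i,u}$ to the target values defining $f_1,\dots,f_n$: isolated solutions can collide or escape to the boundary of the torus, so the number of isolated solutions of $f_1=\dots=f_n=0$ in $(\CC^*)^n$, counted with multiplicity, is at most $N$, giving the stated bound.

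**The hard part will be** the two genericity arguments and making them rigorous without circularity: (i) establishing that the generic solution count is well-defined and realised on a dense open set (a constructibility / upper-semicontinuity statement about the number of solutions as the coefficients vary, which one typically gets from elimination theory or from a resultant computation), and (ii) controlling the limit behaviour as $t\to 0$ — i.e.\ proving that solution branches extend to Puiseux series, that their valuations land on the tropical variety, and that exactly the mixed cells contribute, with the right multiplicity. The binomial-system count and the mixed-subdivision volume identity are essentially bookkeeping once the framework is set up, so the genuine obstacle is the analytic/algebraic control of the homotopy as $t\to 0$ together with the semicontinuity needed to pass from the generic $g$ back to the given $f$.
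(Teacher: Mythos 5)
The paper does not prove Theorem~\ref{thm:bernstein}; it is recorded in Section~\ref{sec:numericalbackground} as classical background, cited to~\cite{bernstein} and~\cite{hubersturmfels}, and the rest of the paper simply uses it. So there is no proof in the paper to compare yours against. That said, your sketch is a faithful outline of one standard route to BKK, namely the polyhedral-homotopy argument of Huber and Sturmfels: generic coefficients plus a generic lift induce a fine mixed subdivision, branches of $g_t$ as $t\to 0$ have valuations selecting cells of that subdivision, each fully mixed cell contributes a binomial leading system whose torus solution count is the normalised lattice volume (Smith normal form), and the mixed-cell volumes sum to $\textup{MixVol}$. Bernstein's original argument is a bit different in flavour (direct Puiseux-series analysis of the facial subsystems plus an induction), but morally parallel.

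Two places where your write-up is looser than it can afford to be. First, ``the number of isolated solutions in $(\CC^*)^n$ is upper semicontinuous'' is not literally true as a function of the coefficient vector: isolated points can appear when a positive-dimensional component degenerates, so that count can jump up. The statement that actually carries the argument is local conservation of number: for each isolated solution $p$ of the target system there is a small ball $B\ni p$ such that every nearby system has, counted with multiplicity, at least $\textup{mult}(p)$ solutions in $B$; summing over the isolated $p$ and comparing with the generic total gives the bound. You gesture at this but phrase it the wrong way around. Second, in the $t\to 0$ analysis you should say explicitly why only fully mixed cells contribute: if a full-dimensional cell of the fine mixed subdivision has some summand $C_i$ a single vertex, then the $i$th leading-order equation is a monomial, which has no solutions in $(\CC^*)^n$. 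That is precisely the step that ties the enumeration to \emph{mixed} cells rather than to all cells of the subdivision. With those two points tightened, your outline is a correct proof strategy.
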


The polyhedral homotopy~\cite{hubersturmfels} is a realisation of Theorem~\ref{thm:bernstein}. We pick for each $u\in\textup{supp}(f_i)$ a lift $\omega_{iu}\in\R$. See Figure~\ref{fig:basicexample} (left).
Let $\widetilde{{\textup{New}(f_i)}}\subseteq \R^{n+1}$ denote the convex hull of the lifted points of $\textup{supp}(f_i)$. Viewed from above these polytopes look as in Figure~\ref{fig:basicexample} (left), while their Minkowski sum $\widetilde{{\textup{New}(f_1)}}+\cdots+\widetilde{{\textup{New}(f_n)}}$ is shown on the right as a subdivision of $\textup{New}(f_1)+\textup{New}(f_2)$. In Definition~\ref{def:mixedcell} we define the mixed cells of such a subdivision. In~\cite{hubersturmfels} mixed cells are used to find $\textup{MixVol}(\textup{New}(f_1),\dots,\textup{New}(f_n))$ start solutions near $t=0$ for the homotopy
$$h_i(t,x)=\sum_{u\in\textup{supp}(f_i)} c_{iu}t^{\omega_{iu}}x^{u}$$ 
where $t$ goes from near $0$ to $1$. This illustrates the importance of mixed cells.

In~\cite{Hauenstein:2011:RHS} the idea of \emph{regeneration} for solving systems of polynomial equations was proposed. One advantage is that no mixed cell computation is required. Another is that equations can be introduced one by one and that ``non-solution paths'' may be detected early, while only few equations are under consideration. Basically, the idea is to solve a generic linear system $l_1=\cdots=l_n=0$. After this $l_1$ is substituted by $\textup{deg}(f_1)$ random linear forms $l_{11},\dots,l_{1\textup{deg}(f_1)}$, one at a time, and the solutions of the $n$ new systems are found by making homotopy deformations from the original linear system. Now, the solutions to $l_{11}\cdot l_{12}\cdots l_{1\textup{deg}(f_1)}=l_2=\dots=l_n=0$ are known. From these the solutions of $f_1=l_2=\cdots=l_n=0$ are constructed by homotopy continuation. Successively replacing $l_2,\dots,l_n$ by $f_2,\dots,f_n$ in this way, the original system is solved.

In the following sections we tropicalise the ideas mentioned above. An interesting difference is that while genericity of the homotopy paths for numerical methods ensures that there are no collisions, this will not be the case tropically.
\section{A tropical geometry background}
\label{sec:tropicalbackground}
We will be interested in polyhedral and combinatorial aspects of tropical algebraic geometry and refer to~\cite{MaclaganSturmfels} for a general introduction. In particular we need basic definitions and results concerning tropical hypersurfaces, regular subdivisions, secondary fans and Cayley configurations. See also \cite[Section~9.2]{DLRS}.

A polynomial $f\in\R[x_1,\dots,x_n]$ can be evaluated tropically at points in $\R^n$ over the tropical semiring $(\R\cup\{-\infty\},\oplus,\odot)$ where $\oplus$ denotes maximum and $\odot$ sum. This gives a piece-wise linear function. The set of points where the maximum is attained by at least two terms of $f$ is called the \emph{tropical hypersurface} of $f$. For our purposes it will be convenient to represent $f$ by a matrix of its exponent vectors and a vector of coefficients. In Example~\ref{ex:basicexamplebbb} such representations of the polynomials in the introduction are given.
By the convex hull of a matrix, we mean the convex hull of its columns.
Our definitions go as follows.

\begin{definition}
For a matrix $A\in\Z^{n\times m}$ of exponent vectors and a coefficient vector $\omega\in\R^n$ we define
the \emph{polyhedral lift} $$L(A,\omega):=\textup{conv}_i(A_{1,i},\dots,A_{n,i},\omega_i)+\{0\}^n\times\R_{\leq 0}$$ and 
the \emph{normal complex} $$\Delta(A,w):=\pi(\textup{NF}(L(A,\omega))\wedge\{\R^n\times\{1\}\})$$ where $\textup{NF}(L(A,\omega))$ denotes the outer normal fan of $L(A,\omega)$, the wedge $\wedge$ the \emph{common refinement} (i.e. $F\wedge G:=\{f\cap g:(f,g)\in F\times G\}$) and $\pi:\R^{n+1}\rightarrow\R^n$ the projection leaving out the last coordinate.
The \emph{tropical hypersurface}
$$T(A,\omega):=\{x\in\R^n:\textup{max}_i(\omega_i+A_{1i}x_1+\cdots+A_{ni}x_n)\textup{ is attained at least twice}\}$$
is the support of a polyhedral subcomplex of the normal complex $\Delta(A,\omega)$. This subcomplex we also call the tropical hypersurface. The subdivision of the \emph{Newton polytope} $\textup{conv}_i(A_i)$ obtained by projecting the upper faces of $L(A,\omega)$ to $\R^n$ is called a \emph{regular subdivision} and is combinatorially dual to $\Delta(A,\omega)$.
\end{definition}

 For fixed $A$, each $\omega\in\R^m$ gives rise to a subdivision. A definition of the \emph{secondary fan} of $A$ is given in~\cite{DLRS}. Morally, it is the coarsest fan in $\R^m$ such that the subdivision is constant on the relative interior of each cone. However, triangulations with \emph{marked points} need to be considered for a precise definition.

For natural numbers $m_1,\dots,m_k$, $m:=\sum_im_i$, matrices $A_i\in\Z^{n\times m_i}$ and vectors $\omega_i\in\R^{m_i}$, we are interested in solving their \emph{tropical polynomial system} by which we mean finding the intersection $T(A_1,\omega_1)\cap\cdots\cap T(A_k,\omega_k)$ of tropical hypersurfaces. However, it is more convenient to study the common refinement
$$\Delta(A_1,\omega_1)\wedge\dots\wedge \Delta(A_k,\omega_k)$$
containing $T(A_1,\omega_1)\wedge\cdots \wedge T(A_k,\omega_k)$ as a subcomplex.

We will argue that $\Delta(A_1,\omega_1)\wedge\dots\wedge \Delta(A_k,\omega_k)$ depends only on the tropical hypersurface of the \emph{Cayley configuration} with coefficient vector $\omega_1\times\cdots\times\omega_k$.
\begin{definition}
For matrices $A_1,\dots,A_k$ with $A_i\in\Z^{n\times m_i}$ we define the $(n+d)\times(\sum_im_i)$ Cayley matrix
$$\textup{Cayley}(A_1,\dots,A_k):=\left(\begin{array}{cccc}
A_1 & A_2 & \cdots & A_k \\
1\dots 1 & 0\dots 0 & \cdots & 0\dots 0 \\
0\dots 0 & 1\dots 1 & \cdots & 0\dots 0 \\
\vdots  & \vdots & \ddots & 0\dots 0 \\
0\dots 0 & 0\dots 0 & \cdots & 1\dots 1  \end{array}\right).
$$
\end{definition}
\begin{lemma}
Let $A_1,\dots,A_k$ with $A_i\in\Z^{n\times m_i}$. Then
$${1\over k}\sum_i\textup{conv}(A_i)=\pi(\textup{conv}(\textup{Cayley}(A_1,\dots,A_k))\cap \R^n\times ({1\over k},\dots,{1\over k}))$$
where the sum is Minkowski sum and $\pi$ projects away the last $k$ coordinates.
\end{lemma}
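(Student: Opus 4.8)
The plan is to unwind both sides of the claimed identity as explicit sets of convex combinations and match them up. Write $(A_j)_\ell\in\Z^n$ for the $\ell$-th column of $A_j$, and let $v_{j,\ell}:=((A_j)_\ell,e_j)\in\Z^{n+k}$ be the corresponding column of $\textup{Cayley}(A_1,\dots,A_k)$, where $e_j$ denotes the $j$-th standard basis vector of $\R^k$. A point of $\textup{conv}(\textup{Cayley}(A_1,\dots,A_k))$ is then a combination $\sum_{j,\ell}\lambda_{j,\ell}v_{j,\ell}$ with $\lambda_{j,\ell}\geq 0$ and $\sum_{j,\ell}\lambda_{j,\ell}=1$, and its last $k$ coordinates are exactly $\bigl(\sum_\ell\lambda_{1,\ell},\dots,\sum_\ell\lambda_{k,\ell}\bigr)$, because the $\R^k$-part of $v_{j,\ell}$ is the block indicator $e_j$.

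Next I would intersect with the affine slice $\R^n\times\{(\tfrac1k,\dots,\tfrac1k)\}$. By the previous remark this is exactly the condition $\sum_\ell\lambda_{j,\ell}=\tfrac1k$ for every $j$, which is consistent with $\sum_{j,\ell}\lambda_{j,\ell}=1$. Rescaling, put $\mu_{j,\ell}:=k\lambda_{j,\ell}$, so that $(\mu_{j,1},\dots,\mu_{j,m_j})$ is a probability vector for each fixed $j$. Applying $\pi$ (projection away from the last $k$ coordinates) then gives $\sum_{j,\ell}\lambda_{j,\ell}(A_j)_\ell=\tfrac1k\sum_j\bigl(\sum_\ell\mu_{j,\ell}(A_j)_\ell\bigr)$. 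As the $\mu_{j,\ell}$ range over all tuples of probability vectors, $\sum_\ell\mu_{j,\ell}(A_j)_\ell$ ranges independently over $\textup{conv}(A_j)$ for each $j$, so the set of all such images is precisely $\tfrac1k\sum_j\textup{conv}(A_j)$.

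The computation above already yields both inclusions, but I would spell them out. For ``$\subseteq$'': every point of the sliced polytope projects, by the displayed identity, into $\tfrac1k\sum_j\textup{conv}(A_j)$. For ``$\supseteq$'': given $c_j\in\textup{conv}(A_j)$, write $c_j=\sum_\ell\mu_{j,\ell}(A_j)_\ell$ with $(\mu_{j,\ell})_\ell$ a probability vector, and set $\lambda_{j,\ell}:=\tfrac1k\mu_{j,\ell}$; then $\sum_{j,\ell}\lambda_{j,\ell}v_{j,\ell}$ lies in $\textup{conv}(\textup{Cayley}(A_1,\dots,A_k))$, has last $k$ coordinates $(\tfrac1k,\dots,\tfrac1k)$, and projects under $\pi$ to $\tfrac1k\sum_j c_j$. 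I do not expect a genuine obstacle here; the only step needing a word of care is that slicing and then rescaling the weights block-by-block really does decouple the Minkowski summands, which holds precisely because the bottom $k$ rows of the Cayley matrix are the block-indicator rows.
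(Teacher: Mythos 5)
Your proof is correct. The paper explicitly omits the proof of this lemma (``We leave out the proof of the lemma''), so there is no argument to compare against; your direct computation with convex-combination weights, slicing to force $\sum_\ell\lambda_{j,\ell}=\tfrac1k$ for each block, and rescaling to $\mu_{j,\ell}=k\lambda_{j,\ell}$ is the natural and complete argument, with both inclusions handled cleanly. The key observation you identify --- that the block-indicator rows of the Cayley matrix decouple the weights into independent probability vectors, one per summand --- is exactly what makes the identity hold.
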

We leave out the proof of the lemma, but observe along the same lines that
$${1\over k}(L(A_1,\omega_1)+\cdots+L(A_k,\omega_k))=$$
$$\pi(\textup{conv}(\textup{Cayley}({A_i\choose w^t_1},\dots,{A_k\choose w^t_k}))\cap\R^{n+1}\times({1\over k},\dots,{1\over k}))+\{0\}^n\times\R_{\leq 0}=$$
$$\pi((\textup{conv}(\textup{Cayley}({A_i\choose w^t_1},\dots,{A_k\choose w^t_k}))+\{0\}^n\times\R_{\leq 0}\times\{0\}^k)\cap\R^{n+1}\times({1\over k},\dots,{1\over k}))$$
for $\pi$ leaving out the last $k$ coordinates, and therefore the common refinement
$$\bigwedge_i\Delta(A_i,w_i)\times\{1\}=\bigwedge_i \textup{NF}(L(A_i,\omega_i))\wedge\R^n\times\{1\}=\textup{NF}(\sum_iL(A_i,\omega_i))\wedge\R^n\times\{1\}$$
only depends on 
$$\textup{conv}(\textup{Cayley}({A_i\choose w^t_1},\dots,{A_k\choose w^t_k}))+\{0\}^n\times\R_{\leq 0}\times\{0\}^k$$
which is determined by $\Delta(\textup{Cayley}(A_1,\dots,A_k),\omega_1\times \cdots \times \omega_k)$.

Projecting the upper faces of $L(A,\omega)$ gives a subdivision of the convex hull of the columns of $A$. In the case of a refinement $\Delta(A_1,\omega_1)\wedge\cdots\wedge\Delta(A_k,\omega_k)$, projecting the upper faces of $L(\textup{Cayley}(A_1,\dots,A_k),\omega_1\times \cdots \times \omega_k)$ gives a subdivision of the Cayley configuration or, by intersecting with $\R^n\times\{({1\over k},\dots,{1\over k})\}$, projecting and scaling, a mixed subdivision of $\sum_i\textup{conv}(A_i)$. Cells in the mixed subdivision of $\sum_i\textup{conv}(A_i)$ arising from cells of $\textup{Cayley}(A_1,\dots,A_k)$ containing at least two columns from each $A_i$ are called \emph{fully mixed}, or just \emph{mixed} for short.

\begin{example}
\label{ex:basicexamplebbb}
Consider the matrices
$$A_1=\left(\begin{array}{cccc}
0&0&1&1\\
0&2&0&1  \end{array}\right) \textup{ and } A_2=\left(\begin{array}{cccc}
0&0&1&2\\
0&1&1&0  \end{array}\right).$$
Choosing $w_1=(0,0,0,-2)^t$ and $w_2=(0,-3,-4,-8)^t$ we get the two tropical hypersurfaces shown in the middle picture in Figure~\ref{fig:basicexample}. They are combinatorially dual to the regular subdivisions on the left, while their overlay is combinatorially dual to the mixed regular subdivision of $\textup{conv}(A_1)+\textup{conv}(A_2)$ shown on the right. The mixed subdivision has two mixed cells of area 3 and 1, respectively.
\end{example}

\section{Mixed cell cones}
\label{sec:mixedcellcones}
From now on we let $k=n$, consider a fixed tuple $\A=(A_1,\dots,A_n)$ and let $\omega\in\R^m=\R^{m_1}\times\cdots\times\R^{m_n}$ vary. For any particular choice of $\omega$, the overlay $\Delta(A_1,\omega_1)\wedge\dots\wedge \Delta(A_n,\omega_n)$ is dual to a mixed subdivision or, equivalently, a regular subdivision of $\textup{Cayley}(A)$. Therefore all possible combinatorial types of the overlay are obtained by considering all cones of the secondary fan~\cite{DLRS} of the Cayley configuration.
Ideally, we would like to do the equivalent of a Gr\"obner walk in this fan i.e. update the subdivision as $\omega$ is moved along a straight line, but because triangulations and secondary fans can be extremely large, we will consider each mixed cell independently, while having the secondary fan in mind.

It is a well known that for $\omega$ chosen generically, the induced mixed cells of $\sum_i\textup{conv}(A_i)$ have volume summing to $\textup{MixVol}(\textup{conv}(A_1),\dots,\textup{conv}(A_n))$, see \cite[Theorem~1.3.4]{DLRS}. As a corollary we get the tropical BKK theorem:
\begin{theorem}
Given a tuple $A$, for generic choices of $\omega\in\R^m$, the intersection of tropical hypersurfaces $T(A_1,\omega_1)\cap\cdots\cap T(A_n,\omega_n)$ is finite. Counted with multiplicity the intersection has cardinality $\textup{MixVol}(\textup{conv}(A_1),\dots,\textup{conv}(A_n))$.
\end{theorem}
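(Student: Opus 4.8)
The plan is to derive the statement directly from the combinatorial duality between $\bigwedge_i\Delta(A_i,\omega_i)$ and the mixed subdivision of $\textup{conv}(A_1)+\cdots+\textup{conv}(A_n)$ recalled above, combined with \cite[Theorem~1.3.4]{DLRS}; we may assume this Minkowski sum is full-dimensional, since otherwise $\textup{MixVol}(\textup{conv}(A_1),\dots,\textup{conv}(A_n))=0$ and, for generic $\omega$, the intersection is empty (each $T(A_i,\omega_i)$ is invariant under a common nonzero translation, so non-emptiness would force infinitude, which a small perturbation of $\omega$ rules out). First I would record the duality at the level of faces: a polyhedron $\sigma$ of $\bigwedge_i\Delta(A_i,\omega_i)$ is dual to a cell $C_1+\cdots+C_n$ of the mixed subdivision, where $C_i$ is a face of the regular subdivision of $\textup{conv}(A_i)$ induced by $\omega_i$, and $\dim\sigma+\dim(C_1+\cdots+C_n)=n$; if $x$ lies in the relative interior of $\sigma$, then for each $i$ the tropical polynomial with exponent matrix $A_i$ and coefficient vector $\omega_i$ attains its maximum at $x$ exactly at the columns of $A_i$ that are vertices of $C_i$. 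Hence $x\in T(A_i,\omega_i)$ if and only if $\dim C_i\ge 1$, so $x\in T(A_1,\omega_1)\cap\cdots\cap T(A_n,\omega_n)$ if and only if $\dim C_i\ge 1$ for every $i$.

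Next I would use genericity. For $\omega$ in the relative interior of a maximal cone of the secondary fan of $\textup{Cayley}(A)$ --- in particular for generic $\omega$ --- the induced regular subdivision of $\textup{Cayley}(A)$ is a triangulation, and each maximal cell is a simplex on $2n$ of the columns of the $2n\times m$ matrix $\textup{Cayley}(A)$. Because of the block structure of the bottom $n$ rows, such a simplex meets the central slice $\R^n\times\{(1/n,\dots,1/n)\}$ only if it uses a column from each block $A_i$, and it is fully mixed exactly when it uses exactly two columns from each block; in that case the corresponding mixed cell is $C_1+\cdots+C_n$ with every $C_i$ an edge and $\dim(C_1+\cdots+C_n)=n$, so its dual $\sigma$ is a single point. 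A maximal cell that is not fully mixed has some $C_i$ equal to a vertex, so by the first paragraph its dual lies outside $T(A_i,\omega_i)$. Thus $T(A_1,\omega_1)\cap\cdots\cap T(A_n,\omega_n)$ is precisely the set of points dual to the fully mixed cells of the mixed subdivision; since the subdivision has finitely many cells, the intersection is finite.

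Finally I would add up the multiplicities. At the intersection point dual to a mixed cell $C_1+\cdots+C_n$ with $C_i=\textup{conv}\{a_i,b_i\}$, the intersection multiplicity is $|\det(b_1-a_1\mid\cdots\mid b_n-a_n)|$, the lattice volume of that parallelepiped; this is the standard definition of tropical intersection multiplicity and it matches the classical multiplicity of Theorem~\ref{thm:bernstein} (see \cite{MaclaganSturmfels}), and it equals the volume of the mixed cell in the normalisation used for $\textup{MixVol}$. Summing over all fully mixed cells, the number of intersection points counted with multiplicity is the sum of the volumes of the mixed cells of the mixed subdivision of $\textup{conv}(A_1)+\cdots+\textup{conv}(A_n)$, which by \cite[Theorem~1.3.4]{DLRS} equals $\textup{MixVol}(\textup{conv}(A_1),\dots,\textup{conv}(A_n))$. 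I expect the main obstacle to be the bookkeeping in the first paragraph --- making the face-level duality and the equivalence $x\in T(A_i,\omega_i)\Leftrightarrow\dim C_i\ge 1$ precise, and fixing the volume normalisation so that the multiplicity of an isolated point coincides exactly with the lattice volume of its dual mixed cell; granting that, finiteness and the volume identity are immediate from the triangulation property and the cited theorem.
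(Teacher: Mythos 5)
Your argument reproduces the paper's intended route: the paper presents this theorem as a corollary of \cite[Theorem~1.3.4]{DLRS} together with the duality between intersection points of the tropical hypersurfaces and fully mixed cells of the mixed subdivision, with the multiplicity defined as the Euclidean volume of the dual mixed cell. You have simply spelled out the face-level duality and the genericity/triangulation argument that the paper leaves implicit, so this is essentially the same proof with the details filled in.
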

Here the multiplicity of an intersection point is the volume of its dual mixed cell in $\sum_i\textup{conv}(A_i)$. See~\cite[Theorem 4.6.8]{MaclaganSturmfels} for a different version of this theorem.

We now extend the notion of a mixed cells. For convenience we allow an index for a column of the Cayley matrix to also index the associated column of $A_i$. This will lead to no confusion.
\begin{definition}
\label{def:mixedcell}
Given $A$ and $\omega$, a tuple $((a_1,b_1),\dots,(a_n,b_n))$ of pairs of indices to columns of $\textup{Cayley}(A)$ is called a \emph{mixed cell} if:
\begin{itemize}
\item the square submatrix of $\textup{Cayley}(A)$ consisting of columns indexed by $a_1,b_1,\dots,a_n,b_n$ has full rank, and
\item the parallelepiped $$\sum_i\textup{conv}((A_{a_i},\omega_{a_i}),(A_{b_i},\omega_{b_i}))$$ is a facet of $L(A_1,\omega_1)+\cdots+L(A_n,\omega_n)$.
\end{itemize}
\end{definition}
Given $A$, we call a tuple $((a_1,b_1),\dots,(a_n,b_n))$ a \emph{mixed cell candidate} if it satisfies the first condition of Definition~\ref{def:mixedcell}.

\begin{definition}
Given a tuple $A$ of configurations and a mixed cell candidate $M=((a_1,b_1),\dots,(a_n,b_n))$, we define the \emph{mixed cell cone} as the set
$$\mcc_M:=\overline{\{\omega\in\R^n:((a_1,b_1),\dots,(a_n,b_n))\textup{ is a mixed cell for } (A,\omega)\}}$$
where the closure is taken in the Euclidean topology.
\end{definition}

\begin{lemma}
\label{lem:ineq}
The mixed cell cone $C$ of a candidate $M=((a_1,b_1),\dots,(a_n,b_n))$ is described by $\sum_i(m_i-2)$ irredundant linear inequalities. The vector of coefficients for each of these inequalities is a circuit of the Cayley matrix.
\end{lemma}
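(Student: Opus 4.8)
The plan is to begin by translating the facet condition of Definition~\ref{def:mixedcell} into the existence of a common outer normal. Since each $L(A_i,\omega_i)$ contains the downward ray $\{0\}^n\times\R_{\leq 0}$, any \emph{bounded} facet of $L(A_1,\omega_1)+\cdots+L(A_n,\omega_n)$ has an outer normal of the form $(x,1)$ with $x\in\R^n$, and taking the face of each summand in this direction shows that the parallelepiped $\sum_i\textup{conv}((A_{a_i},\omega_{a_i}),(A_{b_i},\omega_{b_i}))$ is such a facet exactly when, for every $i$,
$$\omega_{a_i}+\langle A_{a_i},x\rangle=\omega_{b_i}+\langle A_{b_i},x\rangle>\omega_c+\langle A_c,x\rangle\qquad\text{for every column }c\text{ of }A_i\text{ other than }a_i,b_i.$$
So $M$ is a mixed cell for $(A,\omega)$ iff there exists $x$ solving these $n$ linear equations together with these $\sum_i(m_i-2)$ strict linear inequalities.

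Next I would use the candidate (full rank) hypothesis. Replacing, for each $i$, the column $b_i$ of $\textup{Cayley}(A)$ by $b_i-a_i$ and expanding along the identity block supplied by the indicator rows shows that the $2n\times 2n$ submatrix on columns $a_1,b_1,\dots,a_n,b_n$ is invertible iff $A_{b_1}-A_{a_1},\dots,A_{b_n}-A_{a_n}$ are linearly independent in $\R^n$ (in particular $\textup{conv}_i(A_i)$ is then full dimensional, so $L(A_1,\omega_1)+\cdots+L(A_n,\omega_n)$ is $(n+1)$-dimensional and the parallelepiped is indeed a facet whenever the strict inequalities hold). Hence the $n$ equations above have a unique solution $x(\omega)$, linear in $\omega$. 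Substituting, the $(i,c)$-th strict inequality becomes $g_{a_i}(\omega)-g_c(\omega)>0$, where $g_d(\omega):=\omega_d+\langle A_d,x(\omega)\rangle$ is linear in $\omega$. Thus $\mcc_M$ is the closure of an intersection of open homogeneous half-spaces; once we note (below) that this open cone is nonempty, it follows that $\mcc_M$ is precisely the polyhedral cone cut out by the corresponding $\sum_i(m_i-2)$ non-strict inequalities.

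To identify the coefficient vectors as circuits, fix $(i,c)$ and let $v\in\R^m$ span the kernel of the corank-one submatrix of $\textup{Cayley}(A)$ on columns $a_1,b_1,\dots,a_n,b_n,c$; it is supported on these $2n+1$ columns and is (a scaling of) a circuit. Normalising $v_c=-1$, the indicator rows give $v_{a_j}+v_{b_j}=\delta_{ij}$ and the top $n$ rows give $A_c=\sum_j(v_{a_j}A_{a_j}+v_{b_j}A_{b_j})$. Feeding the latter relation into $g_c(\omega)=\omega_c+\langle A_c,x(\omega)\rangle$, using $\langle A_{a_j},x(\omega)\rangle=g_{a_j}(\omega)-\omega_{a_j}$ and $g_{a_j}(\omega)=g_{b_j}(\omega)$, the terms collapse via $v_{a_j}+v_{b_j}=\delta_{ij}$ and one obtains $g_{a_i}(\omega)-g_c(\omega)=\langle v,\omega\rangle$. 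So the $(i,c)$-th defining inequality of $\mcc_M$ is exactly $\langle v,\omega\rangle\ge0$, with coefficient vector the circuit. (In the degenerate case $A_c=A_{a_i}$ this circuit is just $e_{a_i}-e_{c}$, still a circuit since the two Cayley columns coincide.)

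The step I expect to be the main obstacle is \emph{irredundancy} (and the nonemptiness it also yields). For this I would, given $(i,c)$, pick a generic $x^\ast\in\R^n$ — so that the relevant values $\langle A_d,x^\ast\rangle$ are pairwise distinct — and then choose $\omega$ block by block: in block $i$ make $\omega_d+\langle A_d,x^\ast\rangle$ attain its maximum exactly at $\{a_i,b_i,c\}$, and in each other block $j$ exactly at $\{a_j,b_j\}$. Uniqueness of the normal forces $x(\omega)=x^\ast$, so this $\omega$ lies on the wall $\langle v^{(i,c)},\omega\rangle=0$ and strictly satisfies every other defining inequality, proving that none of the $\sum_i(m_i-2)$ inequalities is implied by the others; running the same construction with only $\{a_j,b_j\}$ dominant in every block realises $M$ as a genuine mixed cell and supplies the required nonemptiness. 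The only care needed is checking that the blockwise choice of $\omega$ is always possible (it is, since the heights are free per column) and, as noted, the trivial modification when some column of $A_i$ coincides as a vector with $a_i$, $b_i$ or $c$.
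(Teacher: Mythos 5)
Your proof is correct and follows essentially the same path as the paper's: translate the facet condition into, for each non-cell column $c$ of $A_i$, one linear condition on the lifts; then identify the coefficient vector as a circuit by observing that the $2n$-column submatrix on $a_1,b_1,\dots,a_n,b_n$ has trivial kernel, so appending $c$ gives a corank-one matrix whose kernel is spanned by a circuit. The one place you diverge is the irredundancy step: you build an explicit witness $\omega$ (block by block) lying on each wall while strictly satisfying the others, whereas the paper dispenses with this in one line by noting that $\omega_c$ appears with nonzero coefficient in exactly one of the inequalities, so no one of them can be implied by the rest; your construction is more work but also supplies the nonemptiness of the open cone, which the paper leaves implicit.
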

\begin{proof}
For a given lift $\omega\in\R^m$, the parallelepiped in question has a normal $p\in \R^n\times\R_{>0}$, unique up to scaling. By $\textup{face}_p(P)$ of a polytope $P$ we mean the face of $P$ where the maximum of the dot product with $p$ is attained. The candidate $((a_1,b_1),\dots,(a_n,b_n))$ is a mixed cell if and only if for all $i$:
$$\textup{face}_p(\textup{conv}_j((A_i)_{j},(\omega_i)_{j}))=\textup{conv}((A_{a_i},\omega_{a_i}),(A_{b_i},\omega_{b_i})).$$
Equivalently, if and only if all columns of $A_i$ not indexed by $a_i$ and $b_i$ are lifted lower than the hyperplane with normal $p$ passing through $(A_{a_i},\omega_{a_i})$. For each $i$ this is a sign condition on the determinant of the corresponding $(2n+1)\times(2n+1)$-submatrix of $\textup{Cayley}(\A)$ with $\omega$ appended as a row. Because any coordinate not mentioned in $((a_1,b_1),\dots,(a_n,b_n))$ appears with non-zero coefficient in exactly one inequality, each such inequality defines facets of $C$.

To see that each inequality comes from a circuit, recall that the square submatrix of $\textup{Cayley}(\A)$ indexed by $\{a_1,b_1,\dots,a_n,b_n\}$ has nullity $0$. Therefore, after appending one column it has nullity $1$. Hence all non-zero elements of the null space of the $2n\times(2n+1)$ matrix have the same support.
\end{proof}
\begin{observation}
\label{obs:circuitsign}
In the proof above, the inequality $\omega\cdot c\geq 0$ arising from considering index $\gamma$ in the complement of the mixed cell has $c_\gamma<0$ because low lifts of the $\gamma$th coordinate of $\omega$ are allowed in the mixed cell cone.
\end{observation}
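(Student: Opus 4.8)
The plan is to unwind the sign conventions already fixed in the proof of Lemma~\ref{lem:ineq} and track what "a low lift is allowed" means at the level of the determinant that defines the inequality. First I would recall the setup: for an index $\gamma$ in the complement of the mixed cell, say $\gamma$ belongs to block $i$, the mixed cell condition requires the lifted point $((A_i)_\gamma,(\omega_i)_\gamma)$ to lie strictly below the hyperplane through $(A_{a_i},\omega_{a_i})$ and $(A_{b_i},\omega_{b_i})$ with the (upward-pointing, i.e.\ positive last coordinate) normal $p$. This "below" condition is exactly a sign condition on the determinant of the $(2n+1)\times(2n+1)$ matrix obtained from the $2n\times(2n+1)$ submatrix of $\textup{Cayley}(\A)$ on the columns $\{a_1,b_1,\dots,a_n,b_n,\gamma\}$ by appending the corresponding entries of $\omega$ as a final row; call this determinant $D(\omega)$, which is an affine-linear function of $\omega$ whose linear part has coefficient vector the circuit $c$ supported on $\{a_1,b_1,\dots,a_n,b_n,\gamma\}$ (up to a global sign we are free to choose, since the circuit, hence the inequality, is only defined up to scaling).

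The key step is then purely a monotonicity observation. Expanding $D(\omega)$ along the last row (the $\omega$-row), the coefficient of $\omega_\gamma$ is, up to the sign chosen for $c$, exactly $c_\gamma$; all other $\omega$-entries occurring are the coordinates $\omega_{a_1},\omega_{b_1},\dots,\omega_{a_n},\omega_{b_n}$, which are held fixed. So as a function of $\omega_\gamma$ alone, $D$ is linear with slope $c_\gamma$. Now normalise the inequality so that the mixed cell cone is $\{\omega : \omega\cdot c \ge 0\}$; by construction $\omega\cdot c$ agrees with $D(\omega)$ up to a positive multiple. Decreasing $\omega_\gamma$ (lowering the lift of the $\gamma$th coordinate, all else fixed) keeps us in the mixed cell cone — indeed pushing the point further below the hyperplane only makes the cell "more" of a face, and this is precisely the direction of genericity used throughout the section. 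Hence decreasing $\omega_\gamma$ cannot decrease $\omega\cdot c$, i.e.\ $\partial(\omega\cdot c)/\partial\omega_\gamma \le 0$, and since $\gamma$ is in the support of the circuit this derivative is nonzero; therefore $c_\gamma<0$.

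The only genuine subtlety — and the one place I would be careful — is the bookkeeping of orientation: "below the hyperplane" has to be pinned down consistently with the choice of $p$ having positive last coordinate, and then with the sign of the circuit $c$ relative to the determinant $D$. Once those two sign choices are made coherent (which is forced: the inequality defining $C$ must contain, not exclude, the lifts where $\gamma$ sits low), the conclusion $c_\gamma<0$ is immediate from the linearity of $D$ in $\omega_\gamma$. I do not expect any computational obstacle beyond this; the content is entirely in matching the conventions of Lemma~\ref{lem:ineq} to the intuitive statement "low lifts of coordinate $\gamma$ are allowed."
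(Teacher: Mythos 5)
Your argument is correct and is in essence the same one-line reasoning the paper gives — the paper's ``proof'' of the observation consists entirely of the embedded justification ``because low lifts of the $\gamma$th coordinate of $\omega$ are allowed in the mixed cell cone,'' and you have unwound exactly that: the determinant $D(\omega)$ from the proof of Lemma~\ref{lem:ineq} is linear (in fact homogeneous, not merely affine, since $\omega$ occurs only in the appended row) with coefficient vector proportional to the circuit $c$, and since sending $\omega_\gamma\to-\infty$ must stay inside the cone $\{\omega:\omega\cdot c\ge 0\}$, the coefficient $c_\gamma$ must be nonpositive, hence negative as $\gamma\in\textup{supp}(c)$. This is the intended argument; your extra care about the orientation of $p$ and the normalization of $c$ is exactly the bookkeeping the paper leaves implicit.
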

\begin{example}
In Figure~\ref{fig:basicexample} a mixed cell of volume $3$ appears. Its cone is described by $(4-2)+(4-2)=4$ inequalities. Each of these can be obtained by considering $4\times 5$ submatrices $A$ of the Cayley configuration 

$$\left(\begin{array}
{c>{\columncolor{yellow!20}}c>{\columncolor{yellow!20}}c>{\columncolor{olive!30}}c>{\columncolor{yellow!20}}cc>{\columncolor{yellow!20}}cc}
0&0&1&1&0&0&1&2\\
0&2&0&1&0&1&1&0\\
1&1&1&1&0&0&0&0\\
0&0&0&0&1&1&1&1
  \end{array}\right) $$
involving the $4$ columns of the cell and one additional column. Each constraint is obtained as $\omega\cdot c \geq 0$ by choosing a non-zero $c\in\textup{NullSpace}(A)$ with the entry indexed by the additional column being negative.
For the $4$th column in the first configuration the inequality becomes $(0,1,2,-3,-1,0,1,0)\cdot \omega\geq 0$.
\end{example}

\section{Mixed cell behaviour under bistellar flips}
\label{sec:bistellar}

Suppose we are given a generic lift $\omega$ such that the mixed cells all arise from a mixed cell candidate. By Lemma~\ref{lem:ineq} the closure $C$ of vectors giving exactly these mixed cells is an intersection of polyhedral cones given by a certain set of linear inequalities. Note that it is possible that some of the inequalities obtained from Lemma~\ref{lem:ineq} are redundant for $C$. We now investigate what happens to the mixed cells as $\omega$ passes through the relative interior of a facet of $C$.

\begin{example}
Figure~\ref{fig:basicexample} (right) shows a mixed regular subdivision whose Cayley triangulation $A$ has $6$ maximal simplices. In Figure~\ref{fig:basicexample2} the corresponding triangulation $B$ has $7$ maximal simplices. When going from Figure~\ref{fig:basicexample} to Figure~\ref{fig:basicexample2} we pass through the hyperplane given by $(0,1,2,-3,-1,0,1,0)\cdot \omega= 0$. The Cayley subconfiguration indexed by the support of this equation contains $5$ vectors and has two regular triangulations (with indices referring to $\textup{Caley}(A_1,A_2)$):
\begin{itemize}
\item $A'=\{\{2,3,4,7\},\{2,3,5,7\}\}$ and
\item $B'=\{\{2,3,4,5\},\{2,4,5,7\},\{3,4,5,7\}\}$.
\end{itemize}
The rule for passing from $A$ to $B$ is:
\begin{equation}
\label{eq:rule}
 B=\{\sigma\in A:\not\exists\tau\in A':\tau\subseteq\sigma\}\cup\{\sigma\setminus\tau\cup\rho:\sigma\in A\wedge\rho\in B'\wedge A'\ni\tau\subseteq\sigma\}.
\end{equation}
\end{example}

\begin{lemma}
\label{lem:abprimedef}
Let $C$ be a mixed cell cone for a configuration $\A$, $c$ a circuit defining a facet $F$ of $C$, $\omega\in F$ a generic point (i.e. not in the codimension 2 skeleton of the secondary fan of $\textup{Cayley}(\A)$). Define $D=\textup{supp}(c)$, $A'$ as the $\omega+\varepsilon c$ induced regular triangulation of $\textup{Cayley}(\A)$ restricted to $D$ and $B'$ as the triangulation induced by $\omega-\varepsilon c$ for $\epsilon>0$ sufficiently small. Then
\begin{itemize}
\item $A'=\{D\setminus\{d\}:d\in D\wedge c_d<0\}$ and
\item $B'=\{D\setminus\{d\}:d\in D\wedge c_d>0\}$.
\end{itemize}
\end{lemma}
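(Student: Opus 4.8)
The plan is to recognise the situation as a single \emph{bistellar flip} (circuit flip) and to read off the two triangulations from the sign pattern of the circuit. First I would recall the basic setup: since $c$ is a circuit of $\textup{Cayley}(\A)$ with $D = \textup{supp}(c)$, the $|D|$ points indexed by $D$ are affinely dependent with a unique (up to scaling) affine relation given by the entries $c_d$, $d\in D$, and they are in \emph{affine general position} apart from this one relation — i.e.\ any proper subset of $D$ is affinely independent. Such a configuration has exactly two triangulations, the two ``sides'' of the circuit: writing $D^+ = \{d : c_d > 0\}$ and $D^- = \{d : c_d < 0\}$ (the Radon partition), one triangulation is $\{D\setminus\{d\} : d\in D^+\}$ and the other is $\{D\setminus\{d\}: d \in D^-\}$. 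This is the standard description of the two triangulations of a circuit; I would cite \cite{DLRS} (Section~2.4 / the discussion of circuits and bistellar flips) for it rather than reprove it.

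Next I would pin down which of these two triangulations corresponds to $\omega+\varepsilon c$ and which to $\omega - \varepsilon c$. The point $\omega$ lies on the facet $F$ of the mixed cell cone $C$, which by Lemma~\ref{lem:ineq} and Observation~\ref{obs:circuitsign} is cut out (locally, among the facet-defining inequalities) by $\omega\cdot c \geq 0$, with the convention that $c$ is normalised so that the coordinate $c_\gamma$ of the index $\gamma$ \emph{in the complement of the mixed cell} is negative. Moving to $\omega + \varepsilon c$ increases $\omega\cdot c$ by $\varepsilon\|c\|^2 > 0$, i.e.\ moves \emph{into} $C$; moving to $\omega - \varepsilon c$ moves out of $C$. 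The regular subdivision of the subconfiguration $D$ induced by a lift $\psi$ is the one whose maximal cells $D\setminus\{d\}$ are exactly those for which the point $d$ is lifted \emph{below} the affine hull of $\{\psi_e : e\in D\setminus\{d\}\}$; a short determinant computation (the same $(2n+1)\times(2n+1)$ determinant as in the proof of Lemma~\ref{lem:ineq}, now restricted to the columns of $D$ with the lift appended) shows that the sign of ``$d$ below the hull of the rest'' is governed by $\textup{sign}(\psi\cdot c)\cdot\textup{sign}(c_d)$. Hence for $\psi = \omega + \varepsilon c$, where $\psi\cdot c > 0$, the maximal cells are precisely $\{D\setminus\{d\} : c_d < 0\}$, giving $A'$; for $\psi = \omega - \varepsilon c$ we get $\{D\setminus\{d\}: c_d > 0\}$, giving $B'$.

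For the genericity bookkeeping I would invoke the hypothesis that $\omega$ is not in the codimension-$2$ skeleton of the secondary fan of $\textup{Cayley}(\A)$: this guarantees that $F$ lies in the relative interior of a single wall of that fan, so that crossing it changes the induced regular subdivision of $\textup{Cayley}(\A)$ by exactly one bistellar flip supported on $D$, and in particular that for small enough $\varepsilon$ the restrictions to $D$ of the $\omega\pm\varepsilon c$ subdivisions are genuine triangulations (all the relevant determinants are nonzero). This also ensures $\varepsilon$ can be chosen uniformly small enough that no other wall is crossed. The main obstacle, I expect, is not any single deep step but getting the sign conventions to line up consistently: tracking the normalisation of $c$ from Observation~\ref{obs:circuitsign}, the orientation convention in ``$d$ below the affine hull,'' and the direction ``$+\varepsilon c$ points into $C$'' all at once, so that $A'$ genuinely comes out on the $\{c_d<0\}$ side rather than the $\{c_d>0\}$ side. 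Once the determinant sign identity $\textup{sign}(\text{$d$ below hull in lift }\psi) = \textup{sign}(\psi\cdot c)\,\textup{sign}(c_d)$ is established, the rest is immediate.
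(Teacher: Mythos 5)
Your proof is correct and, at its core, the same as the paper's: both hinge on the single scalar $\psi\cdot c$ (the paper writes it as $l\cdot c$) and on the observation that the subconfiguration indexed by $D$ lies on a non-vertical hyperplane exactly when this scalar vanishes, so that the sign of $c_d$ relative to the sign of $\psi\cdot c$ decides which columns $d$ are ``lifted low.'' The only differences are stylistic: you frame the step as the standard DLRS description of the two triangulations of a circuit and verify the side by a sign identity, whereas the paper phrases the same computation as ``increasing $l_d$ eventually causes $l\cdot c=0$'' (a monotone-in-$l_d$ reading of the same linear expression, $\partial(l\cdot c)/\partial l_d = c_d$). One small imprecision in your wording: for $|D|<2n+1$ the matrix obtained by restricting to the columns of $D$ and appending the lift row is $(2n+1)\times|D|$, not square, so it isn't literally ``the same $(2n+1)\times(2n+1)$ determinant as in Lemma~\ref{lem:ineq}''; the correct statement, which you in fact use, is that $d$ is lifted below the hull of $D\setminus\{d\}$ iff $\textup{sign}(c_d)\cdot\textup{sign}(\psi\cdot c)=-1$, obtained by expressing the column of $d$ as the affine combination $p_d = -\tfrac{1}{c_d}\sum_{e\neq d}c_e p_e$ and comparing heights.
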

\begin{proof}
The condition on a lift $l$ to lift the vectors of the $2n\times |D|$ submatrix of $\textup{Cayley}(\A)$ to a non-vertical hyperplane is that the $(2n+1)\times |D|$ submatrix with the row $l$ appended does not have full column rank. Because $c$ is in the nullspace of the $2n\times |D|$ matrix, $l\cdot c=0$ is an equivalent condition.  We know that $l\cdot c>0$ for $l=\omega+\varepsilon c$ with $\varepsilon>0$. The condition for $D\setminus\{d\}$ to appear in $A'$ (that $d$th column is lifted low) becomes that increasing $l_d$ eventually causes an equality. Equivalently that $c_d<0$. A similar argument applies to $B'$.
\end{proof}
We note that the following degenerate situation is possible.
\begin{example}
\label{ex:degenerate}
We have the Cayley configuration
$$\left(\begin{array}{ccccc}
0&1&0&1&0\\
0&0&1&0&1\\
1&1&1&0&0\\
0&0&0&1&1  \end{array}\right)$$
and consider the mixed cell $\{1,3,4,5\}$ and $\gamma=2$. The circuit then becomes $(0,-1,1,1,-1)$ and 
$A'=\{\{2,3,4\},\{3,4,5\}\}$ and $B'=\{\{2,4,5\},\{2,3,5\}\}$.
\end{example}
The $2n+1-|D|$ \emph{missing} elements from $\textup{supp}(c)=D$ is possibly one from $\{a_i,b_i\}$ together with pairs of vectors from the same $A_j$.
We will call an element in $A'$ resp. $B'$ \emph{mixed}, if it together with the missed vectors does not index 3 columns from the same $A_j$. In Example~\ref{ex:degenerate} above the set of indices of the missed vectors is $\{1\}$. This means that $\{2,4,5\}$ and $\{3,4,5\}$ are the only mixed sets in $A'\cup B'$.
\begin{lemma}
\label{lem:mixflip}
$A'\cup B'$ has either 2 or 3 three mixed elements. Each of $A'$ and $B'$ has one or two mixed elements.
\end{lemma}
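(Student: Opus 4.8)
The plan is to count columns carefully, tracking which of the $2n+1$ indices of $D=\textup{supp}(c)$ are "missing" from the $2n+1$ columns available, and where the two columns $a_i,b_i$ of the original mixed cell sit. Recall $|D|=2n+1$ would give a triangulation of a full simplex, but here the ambient Cayley matrix has $2n$ rows, so any $2n+1$ columns in general position carry a unique circuit $c$; the facet $F$ of $C$ we flip across is cut out by $\omega\cdot c=0$. By Lemma~\ref{lem:abprimedef}, $A'$ consists of the sets $D\setminus\{d\}$ for $d$ with $c_d<0$ and $B'$ of those with $c_d>0$; since $c$ has no zero entries on its own support, every index of $D$ is either a "negative" index or a "positive" one, so $|A'|+|B'|=|D|$. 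Now $D$ contains both $a_i$ and $b_i$ for every $i$ except possibly one value of $i$, call it $i_0$, where exactly one of $\{a_{i_0},b_{i_0}\}$ lies in $D$; this is the content of the sentence preceding the lemma (one of $\{a_i,b_i\}$ may be missing, together with pairs from a single $A_j$).

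First I would pin down the structure of $D$. The columns of $\textup{Cayley}(\A)$ not among $\{a_1,b_1,\dots,a_n,b_n\}$ all come from some single $A_j$ (a "third" column of that block) — the facet inequality of Lemma~\ref{lem:ineq} is indexed by exactly one such extra column $\gamma$. So $D$ is obtained from the $2n$ columns of the mixed cell by adding $\gamma$ and deleting the at-most-one missing partner; write $\{a_{i_0},b_{i_0}\}\setminus D=\{e\}$ for the (possibly empty) missing index, and let the block of $\gamma$ be $A_{j_0}$. The key combinatorial fact is: a set $D\setminus\{d\}$ in $A'\cup B'$ fails to be mixed exactly when, together with the missing indices $\{e\}\cup\{\gamma\}$ — wait, more precisely, together with the $2n+1-|D|$ indices that are absent from the $2n+1$ "slots" — it indexes three columns from a common block. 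Since only the block $A_{j_0}$ can contribute three columns (every other block contributes exactly two, namely $a_i,b_i$, across all of $D$), the non-mixed sets in $A'\cup B'$ are precisely those $D\setminus\{d\}$ with $d\notin A_{j_0}$: deleting such a $d$ leaves all three $A_{j_0}$-columns of $D$ present, and (when $i_0$-type deletion is not relevant) this is a non-mixed simplex. Conversely deleting a $d$ lying in the $A_{j_0}$-block of $D$ leaves only two $A_{j_0}$-columns and yields a mixed set.

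Hence the number of mixed elements of $A'\cup B'$ equals the number of indices $d\in D$ that lie in the $A_{j_0}$-block of $D$. That block has either $3$ columns (the generic case: $a_{j_0},b_{j_0},\gamma$) or $2$ columns (the degenerate case of Example~\ref{ex:degenerate}, where $i_0=j_0$ and one of $a_{j_0},b_{j_0}$ is the missing index $e$). This gives $3$ or $2$ mixed elements of $A'\cup B'$ respectively — establishing the first sentence. For the second sentence, I would observe that each mixed element $D\setminus\{d\}$ of $A'\cup B'$ lies in $A'$ or $B'$ according to the sign of $c_d$, and that the signs of $c$ restricted to the $A_{j_0}$-block of $D$ cannot be all equal: if they were, then by Observation~\ref{obs:circuitsign} and the affine dependence encoded by $c$ (the Cayley block structure forces the entries of $c$ on any single $A_j$-block to balance against the lifted points, and in particular a circuit supported on $\le 3$ collinear-in-Cayley points of one block must have a sign change there), one side would get all the mixed cells and the other none, contradicting that both $\omega+\varepsilon c$ and $\omega-\varepsilon c$ induce honest mixed subdivisions with mixed cells above the facet. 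So in the $3$-column case the signs are $(+,+,-)$ or $(+,-,-)$ in some order, giving $\{1,2\}$ split; in the $2$-column case the signs are $(+,-)$, giving $\{1,1\}$. Either way each of $A'$, $B'$ has one or two mixed elements.

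The main obstacle I anticipate is the sign argument in the last paragraph: showing the circuit $c$ cannot be one-signed on the $A_{j_0}$-block. I would handle this by going back to the Cayley matrix directly — the rows of $\textup{Cayley}(\A)$ indexed by the block-indicator for $A_{j_0}$ read "$1$" on exactly the $A_{j_0}$-columns of $D$ and "$0$" elsewhere, so the defining relation $\textup{Cayley}(\A)\,c = 0$ forces $\sum_{d\in A_{j_0}\text{-block}}c_d=0$; with two or three summands and no zero entries, a sign change is immediate. This also cleanly separates the $2$-versus-$3$ count and makes the "exactly one extra column $\gamma$" claim (inherited from Lemma~\ref{lem:ineq}) do all the work. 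I would double-check the edge case where the missing index $e$ itself lies in the $A_{j_0}$-block (i.e. $i_0=j_0$), since then the "missing indices together with $D\setminus\{d\}$" bookkeeping must include $e$ when testing the three-columns-from-$A_{j_0}$ condition — but as Example~\ref{ex:degenerate} shows this is exactly the degenerate $2$-mixed-element case and is consistent with the count above.
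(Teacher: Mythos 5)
Your argument is correct, and for the lower-bound half of the lemma it takes a genuinely different and more self-contained route than the paper. For the upper bound (at most three mixed elements), both you and the paper observe that the mixed elements are exactly those $D\setminus\{d\}$ with $d$ among the (at most three) columns $a_{j_0},b_{j_0},\gamma$ of $D$ lying in the block containing $\gamma$, since that is the only block with three of the $2n+1$ candidate columns. For the lower bound, the paper argues that $D\setminus\{\gamma\}$ is mixed because the circuit arose from a mixed cell (so $A'$ has one), and then invokes invariance of mixed volume to conclude $B'$ must also have one. You instead use the Cayley block-row relation $\textup{Cayley}(\A)\,c=0$: the indicator row for the block $A_{j_0}$ gives $c_{a_{j_0}}+c_{b_{j_0}}+c_\gamma=0$ (dropping any terms outside $\textup{supp}(c)$), and since none of the two or three surviving summands is zero, there is a sign change; combined with $c_\gamma<0$ from Observation~\ref{obs:circuitsign}, this forces at least one mixed element in each of $A'$ and $B'$, and the count $\{1,2\}$ or $\{1,1\}$ follows immediately. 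This is more elementary than invoking mixed-volume invariance and also directly yields $|\{a_{j_0},b_{j_0},\gamma\}\cap\textup{supp}(c)|\in\{2,3\}$, so it proves both sentences in one stroke.

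One small bookkeeping slip worth noting: you write that $D$ contains both $a_i,b_i$ for all $i$ except possibly a single $i_0$. In fact several blocks $j\neq j_0$ can have \emph{both} $a_j,b_j$ missing from $D$ (the block-row relation forces them to vanish in pairs for $j\neq j_0$), which is what the paper means by ``pairs of vectors from the same $A_j$.'' The only block that can lose exactly one of its two mixed-cell columns is $j_0$ itself. This does not affect your proof, since the count only involves the $A_{j_0}$-block, but the description of the missing set should be corrected.
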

\begin{proof}
By Lemma~\ref{lem:abprimedef} $A'\cap B'=\emptyset$. Let $i$ be the index of the configuration for which an additional column was considered to form the circuit. To pick a mixed cell from the support, for all but the $i$th configuration, we have only one choice for picking subsets. For the $i$th configuration, there are three elements to choose from. This gives three different ways of picking a cardinality 2 subset. Of these it is, by the assumption that the circuit arose from a mixed cell, never possible that $A'$ has no mixed cell and because the mixed volume is invariant it is also not possible to have no mixed cell in $B'$.
\end{proof}

\begin{lemma}
If a mixed cell $M$ of $A$ is changed when passing to $B$, then $c$ also appears as a facet inequality of the mixed cell cone $C_M$.
\end{lemma}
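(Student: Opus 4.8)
The plan is to translate the hypothesis ``$M$ is changed on passing from $A$ to $B$'' through the flip rule~(\ref{eq:rule}) and the description of $A',B'$ in Lemma~\ref{lem:abprimedef}, and then to match the resulting circuit against the explicit list of facet inequalities of $C_M$ furnished by Lemma~\ref{lem:ineq}. Throughout I identify the mixed cell $M=((a_1,b_1),\dots,(a_n,b_n))$ with the column set $\{a_1,b_1,\dots,a_n,b_n\}$ of a fully mixed maximal simplex of the triangulation $A$ of $\textup{Cayley}(\A)$ induced by $\omega+\varepsilon c$; since $|M|=2n$ equals the size of a maximal simplex and since fully-mixedness and the rank condition in Definition~\ref{def:mixedcell} depend only on this column set, ``$M$ is changed'' means exactly $M\notin B$.

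First I would read~(\ref{eq:rule}): as $M\in A$, the simplex $M$ lies in the first set of the union~(\ref{eq:rule}) if and only if no $\tau\in A'$ satisfies $\tau\subseteq M$; hence $M\notin B$ forces the existence of $\tau\in A'$ with $\tau\subseteq M$. By Lemma~\ref{lem:abprimedef}, $A'=\{D\setminus\{d\}:d\in D,\ c_d<0\}$ with $D=\textup{supp}(c)$, so $\tau=D\setminus\{d\}$ for some $d\in D$ with $c_d<0$, and $D\setminus\{d\}\subseteq M$, i.e.\ $D\subseteq M\cup\{d\}$.

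Next, pin down that $d$ is a single index outside $M$. Since $c$ is a circuit, the columns of $\textup{Cayley}(\A)$ indexed by $D$ are minimally dependent, in particular dependent; the $2n$ columns indexed by $M$ are linearly independent by the first bullet of Definition~\ref{def:mixedcell}. Therefore $D\not\subseteq M$, which combined with $D\subseteq M\cup\{d\}$ gives $d\notin M$ and $D\setminus M=\{d\}$. In particular $d$ is a column of exactly one block $A_i$ with $d\neq a_i,b_i$ --- precisely the kind of ``missing index $\gamma$'' from which (the proof of) Lemma~\ref{lem:ineq} produces one of the $\sum_j(m_j-2)$ irredundant facet inequalities of $C_M$, whose coefficient vector is the circuit of $\textup{Cayley}(\A)$ supported on $M\cup\{d\}$.

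Finally I would identify $c$ with that inequality. The $2n\times(2n+1)$ submatrix of $\textup{Cayley}(\A)$ on the columns $M\cup\{d\}$ has rank $2n$, hence null space of dimension one; both $c$ (extended by zeros outside $D$) and the Lemma~\ref{lem:ineq} circuit for index $d$ lie in this null space, so they are proportional and share the support $D$. Observation~\ref{obs:circuitsign} says the Lemma~\ref{lem:ineq} circuit has negative $d$-coordinate, and we already have $c_d<0$, so the proportionality factor is positive and the two inequalities $\omega\cdot c\ge 0$ coincide; since that inequality is irredundant for $C_M$, the circuit $c$ is a facet inequality of $C_M$. The step I expect to be most delicate is the first translation --- correctly extracting from the somewhat intricate rule~(\ref{eq:rule}) that a destroyed maximal simplex must contain a member of $A'$, and then using minimality of the circuit against independence of $M$ to see that $D$ overshoots $M$ by exactly one index (the degenerate case $|D|<2n+1$ of Example~\ref{ex:degenerate} is handled by the same argument, but deserves a sanity check); matching supports and signs afterwards is routine given Lemma~\ref{lem:ineq} and Observation~\ref{obs:circuitsign}.
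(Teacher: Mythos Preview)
Your argument is correct and follows essentially the same route as the paper's own proof: both extract from the flip rule that a destroyed $M$ must contain some $\tau=D\setminus\{d\}\in A'$, identify $d$ as the extra index $\gamma$ outside $M$, and then invoke Lemma~\ref{lem:ineq} (with Observation~\ref{obs:circuitsign} for the sign) to conclude that the resulting circuit is exactly $c$ and defines a facet of $C_M$. Your version is simply a more detailed unpacking of the paper's four-sentence proof, including the explicit independence-versus-dependence argument showing $d\notin M$, which the paper leaves implicit.
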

\begin{proof}
Because $M$ is changed, it must contain a mixed subset from $A'$. Such subset has form $D\setminus\{d\}$. But now, if we tried to determine the mixed cell cone of $M$, when we construct the circuit inequality induced by letting $\gamma=d$, we would indeed obtain $c$ as a circuit. By Lemma~\ref{lem:ineq} the circuit defines a facet.
\end{proof}
This lemma will be important later for parallelisation. We will say that $M$ \emph{gives rise to} $c$ and we could write up a combinatorial condition for this. However, in our final tree traversing algorithm deciding if $M$ gives rise to $c$ is not necessary.

A consequence of Lemma~\ref{lem:mixflip} is that the mixed cells in $B$ can be reconstructed from the mixed cells in $A$. This gives the following simplified algorithm for keeping track of only the mixed cells when applying Equation~(\ref{eq:rule}).
\begin{algorithm}[Bistellar flip for mixed cells]$ $\label{alg:bistellar}\\
{\bf Input:}
\begin{itemize}
\item A tuple $\A=(A_1,\dots,A_n)$.
\item A mixed cell $M=((a_1,b_1),\dots,(a_n,b_n))$ with mixed cell cone $C_M$.
\item A circuit $c\in\R^m$ defining a facet $F$ of $C_M$ with $\omega\in F$ being generic.
\item The set $A_\textup{mix}$ of all mixed cells w.r.t. $\omega+\varepsilon c$ for $\varepsilon>0$ sufficiently small.
\end{itemize}
{\bf Output:} The mixed cells $B_\textup{mix}$ w.r.t. $\omega-\varepsilon c$ for $\varepsilon>0$ sufficiently small.
\begin{itemize}
\item $B_\textup{mix}:=\emptyset$.
\item Let $i$ be the configuration and $\gamma$ the column index giving rise to $c$.
\item Let $\alpha=a_i$, $\beta=b_i$.
\item For each mixed cell $\sigma\in A_\textup{mix}$
\begin{itemize}
\item If $\sigma$ gives rise to $c$
\begin{itemize}
\item if $c_\alpha>0$ then $B_\textup{mix}:=B_\textup{mix}\cup\{\sigma\cup\{\gamma\}\setminus \{\alpha\}\}$.
\item if $c_\beta>0$ then $B_\textup{mix}:=B_\textup{mix}\cup\{\sigma\cup\{\gamma\}\setminus \{\beta\}\}$.
\end{itemize}

\end{itemize}
\item Return $B_\textup{mix}$
\end{itemize}
\end{algorithm}
\begin{remark}
\label{rem:merge}
It is possible that the same set is inserted into $B$ more than once. This happens if two mixed cells of $A$ are the same, except on $A_i$  --- for example if we reverse our running example i.e. go from Figure~\ref{fig:basicexample2} to Figure~\ref{fig:basicexample}.
\end{remark}
\begin{remark}
Recall that it is possible that the subconfiguration indexed by the support of the circuit did not involve all $n$ configurations. In this case maximal cells would just keep their indices to irrelevant configurations as they change. This will be essential later: If two different mixed cells gave rise to the same circuit wall via Lemma~\ref{lem:ineq} and if the mixed cells are the same restricted to the support of the circuit then the two mixed cells will give rise to two disjoint set of mixed cells on the other side of the wall.
\end{remark}
In Section~\ref{sec:reversesearch} we will see that these two observations together with~\ref{lem:mixflip} allow us to treat mixed cells independently and therefore in parallel.

\section{Tropical homotopy continuation}
In this section we present our main algorithm. If we know the mixed cells of $\A$ with respect to a lift $\omega\in\R^m$ and wish to obtain the mixed cells with respect to some vector $\omega'\in\R^m$, the idea is to continuously move from $\omega$ to $\omega'$ along a straight line $l$. When a facet of a mixed cell cone of one of our known mixed cells is reached, we perform the bistellar flip on the mixed cells via Algorithm~\ref{alg:bistellar}.

For the description to make sense it is important that $l$ indeed does exit the intersection of the mixed cell cones in the relative interior of a facet, and not a lower dimensional face. Following the ideas of \emph{computational geometry} we solve the problem by assuming that the start vector $\omega$ is in general position w.r.t. $\A$. We will be more precise in Section~\ref{sec:generic}
\begin{algorithm}[Tropical homotopy continuation]$ $\label{alg:homotopy}\\
{\bf Input:}
\begin{itemize}
\item A tuple $\A=(A_1,\dots,A_k)$ with $A_i\in\Z^{n\times m_i}$ and $m=\sum_im_i$.
\item A generic start vector $\sigma\in\R^m$ and a target vector $\tau\in\R^m$.
\item The set $S\subseteq (\{1,\dots,m\}\times\{1,\dots,m\})^n$ of the mixed cells in the mixed subdivision of $A$ induced by $\sigma$.
\end{itemize}
{\bf Output:} The set $S'\subseteq (\{1,\dots,m\}\times\{1,\dots,m\})^n$ of mixed cells in the mixed subdivision of $A$ induced by $\tau+\varepsilon\sigma$ for $\varepsilon>0$ sufficiently small.
\begin{itemize}
\item While $\tau\not\in \bigcap_{M\in S}\mcc_M$
\begin{itemize}
\item Let $c$ be the first inequality in the description of $\bigcap_{M\in S}\mcc_M$ violated along the line from $\sigma$ to $\tau$, but satisfied at $\sigma$. \item Apply Algorithm~\ref{alg:bistellar} to update $S$ with respect to $c$.
\end{itemize}
\item Return $S':=S$.
\end{itemize}
\end{algorithm}
\subsection{Generic start vectors}
\label{sec:generic}
We will now explain how to use symbolic perturbations to ensure that $l$ passes through a unique facet when leaving a finite intersection of mixed cell cones. Since our coefficient changing strategy is similar to a Gr\"obner walk, we will use the language of term orderings to be precise about the perturbations. We, however, only need that term orderings are total orderings extending to $\R^m$ and respecting translation. By Robbiano's theorem every such ordering is also represented by a vector $a_1+\varepsilon a_2+\cdots +\varepsilon^{m-1}a_m$ with $a_1,\dots,a_m\in\R^m$ and $\varepsilon$ an infinitesimal or the parameter in the ordered field $\R(\varepsilon)$. Our strategy here is a variant of the simplification~\cite{phdthesis} of the generic Gr\"obner walk~\cite{genericgroebnerwalk}.

The starting point for $l$ will be $\sigma_\varepsilon=a_1+\varepsilon a_2+\cdots +\varepsilon^{m-1}a_m$ and the target $\tau$. We interpolate linearly by letting $\omega(t)=(1-t)\sigma_\varepsilon+t\tau$. Let $c,c'\in\R^m$ be normals of facets of mixed cell cones with $\langle\sigma_\varepsilon,c\rangle>0$, $\langle\tau,c\rangle\geq 0$ and similar inequalities holding for $c'$ with $\langle\cdot,\cdot\rangle$ denoting the usual inner product. We may assume that $\langle\tau,c\rangle> 0$ and $\langle\tau,c'\rangle>0$, as Algorithm~\ref{alg:homotopy} does not pass through facets where a zero value is attained.

 Let $t$ and $t'$ be the values such that $\langle \omega(t),c\rangle=0$ and $\langle \omega(t'),c'\rangle=0$. Equivalently, $t={\langle\sigma_\varepsilon,c\rangle\over \langle\sigma_\varepsilon,c\rangle-\langle\tau,c\rangle}$ and $t'={\langle\sigma_\varepsilon,c'\rangle\over \langle\sigma_\varepsilon,c'\rangle-\langle\tau,c'\rangle}$. Now
\begin{align}
\begin{split}
t&<t'\Leftrightarrow\\
{\langle\sigma_\varepsilon,c\rangle\over \langle\sigma_\varepsilon,c\rangle-\langle\tau,c\rangle}&<{\langle\sigma_\varepsilon,c'\rangle\over \langle\sigma_\varepsilon,c'\rangle-\langle\tau,c'\rangle}\Leftrightarrow\\
{\langle\sigma_\varepsilon,c\rangle-\langle\tau,c\rangle\over \langle\sigma_\varepsilon,c\rangle}&>{\langle\sigma_\varepsilon,c'\rangle-\langle\tau,c'\rangle\over \langle\sigma_\varepsilon,c'\rangle}\Leftrightarrow\\
{\langle\tau,c\rangle\over \langle\sigma_\varepsilon,c\rangle}&<{\langle\tau,c'\rangle\over \langle\sigma_\varepsilon,c'\rangle}\Leftrightarrow\\
{\langle\sigma_\varepsilon,c\rangle\over \langle\tau,c\rangle}&>{\langle\sigma_\varepsilon,c'\rangle\over \langle\tau,c'\rangle}\Leftrightarrow\\
{\langle\sigma_\varepsilon, \langle\tau,c'\rangle c\rangle}&>{\langle\sigma_\varepsilon,\langle\tau,c\rangle c'\rangle}.
\end{split}
\end{align}
The last inequality is decided by comparing $\langle\tau,c'\rangle c$ and $\langle\tau,c\rangle c'$ in the ordering that $a_1,\dots,a_m$ represent and is therefore independent of the value of $\varepsilon$ for $\varepsilon>0$ sufficiently small. Consequently, we can decide which circuit hyperplane our perturbed line first meets without computing in $\R(\varepsilon)$, but instead comparing vectors arising from $\tau$ and the circuits in the ordering $\prec$.
\subsection{Reverse search and parallelisation}
\label{sec:reversesearch}
The loop in Algorithm~\ref{alg:bistellar} processes each mixed cell independently with the exception that the set $B_\textup{mix}$ is modified for all mixed cells. Rather than updating $B_\textup{mix}$ we could however recursively continue the processing of the next cell. This will give an algorithm that computes all target solutions i.e. mixed cells as the leaves of a recursion tree. It, however, has the problem that the same mixed cells may be computed more than once, as tropical homotopy paths can merge (Remark~\ref{rem:merge}). We solve this problem by applying \emph{reverse search}~\cite{af-rse-96}.

Imagine the union of homotopy paths as an embedded graph $G$ in $[0,1]\times\R^{n}$. Each edge of $G$ is oriented in direction of decreasing $t$. In particular $G$ has no directed cycles. To do reverse search on $G$, we invent a rule for which ingoing edge to keep at each vertex of $G$. This turns $G$ into a directed forest $F$ with roots only placed at $t=1$.  All vertices of $G$ can now be found by traversing all trees in $F$ starting at $t=1$.

The point on an edge in $G$ is obtained as a linear function in the lift $\omega$. That is, as a linear function of $t$, since it is the solution $x\in\R^n$ to the system $$ 
\left(\begin{array}{c}
x\\
1  \end{array}\right)^t\cdot
\left(\begin{array}{c}
\textup{Cayley}(A_1,\dots,A_n)\\
w  \end{array}\right)\cdot
\left(\begin{array}{ccc}
1 & \hdots & 0\\
-1 & \hdots & 0\\
\vdots & \ddots & \vdots\\
0 & \hdots & 1\\
0 & \hdots & -1\\
 \end{array}\right)
=0.
$$

\begin{observation}
Extending on Observation~\ref{obs:circuitsign},
consider an edge
parametri\-sed by the mixed cell $((a_1,b_1),\dots,(a_n,b_n))$. Let $\gamma$ be an entry from the $i$th configuration giving rise to a circuit $c$. Assume that it is possible to choose the sign of $c$ such that $\langle\sigma_\varepsilon,c\rangle>0$ and $\langle\tau,c\rangle\geq 0$ (meaning $c$ is an inner normal of the mixed cell cone of $((a_1,b_1),\dots,(a_n,b_n))$). By Observation~\ref{obs:circuitsign} $c_\gamma<0$. The edge is oriented towards the vertex $v$ associated to with the selection $((a_1,b_1),\dots,(a_i,b_i,\gamma),\dots,(a_n,b_n))$. The edge associated with $((a_1,b_1),\dots,(a_i,\gamma),\dots,(a_n,b_n))$ is incident to $v$ and oriented towards $v$ if and only if $c_{b_i}<0$. Finally the edge associated with $((a_1,b_1),\dots,(b_i,\gamma),\dots,(a_n,b_n))$ is incident to $v$ and oriented towards $v$ if and only if $c_{a_i}<0$.
\end{observation}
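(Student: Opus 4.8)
The plan is to strip away the geometry of the homotopy line and reduce the statement to a local count around the circuit $c$, using Lemma~\ref{lem:abprimedef} to decide which side of the wall each cell lies on and the Cayley indicator rows to decide which of those cells are mixed. Write $D=\textup{supp}(c)$ and $E=\{a_1,b_1,\dots,a_n,b_n,\gamma\}\setminus D$; the $2n+1-|D|$ columns of $E$ are common to every maximal cell of either subdivision adjacent to the wall $F$. I would first note that $\gamma\in D$: otherwise $c$ would be a linear dependence supported on $\{a_1,b_1,\dots,a_n,b_n\}$, contradicting that these columns form a full rank submatrix of $\textup{Cayley}(\A)$. Choosing the sign of $c$ so that it is the inner normal of $C_M$, Observation~\ref{obs:circuitsign} gives $c_\gamma<0$, and by Lemma~\ref{lem:abprimedef} the maximal cells on the $\omega+\varepsilon c$ side of $F$ are exactly the $(D\setminus\{d\})\cup E$ with $c_d<0$, those on the $\omega-\varepsilon c$ side exactly the ones with $c_d>0$.

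The core claim I would isolate is as follows. Each of the three candidates in question is obtained from $M$ by replacing the $i$th pair $(a_i,b_i)$ with a two-element subset of $\{a_i,b_i,\gamma\}$; write $e$ for the single column of $\{a_i,b_i,\gamma\}$ it leaves out. Then the candidate is an edge of $G$ incident to $v$ if and only if $e\in D$, in which case its index set is $(D\setminus\{e\})\cup E=\{a_1,b_1,\dots,a_n,b_n,\gamma\}\setminus\{e\}$, so it contributes exactly two columns to every block (two of $\{a_i,b_i,\gamma\}$ to block $i$, and $\{a_l,b_l\}$ to every other block $A_l$) and is a simplex, since it omits the element $e$ of the unique circuit $D$; hence it is a genuine mixed cell. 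By Lemma~\ref{lem:abprimedef} it then lies on the $\omega+\varepsilon c$ side, i.e.\ the side towards $v$, exactly when $c_e<0$. For the ``only if'' part of the incidence statement: if $e\notin D$ then $e\in E$, so $e$ belongs to every maximal cell adjacent to $F$, whereas the candidate omits $e$, so it is not among the cells produced by the flip at $v$.

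Granting this, the three assertions drop out. For $M$ itself, $e=\gamma\in D$ and $c_\gamma<0$, so $M$ is an edge incident to $v$ and oriented towards $v$; this is the first assertion. For the candidate $((a_1,b_1),\dots,(a_i,\gamma),\dots,(a_n,b_n))$ one has $e=b_i$: it is incident to $v$ iff $b_i\in D$, i.e.\ $c_{b_i}\ne 0$, and then it is oriented towards $v$ iff $c_{b_i}<0$; since $c_{b_i}<0$ already forces $b_i\in D$, both requirements amount to the single condition $c_{b_i}<0$. The candidate $((a_1,b_1),\dots,(b_i,\gamma),\dots,(a_n,b_n))$ is symmetric ($e=a_i$), yielding the condition $c_{a_i}<0$.

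The step I expect to cost the most care is the geometric one buried in the core claim: identifying ``lies on the $\omega+\varepsilon c$ side of $F$'' with ``oriented towards $v$'' in the orientation of $G$ fixed in Section~\ref{sec:reversesearch}. This is exactly where the hypothesis that $c$ can be chosen with $\langle\sigma_\varepsilon,c\rangle>0$ and $\langle\tau,c\rangle\geq0$ is used, together with the fact that $c$ is then the inner normal of $C_M$. I would also treat carefully the degenerate configuration of Example~\ref{ex:degenerate}, where one of $a_i,b_i$ fails to lie in $D$ (so $c_{a_i}=0$ or $c_{b_i}=0$) and the corresponding candidate is simply not incident to $v$; everything stays consistent with Lemma~\ref{lem:mixflip}, since the block-$i$ indicator row of $\textup{Cayley}(\A)$ gives $c_{a_i}+c_{b_i}=-c_\gamma>0$, so at least one of $c_{a_i},c_{b_i}$ is positive and hence, besides $M$, at least one further candidate is incident to $v$ with its edge on the $\omega-\varepsilon c$ side.
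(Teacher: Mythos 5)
The paper states this Observation without proof, so there is no proof of record to compare against; your argument correctly supplies one using precisely the machinery the surrounding text sets up — Lemma~\ref{lem:abprimedef} to read off, from the sign of $c_d$, which cells adjacent to the wall lie on the $\omega+\varepsilon c$ side, Observation~\ref{obs:circuitsign} for $c_\gamma<0$, and the Cayley indicator row identity $c_{a_i}+c_{b_i}=-c_\gamma$ for the mixedness and degenerate-case checks. One small sharpening for the ``only if'' direction of incidence: if $e\notin D$ the candidate's $2n$ columns contain the entire dependent set $D$, so it fails the full-rank condition of Definition~\ref{def:mixedcell} and is not a mixed cell candidate at all, which is slightly stronger than ``not among the cells produced by the flip at $v$.''
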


A common problem in reverse search is that each vertex needs to be processed more than once, namely once for each of its ingoing edges in $G$, since one has to determine information about the end vertex of an edge in $G$ to decide if the edge is also present in the tree. This also applies to reverse search for tropical homotopy. In Figure~\ref{fig:reversesearch} we depict first $G$, the tree, then all edges (mixed cell) candidates under consideration and finnally $G$ annotated with circuit signs.

\begin{figure}[b]
\begin{center}
\includegraphics[scale=0.65]{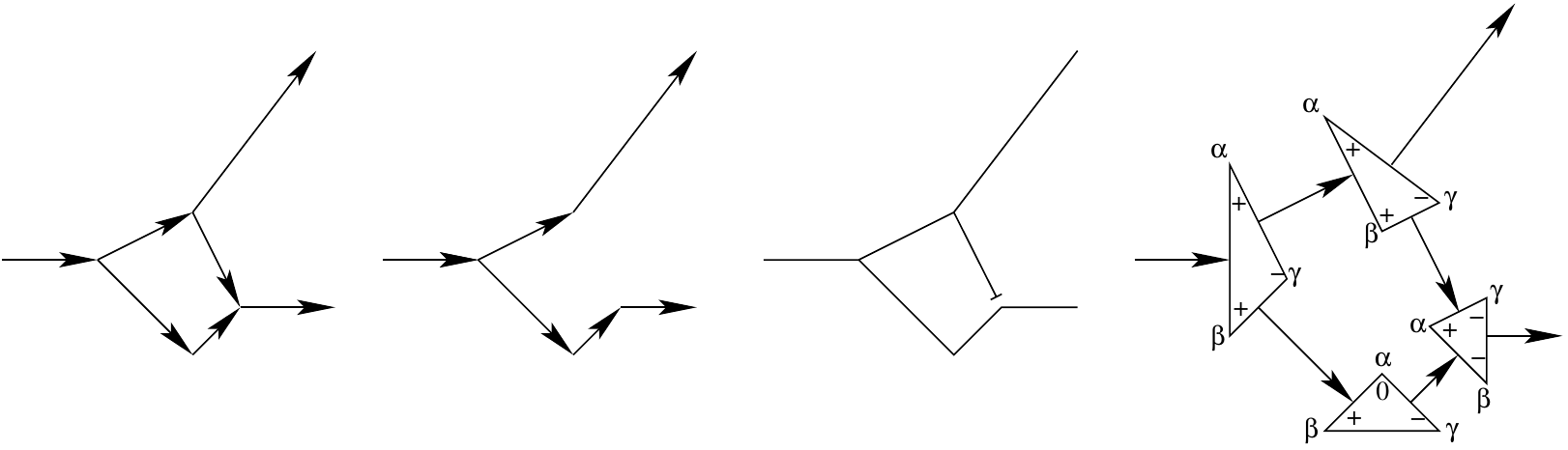}
\end{center}
\caption{The embedded directed graph $G$, the reverse search tree, the actual edges (mixed cells) under consideration and a schematic drawing of the graph with associated symbols to which the expanded reverse search rule is applied.}
\label{fig:reversesearch}
\end{figure}

To define the reverse search tree we come up with the following rule: for each vertex, i.e. a selection $((a_1,b_1),\dots,(a_i,b_i,\gamma),\dots,(a_n,b_n))$ if it has two in-edges gotten by choosing mixed cells $(\alpha,\beta)$ and $(\alpha,\gamma)$ respectively, we keep the keep the edge with smallest second index. That is, we keep $(\alpha,\beta)$ if $\beta<\gamma$.

A recursive algorithm for traversing the tree is easily described by simply replacing the processing of $\sigma$ in Algorithm~\ref{alg:bistellar} by the following and at the same time stating, that now it is impossible for a mixed cell to be inserted twice into $B_\textup{mix}$ and therefore the algorithm might as well be implemented recursively. 
\begin{itemize}
\item If $\sigma$ gives rise to $c$
\begin{itemize}
\item if $c_\alpha>0\wedge c_\beta>0$ then $B_\textup{mix}:=B_\textup{mix}\cup\{\sigma\cup\{\gamma\}\setminus \{\alpha\}\}\cup\{\sigma\cup\{\gamma\}\setminus \{\beta\}\}$
\item if $c_\alpha>0\wedge c_\beta=0$ then $B_\textup{mix}:=B_\textup{mix}\cup\{\sigma\cup\{\gamma\}\setminus \{\alpha\}\}$
\item if $c_\alpha>0\wedge c_\beta<0\wedge\beta<\gamma $ then $B_\textup{mix}:=B_\textup{mix}\cup\{\sigma\cup\{\gamma\}\setminus \{\alpha\}\}$
\item if $c_\alpha=0\wedge c_\beta>0$ then $B_\textup{mix}:=B_\textup{mix}\cup\{\sigma\cup\{\gamma\}\setminus \{\beta\}\}$

\item if $c_\alpha<0\wedge c_\beta>0\wedge\alpha<\gamma $ then $B_\textup{mix}:=B_\textup{mix}\cup\{\sigma\cup\{\gamma\}\setminus \{\beta\}\}$
\end{itemize}
\end{itemize}
The other four possibilities for signs do not appear since at least one of $c_\alpha,c_\beta$ and $c_\gamma$ must be positive for $c$ to be a circuit for the Cayley matrix.

\begin{example}
In the schematic example in Figure~\ref{fig:reversesearch}, processing the edge leaving the root in the fourth picture, we discover a circuit arising from a certain choice of $\gamma$. The signs of the entries of the circuit are shown in the left triangle. The situation is as described in the first situation in the list, the current edge exists in the tree and we have now two more mixed cells to consider.  The situation is the same for the upper edge in the next level. When processing the lower edge the signs fall into the fourth case of the list. Consequently, we were right to follow the edge and there is one more mixed cell (edge) to consider. When considering this edge, we obtain signs $c_\alpha>0, c_\beta<0$ and $c_\gamma<0$ for a circuit and item 3 tells us, assuming that $\beta<\gamma$ to also consider the last edge. When processing the triangle from the upper side, however, the names of $\gamma$ and $\beta$ are swapped, and item 3 will tell us not to follow the last edge this time.
\end{example}

Focusing on a single tree, reverse search allows us to either make a memoryless traversal of the tree or to apply a general parallel tree traversal algorithm. 
\section{Solving generic systems}
\label{sec:gensolving}
Having established the tropical homotopy algorithm we are able to solve tropical systems for any particular generic choice of coefficients if the solutions for another set of coefficients are known. 
In this section we show how to find such other set of solutions.
Our idea will be to break off pieces of larger polytopes with known mixed cells.
The following lemma is essential. For $\I\subseteq\{1,\dots,m\}$, let $\A_{I^c}$ denote the configuration restricted to columns indexed by $\{1,\dots,m\}\setminus\I$.
\begin{lemma}
\label{lem:breakingoff}
Let $\I\subseteq\{1,\dots,m\}$ and $S$ be the set of mixed cells of $\A$ with respect to $-1_\I+\varepsilon\omega$ where $1_\I$ is the characteristic vector of $\I$, $\omega\in\R^m$ and $\varepsilon>0$ is sufficiently small. Let $\pi:\R^m\rightarrow\R^{m-|\I|}$ be the projection forgetting coordinates indexed by $\I$. Then the mixed cells of $\A_{\I^c}$ induced by $\pi(\omega)$ are $\{M\in S:M\cap\I=\emptyset\}$.
\end{lemma}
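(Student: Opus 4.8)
The plan is to relate mixed cells of $\A$ lifted by $-1_\I + \varepsilon\omega$ directly to faces of the Minkowski sum $L(A_1,\cdot) + \cdots + L(A_n,\cdot)$, and to show that sending the $\I$-coordinates to $-\infty$ (which is the effect of the $-1_\I$ term dominating the infinitesimal $\varepsilon\omega$) is the same as simply deleting those columns. First I would unwind Definition~\ref{def:mixedcell}: a tuple $M = ((a_1,b_1),\dots,(a_n,b_n))$ is a mixed cell for $(\A,\nu)$ exactly when the parallelepiped $\sum_i \textup{conv}((A_{a_i},\nu_{a_i}),(A_{b_i},\nu_{b_i}))$ is a facet of $\sum_i L(A_i,\nu_i)$, equivalently (as in the proof of Lemma~\ref{lem:ineq}) when there is a normal $p \in \R^n\times\R_{>0}$ such that for every $i$ and every column index $j$ of $A_i$ with $j \notin\{a_i,b_i\}$, the lifted point $(A_j,\nu_j)$ lies strictly below the hyperplane with normal $p$ through $(A_{a_i},\nu_{a_i})$ and $(A_{b_i},\nu_{b_i})$.

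The two inclusions then go as follows. For ``$\supseteq$'': take a mixed cell $M$ of $\A_{\I^c}$ induced by $\pi(\omega)$, with certifying normal $p$. I claim $M$ is a mixed cell for $(\A, -1_\I + \varepsilon\omega)$ with the same $p$ (suitably rescaled), for $\varepsilon$ small. Indeed $M$ uses no columns from $\I$, so the facet-defining inequalities coming from non-$\I$ columns are exactly those certified by $p$ in $\A_{\I^c}$ and remain strict under the perturbation $\varepsilon\omega$ for small $\varepsilon$; for a column $j\in\I$, its lift is $-1 + \varepsilon\omega_j$, which tends to $-\infty$ relative to the other (bounded) lifts as $\varepsilon\to 0$ after rescaling — more carefully, the first coordinate of $p$ can be taken bounded and its last coordinate bounded below, so $p$ pairs the lifted point $(A_j, -1+\varepsilon\omega_j)$ arbitrarily far below the relevant hyperplane. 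Hence all the extra inequalities are satisfied and $M$ is a mixed cell of $\A$; since $M\cap\I=\emptyset$, it lies in $\{M\in S : M\cap\I=\emptyset\}$.

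For ``$\subseteq$'': take $M\in S$ with $M\cap\I=\emptyset$, certified by $p$ for the lift $-1_\I+\varepsilon\omega$. Because $M$ uses no $\I$-columns, restricting attention to the non-$\I$ columns, the defining inequalities of ``$M$ is a mixed cell'' involve only lifts of the form $\varepsilon\omega_j$; dividing through by $\varepsilon>0$ (a positive scaling, harmless for the strict determinantal sign conditions) we get exactly the inequalities certifying that $M$ is a mixed cell of $\A_{\I^c}$ with lift $\omega$ restricted to $\I^c$, i.e. $\pi(\omega)$, with normal $p$ rescaled in its last coordinate by $1/\varepsilon$. This needs the observation that deleting the $\I$-columns from $\textup{Cayley}(\A)$ gives $\textup{Cayley}(\A_{\I^c})$ (possibly with some now-empty blocks, but the full-rank/circuit bookkeeping is unaffected), and that a mixed cell candidate of $\A$ disjoint from $\I$ is a mixed cell candidate of $\A_{\I^c}$. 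The conclusion is that $M$ is a mixed cell of $\A_{\I^c}$ induced by $\pi(\omega)$.

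The main obstacle I expect is making the ``$-\infty$ limit'' argument rigorous: one must be careful that the infinitesimal perturbation $\varepsilon\omega$ does not interfere, i.e. that for \emph{small enough} $\varepsilon$ the sign of each relevant $(2n+1)\times(2n+1)$ determinant (with the lift row $-1_\I+\varepsilon\omega$ appended) is governed by the $-1_\I$ part whenever an $\I$-column is involved, and by the $\varepsilon\omega$ part when no $\I$-column is involved. This is a statement about a polynomial in $\varepsilon$ having a definite sign near $0$, and the cleanest way to phrase it is: the lift $-1_\I$ alone already puts every $\I$-column strictly below every relevant supporting hyperplane of the $\I^c$-subconfiguration (this is where one uses that $\pi(\omega)$ induces a mixed subdivision at all, so such hyperplanes exist with bounded slope), and strictness is an open condition preserved under the small perturbation. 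A secondary, bookkeeping point is handling possible empty blocks in $\textup{Cayley}(\A_{\I^c})$ when $\I$ swallows an entire $A_i$ — but in that degenerate case there are no mixed cells on either side and the statement is vacuous, so one may assume each $A_i$ retains at least two columns outside $\I$.
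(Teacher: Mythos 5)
Your proof is correct, and it establishes the same result by the same two inclusions, but it takes a more elementary route. You work directly from the geometric definition of a mixed cell (a facet certificate $p\in\R^n\times\R_{>0}$, ``lifted point strictly below the supporting hyperplane'') and handle the $\varepsilon\to 0$ limit by rescaling $p$, whereas the paper's proof invokes the already-established circuit-inequality description of the mixed cell cone (Lemma~\ref{lem:ineq}) together with the sign fact $c_\gamma<0$ (Observation~\ref{obs:circuitsign}). With that machinery in hand, each inclusion reduces to a one-line computation with inner products: for $\gamma\notin\I$ one uses $\textup{supp}(c)\cap\I=\emptyset$ so that $c\cdot(-1_\I+\varepsilon\omega)=\varepsilon\,\pi(c)\cdot\pi(\omega)$, and for $\gamma\in\I$ one uses $-1_\I\cdot c=-c_\gamma>0$ so the $\varepsilon$-term is negligible. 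The content is the same — Lemma~\ref{lem:ineq} is precisely the bridge between your ``point-below-hyperplane'' conditions and the paper's circuit inequalities — but the paper's version is shorter because the geometry has been factored out into that lemma, and it avoids the explicit rescaling of the normal that you (correctly) have to be careful about. Two small remarks: your $\supseteq$/$\subseteq$ labels are swapped relative to the statement (your ``$\supseteq$'' proves that a mixed cell of $\A_{\I^c}$ lies in $\{M\in S:M\cap\I=\emptyset\}$, which is the inclusion $\subseteq$), though this is purely cosmetic; and your aside about $\I$ swallowing an entire $A_i$ is a reasonable safeguard, although in that case both sides of the claimed equality are empty and the statement is vacuously true, as you note.
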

\begin{proof} 
``$\supseteq$'': We will check that for $M\in S$ with $M\cap\I=\emptyset$ the vector $\pi(\omega)$ is in the mixed cell cone of $M$. For every $\gamma\in\{1,\dots,m\}\setminus\I\setminus M$ there is one inequality to check. It has form $\pi(c)\cdot\pi(\omega)\geq 0$ where $\pi(c)$ is a circuit of $\textup{Cayley}(\A)$ restricted to columns indexed by the complement of $\I$ and $c$ is a circuit of $\textup{Cayley}(\A)$.
 We know that $c\cdot(-1_\I+\varepsilon\omega)\geq 0$. Therefore, using $\textup{supp}(c)\cap\I=\emptyset$ twice, we get
$$\pi(c)\cdot\pi(\omega)=c\cdot\omega={1\over\varepsilon}(\varepsilon c\cdot\omega)={1\over\varepsilon}(c\cdot (-1_\I)+\varepsilon c\cdot\omega)={1\over\varepsilon}(c\cdot (-1_\I+\varepsilon\omega))\geq 0.$$

``$\subseteq$'': Let $M$ be a mixed cell for $\A_{\I^c}$ with respect to $\pi(\omega)$. It is immediate that $M\cap\I=\emptyset$. Thus it remains to prove that $-1_\I+\varepsilon\omega$ is in the mixed cell cone of $M$. By Lemma~\ref{lem:ineq}, for every $\gamma\in\{1,\dots,m\}\setminus M$ there is one circuit inequality to check. For $\gamma\not\in \I$ the circuit $c$ is also a circuit $\pi(c)$ for $A_{\I^c}$. Hence $\pi(c)\cdot\pi(\omega)\geq 0$. We have $-1_\I\cdot c=0$ so $c\cdot(-1_\I+\varepsilon\omega)\geq 0$ follows. For $\gamma\in \I$ we must show that $c\cdot(-1_\I+\varepsilon\omega)\geq 0$. By Observation~\ref{obs:circuitsign}, $c_a< 0$. Because $\I$ and $\textup{supp}(c)$ overlap only at $a$ we get $-1_\I\cdot c>0$. But now follows, because $\varepsilon$ is small, that $c\cdot(-1_\I+\varepsilon)\geq 0$ as desired.
\end{proof}
While the lemma lets $\varepsilon>0$ go to zero, it is often more convenient to think about coefficients indexed by $\I$ going to $-\infty$, while the remaining are lifted by $\omega$. In the following subsections we apply this lemma.
\subsection{Total degree homotopy}
We describe the tropical analogue of the numerical total degree homotopy and start with an example of a set of mixed cells.
\begin{example}
\label{ex:startsystem}
Let $L_i$ denote the $n\times (n+1)$-matrix with columns $0,e_1,\dots,e_n$. The tuple $(L_1,\dots,L_n)$ of $n$ simplices has exactly one mixed cell w.r.t. the lift $\omega=(e_1+e_2)\times\cdots\times(e_1+e_{n+1})$, namely $M=((1,2), (1,3), \dots,(1,n+1))$. To see that this is a mixed cell, check that the lift $\omega-(1,\dots,1)$ gives this mixed cell following the argument in the proof of Lemma~\ref{lem:ineq}. Here $p=(0,\dots,0,1)$ and for each $L_i$, the upper face in direction $p$ indeed is the edge connecting the vertices of this configuration indexed by $M_i$. Because $(1,\dots,1)$ is in the row space of $\textup{Cayley}(L_1,\dots,L_n)$, also $\omega$ will induce this mixed cell of volume $1$. Computing the volume polynomial $\textup{vol}(\lambda_1L_1+\cdots+\lambda_nL_n)=\textup{vol}((\lambda_1+\cdots+\lambda_n)L_1)=(\lambda_1+\dots+\lambda_n)^n\textup{vol}(L_1)=(\lambda_1+\dots+\lambda_n)^n{1\over n!}$ we see that the coefficient of $\lambda_1\cdots\lambda_n$ is 1 and that there is just one mixed cell.
\end{example}

Consider $\A=(A_1,\dots,A_n)$ with all entries of all $A_i$ being in $\N=\Z_{\geq 0}$. By the total degree $\textup{deg}(A_i)$ of a configuration $A_i$ we mean the largest sum of the entries in a column of $A_i$. A priori, we do not know how to solve a tropical system with support $\A$, but if we for each $A_i$ append the exponent vectors $0,\textup{deg}(A_i)e_1,\dots,\textup{deg}(A_i)e_n$ as columns then a unique mixed cell of volume $\prod_i^{i=n}\textup{deg}(A_i)$ is induced by lifting the appended columns as in Example~\ref{ex:startsystem} and adding to the full vector of heights any $\varepsilon$-small perturbation.

The total degree homotopy now amounts to applying Algorithm~\ref{alg:homotopy} with the negated characteristic vector of the appended columns as a target vector. Thereafter Lemma~\ref{lem:breakingoff} is applied to obtain the solutions of the generic system.

\begin{definition}
For any vector $w\in\R^m$ and any ordering $\prec$ on $\R^n$ we define the new ordering $\prec_w$ by:
$$u\prec_w v\Leftrightarrow w\cdot u<w\cdot v\vee w\cdot u=w\cdot v\wedge u\prec v.$$
\end{definition}

Let $\prec$ be the lexicographic ordering on $\R^n$, or equivalently, for $\varepsilon>0$ sufficiently small, the vector $\varepsilon^0e_1+\varepsilon^1e_2+\cdots+\varepsilon^{m-1+n(n+1)}e_{m+n(n+1)}$.
\begin{algorithm}
[Tropical total degree homotopy]$ $\label{alg:totaldegreehomotopy}\\
{\bf Input:} A tuple $\A=(A_1,\dots,A_n)$ with $A_i\in\N^{n\times m_i}$.\\
{\bf Output:} The mixed cells of $\A$ for the lift $\prec$ defined above.
\begin{itemize}
\item For $i=1,\dots,n$ let $B_i$ be the $n\times(n+1)$-matrix $\textup{deg}(A_i)[0,e_1,\dots,e_n]$.
\item Let $\A':=([B_1A_1],\dots,[B_nA_n])$.
\item Let $S=\{((1,2),(1,3),\dots,(1,n+1))\}$.
\item Let $\tau$ be the vector with an entry for each column in $\A'$ having entries indexed by $B_i$ equal to $-1$ and the entries indexed by $A_i$ equal to $0$.
\item From $S$ compute the set $S'$ of mixed cells of $\A'$ with respect to $\prec_\tau$ via Algorithm~\ref{alg:homotopy}.
\item Let $S'':=\{M\in S':\textup{for all }i\textup{ } M_i\textup{ does not index a column of }B_i\}$.
\item Let $S$ be the vectors in $S''$ with $(n+1,n+1)$ subtracted from each pair.
\item Return $S$.
\end{itemize}
\end{algorithm}
We note that while a start system in the numerical total degree homotopy has segments as Newton polytopes, for the tropical algorithm they are simplices.
\subsection{Tropical regeneration}
As an alternative to the total degree homotopy, we mimic the numerical regeneration process~\cite{Hauenstein:2011:RHS}.
We start with $n$ tropical hyperplanes (i.e. tropical hypersurfaces defined by linear polynomials) in generic position having just one mixed cell. At step $i$ we scale the Newton polytope of the $i$th hyperplane, so that it contains the columns of $A_i$. Simultaneously scaling the coefficients, the solutions of the system are preserved. To the $n+1$ points in the support of the $i$th linear polynomial we then add the columns of $A_i$, the latter having low lifts. Moving the vertices of the scaled simplex to $-\infty$, only the columns $A_i$ remain in the subdivision.

\begin{example}
\label{ex:regeneration}
Suppose we want to solve a generic system with support given by matrices $A_1=\left(\begin{array}{ccc}
0 & 0 & 2 \\
0 & 1 & 0
\end{array}\right)
$ and $A_2=\left(\begin{array}{ccc}
0 & 0 & 1 \\
0 & 1 & 1
\end{array}\right)$. We notice that these fit inside $2\cdot\textup{conv}(0,e_1,e_2)$. In the regeneration process we start with two hypersurfaces in a position where the intersection is known. See Figure~\ref{fig:regeneration}. We then scale one simplex, and break off the excess pieces (see the first column of pictures). For $A_2$, the second simplex is first scaled, so that $A_2$ fits inside. Then excess vertices are moved to $-\infty$. After many combinatorial changes we obtain the solutions of a generic system of equations with support $(A_1,A_2)$. In particular, we can read off the mixed volume. 
\end{example}

\begin{figure}[b]
\begin{center}
\includegraphics[scale=0.19118]{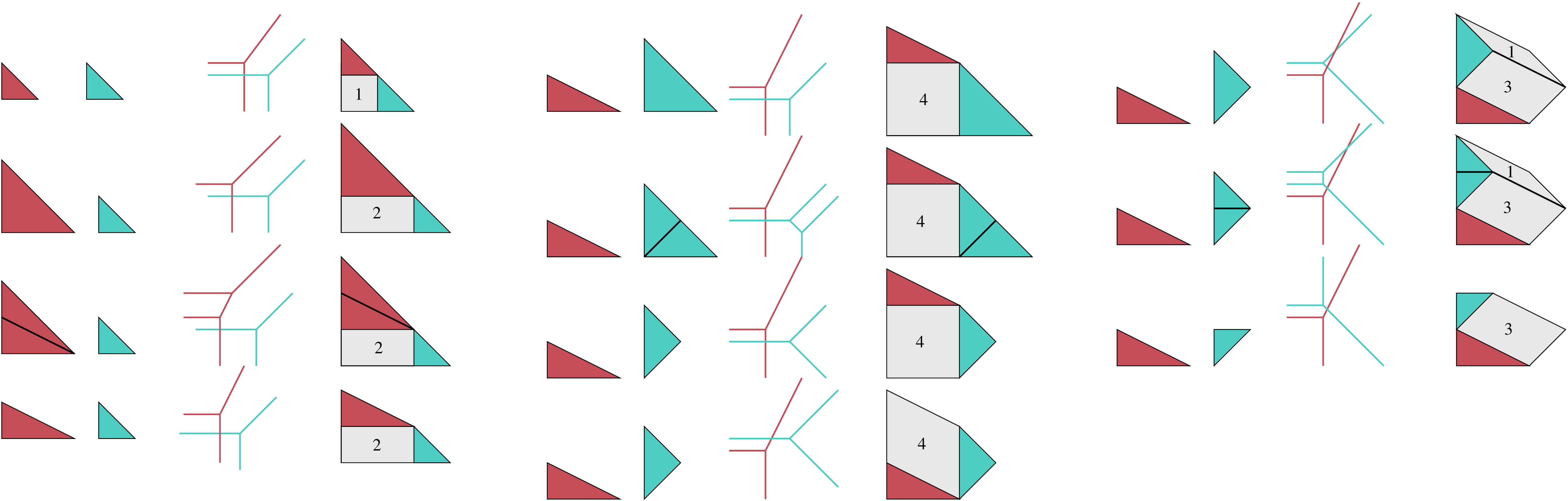}
\end{center}
\caption{Eleven steps in the tropical regeneration algorithm as described in Example~\ref{ex:regeneration}. Without being specific about the lift, for each step the subdivided Newton polytopes, the overlay of their associated hypersurfaces and the subdivision of their Minkowski sum is shown. To be read column by column.}
\label{fig:regeneration}
\end{figure}

It turns out to be practical for the required numerical precision to consider alternative lifts for the starting system. In particular the starting mixed cell will be different from what Figure~\ref{fig:regeneration} suggests. See also Remark~\ref{rem:scaling}.
\begin{lemma}
\label{lem:startsystem2}
If we change the lift in Example~\ref{ex:startsystem} to $(\varepsilon^0,\dots,\varepsilon^{n^2+n-1})$ for $\varepsilon>0$ sufficiently small, then there is only the mixed cell $((1,2),(2,3),\dots,(n,n+1))$.
\end{lemma}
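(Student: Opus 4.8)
The plan is to reduce uniqueness to existence and then produce an explicit normal vector. By Example~\ref{ex:startsystem} we know $\textup{MixVol}(L_1,\dots,L_n)=1$, and for $\varepsilon>0$ small the lift $\omega=(\varepsilon^0,\dots,\varepsilon^{n^2+n-1})$ is generic (its entries, being distinct powers of $\varepsilon$, cannot lie on any of the finitely many walls of the secondary fan of $\textup{Cayley}(L_1,\dots,L_n)$), so the mixed cells form a mixed subdivision and their volumes sum to the mixed volume; since each mixed cell is a lattice parallelepiped of positive integral volume, there is exactly one. It therefore suffices to check that the proposed tuple $M:=((1,2),(2,3),\dots,(n,n+1))$ is that cell. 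Here the indices are local to each $L_i$, so $M$ selects in $L_i$ the edge joining the columns $e_{i-1}$ and $e_i$ (with $e_0:=0$), and the $j$th column of $L_i$ carries height $\omega_{(i,j)}=\varepsilon^{(i-1)(n+1)+j-1}$. Following the proof of Lemma~\ref{lem:ineq}, being a mixed cell amounts to producing $p=(p',1)\in\R^n\times\R_{>0}$ such that for each $i$ the functional $\phi_i(j):=p'\cdot(L_i)_{\cdot j}+\omega_{(i,j)}$ on the $n+1$ columns of $L_i$ attains its maximum exactly at $j\in\{i,i+1\}$.

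First I would pin down $p'$ from the $n$ equalities $\phi_i(i)=\phi_i(i+1)$. Substituting $(L_i)_{\cdot i}=e_{i-1}$ and $(L_i)_{\cdot,i+1}=e_i$, and using the identity $(i-1)(n+1)+(i-1)=(i-1)(n+2)$, these become the triangular recursion $p'_i=p'_{i-1}+(1-\varepsilon)\varepsilon^{(i-1)(n+2)}$ (with $p'_0:=0$), whose unique solution is
$$p'_k=(1-\varepsilon)\sum_{\ell=1}^{k}\varepsilon^{(\ell-1)(n+2)}\qquad(k=1,\dots,n).$$
From this I read off $0<p'_1<\dots<p'_n$ and $p'_k\to 1$ as $\varepsilon\to 0^+$, so $p$ has positive last coordinate; I abbreviate by $V_i$ the common value $\phi_i(i)=\phi_i(i+1)$, which tends to $1$.

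It remains to verify the strict inequalities $\phi_i(\gamma)<V_i$ for every $i$ and every column $\gamma\in\{1,\dots,n+1\}\setminus\{i,i+1\}$, which I would do by a three-way case split. If $\gamma=1$ then $\phi_i(1)=\omega_{(i,1)}=\varepsilon^{(i-1)(n+1)}\to 0$ while $V_i\to 1$. If $2\le\gamma<i$, then $\phi_i(\gamma)=p'_{\gamma-1}+\varepsilon^{(i-1)(n+1)+\gamma-1}$, and $V_i-\phi_i(\gamma)$ is a polynomial in $\varepsilon$ whose lowest-degree term $(1-\varepsilon)\varepsilon^{(\gamma-1)(n+2)}$, coming from $p'_{i-1}-p'_{\gamma-1}$, has positive coefficient — the competing exponents $(i-1)(n+2)$ and $(i-1)(n+1)+\gamma-1$ are both strictly larger, the latter because its difference from $(\gamma-1)(n+2)$ equals $(i-\gamma)(n+1)>0$. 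If $\gamma>i+1$, the ``$p'$-parts'' no longer favour $V_i$, since $p'_{\gamma-1}>p'_{i-1}$; here I would write $V_i-\phi_i(\gamma)=(p'_{i-1}-p'_{\gamma-1})+\varepsilon^{(i-1)(n+2)}-\varepsilon^{(i-1)(n+1)+\gamma-1}$ and observe that the leading term $-\varepsilon^{(i-1)(n+2)}$ of $p'_{i-1}-p'_{\gamma-1}$ cancels, leaving $\varepsilon^{(i-1)(n+2)+1}$ as the lowest-degree surviving term; this dominates because $(i-1)(n+1)+\gamma-1>(i-1)(n+2)+1$ whenever $\gamma-1>i$, while the remaining terms of $p'_{i-1}-p'_{\gamma-1}$ have exponents at least $i(n+2)>(i-1)(n+2)+1$. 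As there are only finitely many such inequalities, each valid for all sufficiently small $\varepsilon>0$, one threshold serves them all, and then $M$ is a mixed cell — hence the unique one by the first paragraph. The only genuine obstacle is this bookkeeping of $\varepsilon$-exponents, and in particular the case $\gamma>i+1$, which relies on the precise spacing of the surviving powers forced by the staircase lift.
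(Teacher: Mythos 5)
Your proof is correct, but it takes a different route from the paper's. Both arguments open the same way: since $\textup{MixVol}(L_1,\dots,L_n)=1$ and a generic lift yields mixed cells of positive integral volume summing to the mixed volume, there is exactly one mixed cell, so existence implies uniqueness. Where you diverge is in the verification that $M=((1,2),\dots,(n,n+1))$ is in fact a mixed cell. The paper stays inside the circuit machinery of Lemma~\ref{lem:ineq}: for each $\gamma$ it writes down the unique (up to sign) circuit $c$ with $c_\gamma=-1$ in the nullspace of the $2n\times(2n+1)$ Cayley submatrix, and then exploits the fact that $\omega=(\varepsilon^0,\dots,\varepsilon^{n^2+n-1})$ represents the lexicographic order, so $\omega\cdot c\geq 0$ reduces to a purely combinatorial statement: the first non-zero entry of $c$ is positive. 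This sidesteps all $\varepsilon$-bookkeeping. You instead construct the inner normal $p=(p',1)$ explicitly from the $n$ equalities $\phi_i(i)=\phi_i(i+1)$, obtaining a closed form for $p'_k$, and verify the strict face inequalities $\phi_i(\gamma)<V_i$ directly by comparing the lowest-degree terms in $\varepsilon$ across the same three case splits ($\gamma=1$, $2\le\gamma<i$, $\gamma>i+1$). Your exponent comparisons check out — in particular the cancellation of $\pm\varepsilon^{(i-1)(n+2)}$ in the $\gamma>i+1$ case and the subsequent dominance of $\varepsilon^{(i-1)(n+2)+1}$ are correct. What the paper's route buys is economy: it reuses the circuit characterisation already established and converts the small-$\varepsilon$ analysis into a one-line lexicographic sign check; what your route buys is independence from that machinery, at the cost of the staircase-exponent bookkeeping you flagged. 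One minor nit: your genericity claim (``distinct powers of $\varepsilon$ cannot lie on any wall'') deserves a half-sentence of justification — each wall is cut out by a nonzero rational linear form $c$, and $c\cdot(\varepsilon^0,\dots,\varepsilon^{m-1})$ is a nonzero polynomial in $\varepsilon$, hence nonvanishing for $\varepsilon>0$ small — but this is exactly the standard argument and does not affect correctness.
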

\begin{proof}
Because the mixed volume is $1$ there is only one mixed cell. We will prove that this cell is  $((1,2),(2,3),\dots,(n,n+1))$ by proving that the lift satisfies the circuit inequality for each choice of additional Cayley column index $\gamma$. Let $c$ be the induced circuit with $c_\gamma=-1$. It suffices to prove that the first non-zero entry of $c$ is positive. The proof splits into three cases. Let $e_{a,b}$ denote the unit vector with the entry of the $b$th point in the $a$th configuration equal to $1$.

If the index $\gamma$ is chosen from the $i$th configuration (in which $(i,i+1)$ are chosen for the cell) with $\gamma>i+1$ then by the uniqueness of $c$, we have
$c=e_{i,i+1}-e_{i,\gamma}-e_{i+1,i+1}+e_{i+1,i+2}-\cdots-e_{\gamma-1,\gamma-1}+e_{\gamma-1,\gamma}$ as this vector is in the null space of $\textup{Cayley}(L_1,\dots,L_n)$. It has first non-zero entry positive.

If the index $\gamma>1$ is chosen from the $i$th configuration with $\gamma<i$ then by the uniqueness of $c$, we have $c=e_{\gamma,\gamma}-e_{\gamma,\gamma+1}+e_{\gamma+1,\gamma+1}-e_{\gamma+1,\gamma+2}+\cdots +e_{i-1,i-1}-e_{i-1,i}-e_{i,\gamma}+e_{i,i}$. It has first non-zero entry positive.

If the index $\gamma=1$ is chosen from the $i$th configuration then by the uniqueness of $c$ we have $c=e_{1,1}-e_{1,2}+e_{2,2}-e_{2,3}+\cdots+e_{i-1,i-1}-e_{i-1,i}-e_{i,1}+e_{i,i}$. It has first non-zero entry positive.
\end{proof}
Let again $\prec$ denote the lexicographic ordering on $\R^m$ for any $m$, i.e. the ordering represented by $(\varepsilon^0,\dots,\varepsilon^{m-1})$ with $\varepsilon>0$ infinitesimally small.
\begin{algorithm}
[Tropical regeneration]$ $\label{alg:regeneration}\\
{\bf Input:} A tuple $\A=(A_1,\dots,A_n)$ with $A_i\in\N^{n\times m_i}$.\\
{\bf Output:} The mixed cells of $\A$ for the generic lift $\prec$.
\begin{itemize}
\item For $i=1,\dots,n$ let $B_i$ be the $n\times(n+1)$-matrix $\textup{deg}(A_i)[0,e_1,\dots,e_n]$ and let $L_1=\cdots=L_n$ denote the Newton polytope of a linear polynomial.
\item Let $S:=\{((1,2),(2,3),\dots,(n,n+1))\}$.
\item For $i=1,\dots,n:$
\begin{itemize}
\item Invariant: $S$ is the set of mixed cells of $(A_1,\dots,A_{i-1},L_{i},\dots,L_n)$ with respect to $\prec$.
\item Consequently, $S$ is the set of mixed cells with respect to $\prec$ of $\A':=(A_1,\dots,A_{i-1},[B_iA_i],L_{i+1},\dots,L_n)$
\item Let $\tau$ be the vector with an entry for each column in $\A'$ having entries indexed by $B_i$ equal to $-1$ and all other entries zero.
\item Compute the set $S'$ of mixed cells of $\A'$ with respect to $\prec_\tau$ via Algorithm~\ref{alg:homotopy}.
\item Let $S'':=\{M\in S':\textup{for all } i\textup{ } M_i\textup{ does not index any column of }B_i\}$.
\item Let $S$ be the vectors in $S''$ with $(n+1,n+1)$ subtracted from their $i$th pair.
\end{itemize}
\item Return $S$.
\end{itemize}
\end{algorithm}
\begin{proof}
We will prove the correctness of the algorithm by proving that the invariant is satisfied at each iteration. The base case follows from Lemma~\ref{lem:startsystem2}, and the consequence stated in the algorithm follows from the columns of $B_i$ being lifted low. For the induction step Lemma~\ref{lem:breakingoff} is applied.
\end{proof}

We refer to Remark~\ref{rem:scaling} for a discussion of the reasons for the choice of $L_i$ over $B_i$ and the ordering $\prec$ over the start vector proposed in Example~\ref{ex:startsystem}.
\begin{example}
Consider the cyclic 10-roots system $f_1,\dots,f_{10}\in \CC[x_1,\dots,x_{10}]$, where $f_i$ has $10$ terms and $\textup{deg}(f_i)=i$ for $i=1,\dots,9$ while $f_{10}=x_1\cdots x_{10}-1$. To do the tropical regeneration we need to setup 10 homotopies.
The first of these homotopies works on a Cayley matrix with $10+10\cdot 11=120$ columns, while the last Cayley matrix has $10\cdot 9+2+11=103$ columns.
\end{example}
\begin{remark}
When applying Algorithm~\ref{alg:homotopy} in Algorithm~\ref{alg:regeneration}, we note that in many cases only circuit walls arising when the additional column $\gamma$ is picked inside $[B_iA_i]$ need to be considered. This is because choosing a Cayley column $\gamma$ outside $[B_iA_i]$, the circuit $c$ restricted to $[B_iA_i]$ is forced to have (none or) two non-zero entries adding to zero. As a consequence if $\tau$ has the same values for these entries, then $\langle c,\tau\rangle=0$ and $c$ can be excluded from our considerations by the argument in Section~\ref{sec:generic}.
\end{remark}

\section{Malajovich' method}
\label{sec:malajovich}
In \cite{malajovich} Malajovich proposes to compute the mixed volume of $n$ polytopes by intersecting $n$ tropical hypersurfaces in a way somewhat similar to Algorithm~\ref{alg:regeneration}. The key observation is that if coefficients are generic and one of the tropical hypersurfaces is ignored, then the intersection of the remaining tropical hypersurfaces is a graph embedded in $\R^n$. This graph may or may not be connected. To find the connected components, Malajovich
computes the intersection of the graph with a generic classical
hyperplane intersecting all components. This computation is a problem of the same kind in lower dimension. By recursion the intersection is computed, all components of the embedded graph are traversed, and the result is finally intersected with the ignored tropical hypersurface.

At a first glance it
 seems that our choice of a tropical hyperplane rather than a classical one
will only lead to minor differences between the two algorithms.
However, in~\cite{malajovich} bringing the hypersurfaces in general position and choosing a generic classical hyperplane is done by picking random floating point coefficients. As a consequence subsequent computations must be carried out in floating point arithmetic. Later branching in the algorithm will depend on these computations. Attempts are made to predict the required precision and therefore the method is unlikely to fail due to inconsistent round off. However, it is not an exact method and our symbolic perturbations are quite different.

Besides exactness, our contribution is that of applying reverse search. The advantages are two-fold. On one hand parallelisation is easier and on the other memory usage, which was reported as a problem in~\cite{malajovich}, is lower. Reverse search
could also be applied in the setting of Malajovich, but again correctness and possibly termination would depend on floating point approximations.

A major theoretical contribution by Malajovich is
the time complexity bound on his algorithm in terms of  
quermassintegrals i.e. in terms of the geometry of the setting.
This makes his method radically different from earlier methods like~\cite{li,Mizutani2007}, where the enumeration tree has limited geometric meaning.

Let $A_1,\dots,A_n$ and $\omega_1,\dots,\omega_n$ be fixed with $\omega_1,\dots,\omega_n$ generic. Malajovich bounds the complexity of his algorithm by bounding the number of edges in $T(A_1,\omega_1)\wedge\cdots\wedge T(A_{n-1},\omega_{n-1})$, while also making estimates for the number of edges in the recursion.
The following theorem is essentially a reformulation of his result
in our setting.
\begin{theorem}
The number of edges in $T(A_1,\omega_1)\wedge\cdots\wedge T(A_{n-1},\omega_{n-1})$ is at most $3\cdot n!\cdot\textup{MixVol}(\textup{conv}(A_1),\dots,\textup{conv}(A_{n-1}),\sum_{i=1}^{n-1}\textup{conv}(A_i))$ under the assumption that $\sum_{i=1}^{n-1}\textup{conv}(A_i)$ is full-dimensional and $\omega_1,\dots,\omega_{n-1}$ are generic.
\end{theorem}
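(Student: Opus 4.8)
The plan is to set up a combinatorial dictionary between the edges of the complex $G:=T(A_1,\omega_1)\wedge\dots\wedge T(A_{n-1},\omega_{n-1})$ and certain cells of a mixed subdivision, and then to bound the latter by a mixed volume. Write $Q_i:=\textup{conv}(A_i)$ and $P:=\sum_{i=1}^{n-1}Q_i$, and let $\Sigma$ be the mixed subdivision of $P$ induced by $(\omega_1,\dots,\omega_{n-1})$. Because $P$ is full dimensional and the $\omega_i$ are generic, $\Sigma$ is a \emph{fine} mixed subdivision: every maximal cell is a Minkowski sum $D=C_1+\dots+C_{n-1}$ with $C_i$ a simplex of the regular triangulation of $Q_i$, one has $\dim D=\sum_i\dim C_i=n$, and every summand-wise face $\sum_iF_i$ (with $F_i\subseteq C_i$ a face) is again a cell of $\Sigma$. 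As recalled in Section~\ref{sec:tropicalbackground}, $G$ is a subcomplex of $\bigwedge_i\Delta(A_i,\omega_i)=\textup{NF}(\sum_iL(A_i,\omega_i))\wedge(\R^n\times\{1\})$, which is combinatorially dual to $\Sigma$ with a $k$-cell corresponding to an $(n-k)$-cell, and a cell lies in $T(A_i,\omega_i)$ exactly when its dual $i$-th Minkowski summand $C_i$ is at least $1$-dimensional. Hence the edges of $G$ correspond bijectively to the $(n-1)$-cells of $\Sigma$ all of whose $n-1$ summands are segments; I will call these the \emph{mixed facets} of $\Sigma$, and it suffices to bound their number.

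First I would relate mixed facets to maximal cells. A mixed facet $F=f_1+\dots+f_{n-1}$ is an $(n-1)$-cell, hence a facet of at least one maximal cell $D=C_1+\dots+C_{n-1}$; being a summand-wise face of $D$ forces $f_i\subseteq C_i$, so $\dim C_i\ge1$ for all $i$, and since $\sum_i\dim C_i=n$ this means exactly one $C_j$ is a triangle while the others are edges. Call such a $D$ \emph{almost mixed with fat index $j$}. Conversely, an almost mixed cell has exactly three mixed facets, obtained by replacing its triangular summand by each of its three edges — this is where the factor $3$ comes from. Mapping each mixed facet to a maximal cell containing it is injective into the set of pairs (almost mixed cell, one of its three mixed facets), so the number of edges of $G$ is at most $3\sum_{j=1}^{n-1}N_j$, where $N_j$ is the number of maximal cells of $\Sigma$ of signature $(1,\dots,1,2,1,\dots,1)$ with the $2$ in slot $j$.

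Next I would bound each $N_j$. Scaling the summands of each cell gives $\textup{vol}(\sum_i\lambda_iQ_i)=\sum_D\lambda^{\textup{sig}(D)}\textup{vol}(D)$, so the total Euclidean volume of the cells of $\Sigma$ of a fixed signature $\alpha$ equals the coefficient of $\lambda^\alpha$ in $\textup{vol}(\sum_i\lambda_iQ_i)$. Taking $\alpha$ with the $2$ in slot $j$ and substituting $\lambda_j\mapsto\mu_j+\mu_n$ identifies that coefficient with $\tfrac12\textup{MixVol}(Q_1,\dots,Q_{n-1},Q_j)$, the mixed volume with $Q_j$ repeated. On the other hand every cell counted by $N_j$ is, up to a lattice-affine change of coordinates, a prism over a lattice triangle, hence has Euclidean volume at least $\tfrac12$, so $N_j\le 2\cdot\tfrac12\textup{MixVol}(Q_1,\dots,Q_{n-1},Q_j)=\textup{MixVol}(Q_1,\dots,Q_{n-1},Q_j)$. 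Summing over $j$ and using multilinearity of the mixed volume, the number of edges of $G$ is at most $3\,\textup{MixVol}(Q_1,\dots,Q_{n-1},\sum_{i=1}^{n-1}Q_i)$, which is even a little stronger than the claimed bound, since $n!\ge1$ and the $n!$ is slack.

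The step I expect to be the main obstacle is making the duality of the first paragraph completely precise: keeping careful track of the unbounded edges of $G$, and verifying that genericity of the $\omega_i$ really produces a fine mixed subdivision, so that both ``summand-wise faces of a cell are cells of $\Sigma$'' and ``the fat summand of an almost mixed cell is a triangle'' hold. A secondary point needing care is the identity that the total volume of the cells of $\Sigma$ of a fixed signature equals the corresponding mixed-volume coefficient — standard, but easy to misstate — after which the remaining estimates are routine.
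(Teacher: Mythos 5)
Your argument is correct and follows the same basic strategy as the paper's: dualize edges of $G$ to the $(n-1)$-cells of the induced mixed subdivision all of whose $n-1$ summands are segments, observe that each such cell is a facet of an ``almost mixed'' maximal cell (one triangular summand, the rest segments) which has at most three such facets, and then bound the number of almost mixed cells by comparing a lower bound on their individual volumes with the coefficient of the appropriate monomial in the volume polynomial. The only substantive difference is that the paper appends a trivial singleton configuration $A_n$ so as to phrase everything in terms of full-dimensional cells of an $n$-fold subdivision and then invokes the crude bound $1/n!$ for the volume of a lattice $n$-polytope, whereas you exploit the prism-over-lattice-triangle shape to obtain the sharp lower bound $\tfrac12$, which removes the $n!$ from the constant entirely --- a genuine, if modest, tightening of the stated bound.
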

\begin{proof}
To simplify notation, define $A_n$ as the $n\times 1$ zero matrix and $\omega_n=0\in\R^1$.
The edges that we count are dual to cells in the regular mixed subdivision of $A_1,\dots,A_{n}$ induced by $\omega_1,\dots,\omega_n$. By the type of a cell $Z$ we mean the vector $v\in\N^n$ where $v_i$ is the dimension of the $i$th summand of $Z$. By genericity of $\omega$, we have that $\sum_i v_i=n$. Each full-dimensional cell contributes to a term of the volume polynomial of $\textup{conv}(A_1),\dots,\textup{conv}(A_n)$. We are interested in counting the cells of type $(1,\dots,1,0)$. They are facets of cells of type $(1,\dots,1,0)+e_i$ with $i\in\{1,\dots,n-1\}$. In the volume polynomial of $\textup{conv}(A_1),\dots,\textup{conv}(A_n)$ they contribute to the term $\lambda_1\cdots\lambda_{n-1}\lambda_i$. By basic properties of mixed volume the coefficient of this term is $\textup{MixVol}(\textup{conv}(A_1),\dots,\textup{conv}(A_{n-1}),\textup{conv}(A_i))$. Since each cell can contribute at most its volume times $3n!$ edges (3 arising because the summand from $A_i$ is two-dimensional), we get by multilinearity
that the total number of edges in $T(A_1,\omega_1)\cap\cdots\cap T(A_{n-1},\omega_{n-1})$ is bounded by $$3\cdot n!\cdot(\textup{MixVol}(\textup{conv}(A_1),\dots,\textup{conv}(A_{n-1}),\textup{conv}(A_1))+\cdots$$ $$+\textup{MixVol}(\textup{conv}(A_1),\dots,\textup{conv}(A_{n-1}),\textup{conv}(A_{n-1})))=$$ $$3\cdot n!\cdot\textup{MixVol}(\textup{conv}(A_1),\dots,\textup{conv}(A_{n-1}),\sum_{i=1}^{n-1}\textup{conv}(A_i)).$$
\end{proof}
Successively replacing each $A_i$ by the Newton polytope of an affine function, we obtain upper bounds for the number of bistellar flips in our algorithm. Notice, however, that it is possible for our algorithm to move along the same edge more than once in the $i$th iteration. This is because higher degree hypersurfaces can intersect a tropical curve in several points, which then may move along the same edge as the hypersurface is deformed. Therefore our bound must be multiplied by, say,
$m_i\choose 2$. This is the price we pay for not actually storing the graph, but rather having a \emph{memoryless} algorithm. It is not much worse than the method of~\cite{malajovich}, since that needs to intersect the embedded graph with $T(A_n,\omega_n)$.

\section{Solving non-generic systems}
\label{sec:nongensolving}
Until now we have been interested in tropical square systems with generic coefficients. This has the advantage that all solutions are isolated points and we have seen that these points can be found via homotopy methods. For numerical systems with non-generic coefficients it has been known at least since~\cite{Morgan:1987} that the total degree homotopy will find all \emph{isolated} solutions of the target system. We will prove a similar statement for tropical homotopies. Our proof will use notions (\emph{balanced, weighted and pure fans, links, stable intersection ($\cap_\textup{st}$)}) and properties of stable intersection which for example are described in~\cite{tropStab}. Proposition~\ref{prop:intersectionbound} below is a generalisation of~\cite[Proposition 3.2.1]{ossermanpayne} to any number of fans.
By the codimension of a fan $\F$ in $\R^n$ we mean $n-\textup{dim}(\F)$.

\begin{lemma}
\label{lem:intersectionbound}
Let $\F$ be a balanced fan in $\R^n$ with positive weights and let $L$ be a rational linear subspace of $\R^n$. Then $\textup{codim}(\F\cap L)\leq \textup{codim}(\F)+\textup{codim}(L)$.
\end{lemma}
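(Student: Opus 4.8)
The plan is to prove the equivalent inequality $\textup{dim}(|\F|\cap L)\geq \textup{dim}(\F)+\textup{dim}(L)-n$ by induction on $\textup{codim}(L)$, using stable intersection as the device that keeps the intermediate objects balanced with positive weights. Two situations are immediate and serve as anchors: if $\textup{codim}(L)=0$, i.e. $L=\R^n$, then $|\F|\cap L=|\F|$ and the inequality reads $\textup{dim}(\F)\geq\textup{dim}(\F)$; and if $\textup{dim}(\F)+\textup{dim}(L)\leq n$ the right-hand side is $\leq 0$ while the left-hand side is $\geq 0$ because the origin lies in $|\F|\cap L$ (here the weights play no role). So assume $\textup{codim}(L)\geq 1$ and $\textup{dim}(\F)\geq 1$, and fix a rational hyperplane $H$ with $L\subseteq H$; the induction step then splits according to whether the stable intersection $\F\cap_{\textup{st}}H$ is empty.

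Suppose first $\F\cap_{\textup{st}}H\neq\emptyset$. By the properties of stable intersection recorded in \cite{tropStab} it is then a balanced fan with positive weights of dimension $\textup{dim}(\F)-1$ whose support is contained in $|\F|\cap H$. Viewing $\F\cap_{\textup{st}}H$ as a balanced positively weighted fan inside $H\cong\R^{n-1}$ and noting $\textup{codim}_H(L)=\textup{codim}(L)-1$, the induction hypothesis gives $\textup{dim}(|\F\cap_{\textup{st}}H|\cap L)\geq(\textup{dim}(\F)-1)+\textup{dim}(L)-(n-1)=\textup{dim}(\F)+\textup{dim}(L)-n$. Since $|\F\cap_{\textup{st}}H|\cap L\subseteq|\F|\cap H\cap L=|\F|\cap L$ (using $L\subseteq H$), this yields the desired bound.

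Suppose instead $\F\cap_{\textup{st}}H=\emptyset$. Because $\F$ has positive weights, the fan displacement rule says this is equivalent to a generic translate $H+v$ meeting $|\F|$ in the empty set, i.e. $v\notin|\F|+H$; as $|\F|+H$ is a finite union of polyhedral cones it is closed, so this forces $|\F|+H\neq\R^n$. Writing $|\F|+H=\bigcup_\sigma(\sigma+H)$ over the maximal cones $\sigma$ of $\F$, and observing that each $\sigma+H$ equals $H$, one of the two closed halfspaces bounded by $H$, or all of $\R^n$, the only way the union can avoid being $\R^n$ is that every $\sigma$, hence $|\F|$, lies in one fixed closed halfspace bounded by $H$. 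Invoking the standard fact that a balanced fan with positive weights contained in a closed halfspace is contained in the bounding hyperplane, we get $|\F|\subseteq H$. Now $\F$ itself is a balanced positively weighted fan inside $H\cong\R^{n-1}$ with $L\subseteq H$, so the induction hypothesis gives $\textup{dim}(|\F|\cap L)\geq\textup{dim}(\F)+\textup{dim}(L)-(n-1)\geq\textup{dim}(\F)+\textup{dim}(L)-n$. This completes the induction, and the inequality $\textup{dim}(|\F|\cap L)\geq\textup{dim}(\F)+\textup{dim}(L)-n$ is exactly $\textup{codim}(|\F|\cap L)\leq\textup{codim}(\F)+\textup{codim}(L)$.

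The delicate point is the empty case, and concretely the two tools it rests on: the fan displacement rule in precisely the form ``$\F\cap_{\textup{st}}H=\emptyset$ iff a generic translate of $H$ misses $|\F|$'', which uses that positive weights prevent cancellation of intersection multiplicities, and the ``balanced fan in a closed halfspace lies in the boundary hyperplane'' lemma. The first is a routine consequence of the definition of $\cap_{\textup{st}}$ in \cite{tropStab}. For the second, if it is not stated there outright, I would prove it by a short induction on $\textup{dim}(\F)$ using links and the lineality space: factoring out the lineality (which, being a subspace contained in the halfspace, lies in the boundary hyperplane) reduces to the pointed case, and for a ray $\rho$ along which the functional defining the halfspace vanishes the link of $\F$ at $\rho$ is again balanced with positive weights and confined to the corresponding halfspace of $\R^n/\textup{span}(\rho)$, so by induction it lies in a hyperplane, pushing the maximal cones through $\rho$ into the boundary. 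Everything else — the codimension bookkeeping across the recursion and the reductions to the trivial ranges — is routine, and, as indicated, the argument is the multi-fan analogue of \cite[Proposition~3.2.1]{ossermanpayne} specialised to intersection with a linear space.
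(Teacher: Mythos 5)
Your proof is correct but takes a genuinely different route from the paper's. The paper's argument is direct and non-inductive: set $R=\textup{span}(\F+L)$, choose a generic linear subspace $\LL\subseteq L$ of dimension $\dim R-\dim\F$, and use balancing together with positivity to conclude $|\F|+\LL=R$; the identity $(|\F|\cap L)+\LL=(|\F|+\LL)\cap L=R\cap L=L$ then gives $\dim(|\F|\cap L)\geq\dim L-\dim\LL$, from which the codimension bound follows by a short count. You instead induct on $\textup{codim}(L)$, slicing by a rational hyperplane $H\supseteq L$: when $\F\cap_{\textup{st}}H\neq\emptyset$ you pass to this $(\dim\F-1)$-dimensional balanced positive fan inside $H$ and recurse; when $\F\cap_{\textup{st}}H=\emptyset$ you argue $|\F|\subseteq H$ and recurse with $\F$ itself inside $H$. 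Both routes rest on closely related consequences of balancing with positive weights---the paper's that Minkowski-summing $|\F|$ with a generic complementary subspace of $L$ fills $R$, yours that an empty stable intersection with a hyperplane forces $|\F|$ into that hyperplane---so neither is strictly more elementary, but your recursion makes it very explicit where the hypotheses enter and trades one genericity choice for a sequence of hyperplane slices. One simplification worth pointing out: your empty case takes a detour through the fan displacement rule and a separate ``balanced positive fan in a closed halfspace lies in the bounding hyperplane'' lemma, but Lemma~\ref{lem:nonemptystable} (i.e.~\cite[Corollary~2.3]{tropStab}, which the paper already quotes) yields $|\F|\subseteq H$ in one step: $\F\cap_{\textup{st}}H=\emptyset$ says no facet $C$ of $\F$ has $\dim(C+H)=n$, so every maximal cone, and hence $|\F|$, lies in $H$. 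This removes the need for the halfspace lemma and its proof sketch entirely.
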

\begin{proof}
Let $R=\textup{span}(\F+L)$. Let $\LL\subseteq L$ be a linear subspace of dimension $\textup{dim}(R)-\textup{dim}(\F)$ such that $\textup{dim}(\F+\LL)=\textup{dim}(R)$. We have $(\F\cap L)+\LL=(\F+\LL)\cap(L+ \LL)=R\cap L=L$ 
and get $\textup{dim}(\F\cap L)\geq\textup{dim}(L)-\textup{dim}(\LL)$. This implies $\textup{codim}(\F\cap L)=n-\textup{dim}(\F\cap L)\leq n-\textup{dim}(L)+\textup{dim}(\LL)= n-\textup{dim}(L)+\textup{dim}(R)-\textup{dim}(\F)\leq n-\textup{dim}(L)+n-\textup{dim}(\F)=\textup{codim}(\F)+\textup{codim}(L)$. We used the positivity and balancing when constructing $\LL$ and claiming $\F+\LL=R$.
\end{proof}

\begin{proposition}
\label{prop:intersectionbound}
Let $\F_1,\dots,\F_k$ be balanced fans in $\R^n$ with positive weights. Then $\textup{codim}(\F_1\cap\cdots\cap\F_k)\leq \textup{codim}(\F_1)+\cdots+\textup{codim}(\F_k)$.
\end{proposition}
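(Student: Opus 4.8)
The plan is to reduce Proposition~\ref{prop:intersectionbound} to Lemma~\ref{lem:intersectionbound} by the standard diagonal trick: intersecting $k$ fans in $\R^n$ is the same as intersecting the product fan $\F_1\times\cdots\times\F_k$ in $\R^{nk}$ with the small diagonal $L=\{(x,\dots,x):x\in\R^n\}$. First I would observe that the product $\F:=\F_1\times\cdots\times\F_k$ is again a polyhedral fan in $\R^{nk}$, that it carries positive weights (assign to a maximal product cone $\sigma_1\times\cdots\times\sigma_k$ the product of the weights of the $\sigma_i$), and that it is balanced; balancing of a product of balanced fans is routine since the lattice-normal balancing condition around each codimension-one cone of the product is inherited from the balancing condition of the single factor in which the drop of dimension occurs, the other factors contributing a common multiplicative constant. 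Its dimension is $\dim(\F_1)+\cdots+\dim(\F_k)$, so $\textup{codim}_{\R^{nk}}(\F)=\sum_i\textup{codim}_{\R^n}(\F_i)$.

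Next I would apply Lemma~\ref{lem:intersectionbound} to $\F$ and the rational linear subspace $L\subseteq\R^{nk}$. Since $\dim(L)=n$, we have $\textup{codim}_{\R^{nk}}(L)=nk-n=n(k-1)$, and the lemma gives
\[
\textup{codim}_{\R^{nk}}(\F\cap L)\;\leq\;\textup{codim}_{\R^{nk}}(\F)+n(k-1)\;=\;\sum_{i=1}^k\textup{codim}_{\R^n}(\F_i)+n(k-1).
\]
Now I would identify $\F\cap L$ with $\F_1\cap\cdots\cap\F_k$ under the linear isomorphism $L\cong\R^n$: a point $(x,\dots,x)$ lies in $\F\cap L$ exactly when $x\in\F_i$ for every $i$, so the isomorphism carries $\F\cap L$ onto $\F_1\cap\cdots\cap\F_k$ as a set (and as a fan). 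Hence $\dim(\F\cap L)=\dim(\F_1\cap\cdots\cap\F_k)$, and therefore $\textup{codim}_{\R^{nk}}(\F\cap L)=nk-\dim(\F_1\cap\cdots\cap\F_k)=n(k-1)+\textup{codim}_{\R^n}(\F_1\cap\cdots\cap\F_k)$. Substituting this into the displayed inequality and cancelling the common term $n(k-1)$ from both sides yields exactly $\textup{codim}(\F_1\cap\cdots\cap\F_k)\leq\sum_i\textup{codim}(\F_i)$.

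The only genuine point requiring care — and the step I expect to be the main obstacle — is verifying that the product fan $\F_1\times\cdots\times\F_k$ is balanced with positive weights, so that Lemma~\ref{lem:intersectionbound} actually applies; everything else is bookkeeping with dimensions. By induction it suffices to treat $k=2$, i.e.\ to check that $\F_1\times\F_2$ is balanced in $\R^{2n}$. A codimension-one cone of $\F_1\times\F_2$ has the form $\tau\times\sigma_2$ with $\tau$ codimension one in $\F_1$ and $\sigma_2$ maximal in $\F_2$ (or symmetrically), and the maximal cones containing it are $\sigma_1\times\sigma_2$ as $\sigma_1$ ranges over the maximal cones of $\F_1$ containing $\tau$; the primitive normal vectors are $(v_{\sigma_1},0)$ with $v_{\sigma_1}$ the primitive normal of $\sigma_1$ in $\F_1$ modulo $\tau$, and the weights multiply, so balancing of $\F_1\times\F_2$ at $\tau\times\sigma_2$ is precisely $\big(\sum_{\sigma_1}w(\sigma_1)v_{\sigma_1}\big)\times\{0\}=0$, which holds by balancing of $\F_1$. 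Positivity of the product weights is immediate. I would state this verification briefly and then assemble the inequality as above.
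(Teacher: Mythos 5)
Your proof is correct and takes the same approach as the paper: form the product fan $\F_1\times\cdots\times\F_k$ in $\R^{nk}$, intersect with the diagonal $L$, apply Lemma~\ref{lem:intersectionbound}, and do the codimension bookkeeping. The paper simply asserts the balancedness of the product fan where you verify it, but the argument is identical.
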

\begin{proof}
We note that $\F:=\F_1\times\cdots\times\F_k$ is a balanced fan and conclude from Lemma~\ref{lem:intersectionbound} that $\textup{codim}(\F\cap L)\leq\textup{codim}(\F)+\textup{codim}(L)$ where $L$ is the $n$-dimensional diagonal $\{(a,\dots,a):a\in\R^n\}\subseteq(\R^n)^k$. Hence
$\textup{codim}(\F_1\cap\cdots\cap\F_k)=n-\textup{dim}(\F_1\cap\cdots\cap\F_k)=n-\textup{dim}(\F\cap L)=\textup{codim}(\F\cap L)-(k-1)n\leq\textup{codim}(\F)+\textup{codim}(L)-(k-1)n=\textup{codim}(\F)=\sum_i\textup{codim}(\F_i)$.
\end{proof}
The proposition is most useful if applied to links in fans.
\begin{corollary}
For pure balanced fans $\F_1,\dots,\F_k$ with positive weights in $\R^n$ and $\omega\in\textup{supp}(\F_1\wedge\cdots\wedge\F_k)$:
$$\textup{dim}(\textup{link}_\omega(\F_1\cap\cdots\cap\F_k))\geq n-\sum_i\textup{codim}(\F_i).$$
\end{corollary}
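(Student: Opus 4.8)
The plan is to derive the corollary from Proposition~\ref{prop:intersectionbound} by applying it to the links of the fans $\F_i$ at the common point $\omega$. First I would recall that for a pure balanced fan $\F$ with positive weights in $\R^n$ and a point $\omega\in\textup{supp}(\F)$, the link $\textup{link}_\omega(\F)$ is again a balanced fan with positive weights (inheriting the weights of the cones of $\F$ containing $\omega$), and it has the same dimension as $\F$, hence $\textup{codim}(\textup{link}_\omega(\F))=\textup{codim}(\F)$. This is a standard fact about links of balanced weighted fans; I would cite~\cite{tropStab} for it as the paper does for the surrounding notions.

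Next I would use the compatibility of links with intersection: for $\omega\in\textup{supp}(\F_1\wedge\cdots\wedge\F_k)$ one has
$$\textup{link}_\omega(\F_1\cap\cdots\cap\F_k)=\textup{link}_\omega(\F_1)\cap\cdots\cap\textup{link}_\omega(\F_k),$$
since near $\omega$ the intersection $\F_1\cap\cdots\cap\F_k$ looks like the intersection of the corresponding local fans. With this identity in hand, apply Proposition~\ref{prop:intersectionbound} to the balanced positively weighted fans $\textup{link}_\omega(\F_1),\dots,\textup{link}_\omega(\F_k)$:
$$\textup{codim}\bigl(\textup{link}_\omega(\F_1\cap\cdots\cap\F_k)\bigr)=\textup{codim}\bigl(\textstyle\bigcap_i\textup{link}_\omega(\F_i)\bigr)\leq\sum_i\textup{codim}(\textup{link}_\omega(\F_i))=\sum_i\textup{codim}(\F_i).$$
Rewriting $\textup{codim}=n-\textup{dim}$ gives $\textup{dim}(\textup{link}_\omega(\F_1\cap\cdots\cap\F_k))\geq n-\sum_i\textup{codim}(\F_i)$, which is exactly the claimed inequality.

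The main obstacle — really the only nontrivial point — is justifying that the link operation behaves well: that $\textup{link}_\omega(\F_i)$ is still balanced with positive weights (so that the proposition applies) and that links commute with taking the intersection of the fans. The balancedness of the link follows because the balancing condition is local around each ridge, and the link records exactly this local data with the same weights; positivity of weights is then immediate. The commutation with intersection holds because $\textup{link}_\omega$ only depends on the germ of the fan at $\omega$, and the set-theoretic intersection of germs is the germ of the intersection. I would state both facts with a pointer to~\cite{tropStab} rather than reprove them, since they are part of the standard toolkit for stable intersection that the section explicitly relies on.
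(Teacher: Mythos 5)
Your proof is correct and follows essentially the same route as the paper: identify $\textup{link}_\omega(\F_1\cap\cdots\cap\F_k)$ with $\bigcap_i\textup{link}_\omega(\F_i)$, apply Proposition~\ref{prop:intersectionbound} to the links (which are balanced with positive weights and inherit codimension), and translate back to a statement about dimension. The paper's proof is the same one-line calculation; your additional remarks about why the link preserves balancing, positivity and codimension are exactly the unstated facts the paper relies on.
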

Here we can think of the dimension of the link as a local dimension around $\omega$.
\begin{proof}
Using the definition of codimension, the inequality $\textup{codim}(\textup{link}_\omega(\F_1\cap\cdots\cap\F_k))=\textup{codim}(\textup{link}_\omega(\F_1)\cap\cdots\cap\textup{link}_\omega(\F_k))\leq \textup{codim}(\textup{link}_\omega(\F_1))+\cdots+\textup{codim}(\textup{link}_\omega(\F_k))=\textup{codim}(\F_1)+\cdots+\textup{codim}(\F_k)$ implies the result.
\end{proof}
\begin{lemma}
\label{lem:mixvolchar}
Let $P_1,\dots,P_n$ be convex polytopes in $\R^n$.
The mixed volume $\textup{MixVol}(P_1,\dots,P_n)$ is zero if and only if there exists a subset $I\subseteq\{1,\dots,n\}$ such that $\textup{dim}(\sum_{i\in I}P_i)<|I|$.
\end{lemma}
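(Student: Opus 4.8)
The plan is to reduce the statement to a linear-algebra transversal problem, using the classical fact that a positive mixed volume is always witnessed by edges of the $P_i$. I will freely use the standard properties of $\textup{MixVol}$: symmetry, nonnegativity, invariance under independent translations, monotonicity under inclusion in each slot, Minkowski-multilinearity in each slot, and the fact that for segments emanating from the origin $\textup{MixVol}([0,w_1],\dots,[0,w_n])=|\det(w_1,\dots,w_n)|$ (positive iff $w_1,\dots,w_n$ are linearly independent). All of these are immediate from the defining volume polynomial.

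The core step is the \emph{edge criterion}: $\textup{MixVol}(P_1,\dots,P_n)>0$ iff there are segments $S_i\subseteq P_i$ (possibly degenerate to points) with $\dim(S_1+\cdots+S_n)=n$. The ``if'' direction is monotonicity together with the determinant formula. For ``only if'', write $P_i=\textup{conv}(u_{i,0},\dots,u_{i,r_i})$; since $\sum_j\lambda_j u_{i,j}=u_{i,0}+\sum_{j\ge 1}\lambda_j(u_{i,j}-u_{i,0})$ with $\sum_{j\ge 1}\lambda_j\le 1$ whenever $\lambda_j\ge 0$, $\sum_j\lambda_j=1$, we get $P_i\subseteq u_{i,0}+\sum_{j\ge 1}[0,\,u_{i,j}-u_{i,0}]$. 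By monotonicity and translation invariance, positivity of $\textup{MixVol}(P_1,\dots,P_n)$ forces positivity of the mixed volume of these zonotopes; expanding the latter by multilinearity over the segment summands exhibits it as a sum of terms $\textup{MixVol}([0,u_{1,j_1}-u_{1,0}],\dots,[0,u_{n,j_n}-u_{n,0}])$, so at least one term is positive, i.e. the vectors $u_{i,j_i}-u_{i,0}$, $i=1,\dots,n$, are linearly independent. Then $S_i:=[u_{i,0},u_{i,j_i}]\subseteq P_i$ do the job, since $S_1+\cdots+S_n$ is a translate of the parallelepiped spanned by those vectors.

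To finish, set $L_i:=\textup{dir}(P_i)$, the linear span of all differences of points of $P_i$; then $\textup{dir}(\sum_{i\in I}P_i)=\sum_{i\in I}L_i$ and $\dim(\sum_{i\in I}P_i)=\dim(\sum_{i\in I}L_i)$ for every $I\subseteq\{1,\dots,n\}$. A family of segments $S_i\subseteq P_i$ with $\sum_i S_i$ full-dimensional is the same as a linearly independent transversal $(v_1,\dots,v_n)$ with $v_i\in L_i$: direction vectors of segments lie in $L_i$, and conversely every $v\in L_i$ has a positive multiple realised by a segment of $P_i$, since $P_i-P_i$ is a neighbourhood of $0$ in $L_i$, and scaling does not affect independence. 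By Rado's theorem on independent transversals of linear subspaces, such a transversal exists iff $\dim(\sum_{i\in I}L_i)\ge |I|$ for all $I$. Combining with the edge criterion, $\textup{MixVol}(P_1,\dots,P_n)=0$ iff no independent transversal exists iff some $I$ satisfies $\dim(\sum_{i\in I}P_i)=\dim(\sum_{i\in I}L_i)<|I|$, which is exactly the claim.

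The only non-formal point is the ``only if'' half of the edge criterion: one must produce genuine segments inside the $P_i$, not merely inside an enclosing zonotope, which is why I pass through zonotopes, expand multilinearly, and then reinterpret the surviving edge vectors as actual edges between vertices of the $P_i$. I expect this to be the main obstacle; the rest is bookkeeping plus a citation of Rado's (or Hall's) theorem. A tropical alternative would identify $\textup{MixVol}(P_1,\dots,P_n)$ with the degree of a stable intersection of the associated tropical hypersurfaces and deduce the lemma from tightness of the codimension inequality in Proposition~\ref{prop:intersectionbound}, but the matroid argument above is shorter.
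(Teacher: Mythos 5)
Your proof is correct, and its backbone — reduce positivity of the mixed volume to the existence of an independent transversal of edge directions and then invoke Rado's theorem — is exactly the paper's. Where you diverge is in how you establish the intermediate ``edge criterion.'' The paper phrases it as the existence of a mixed cell candidate and disposes of it in one sentence by noting that mixed cell candidates have non-empty mixed cell cones (so generic lifts produce a fully mixed cell of positive volume, and conversely), drawing on the machinery already built in Section~\ref{sec:mixedcellcones}. You instead give a self-contained, purely convex-geometric derivation: enclose each $P_i$ (up to translation) in a zonotope generated by its edge vectors, use monotonicity and translation invariance, and expand the zonotope's mixed volume by multilinearity into a sum of $n$-fold determinants, one of which must be positive. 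This buys independence from the earlier polyhedral-lift setup and would work as a standalone lemma in a convexity text, at the cost of a longer argument. Your passage from segments to an independent transversal of the subspaces $L_i$, and the application of Rado's theorem to subspaces rather than to the finite sets $C_i$ of vertex differences, are equivalent to the paper's formulation (since $P_i-P_i$ is a neighbourhood of $0$ in $L_i$). The one place to be careful, which you flag yourself, is that monotonicity plus the zonotope enclosure only gives you \emph{some} linearly independent $n$-tuple of direction vectors $u_{i,j_i}-u_{i,0}$; your reinterpretation of these as genuine segments $[u_{i,0},u_{i,j_i}]\subseteq P_i$ is fine because $P_i$ is convex, so the potential obstacle you anticipate does not materialise.
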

While a proof using the BKK theorem and sparse
resultant varieties appeared after Theorem~2.29 in~\cite{tropRes}, we shorten the proof and avoid algebraic geometry. 
\begin{proof}
Since mixed cell candidates have non-empty mixed cell cones, the mixed volume is non-zero if and only if $(P_1,\dots,P_n)$ has a mixed cell candidate. Let $C_i=\{a-b:a,b\textup{ are vertices of }P_i\}$. By applying Rado's generalisation of Hall's Theorem~\cite[Theorem~1]{rado} to $(C_1,\dots,C_n)$, we get that a mixed cell candidate exists if and only if for all $I\subseteq\{1,\dots,n\}:\textup{dim}(\sum_{i\in I}\textup{span}(C_i))\geq|I|$.
\end{proof}

\begin{lemma}
\label{lem:nonemptystable}
For balanced tropical fans $\F_1$ and $\F_2$ in $\R^n$ with positive weights the stable intersection $\F_1\cap_{st}\F_2$ is non-empty if and only if there exist facets $C_1$ of $\F_1$ and $C_2$ of $\F_2$ such that $\textup{dim}(C_1+C_2)=n$.
\end{lemma}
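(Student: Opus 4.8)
The plan is to characterize non-emptiness of the stable intersection $\F_1 \cap_{st} \F_2$ via the existence of a pair of facets whose Minkowski sum is full-dimensional. I would approach this as a direct two-direction argument, leaning heavily on the standard \emph{fan displacement rule} (generic translation) description of stable intersection: for a generic vector $v \in \R^n$, we have $\F_1 \cap_{st} \F_2 = \lim_{\epsilon \to 0}(\F_1 \cap (\epsilon v + \F_2))$, and a cone of $\F_1 \cap_{st} \F_2$ through the origin arises exactly from pairs of cones $C_1 \in \F_1$, $C_2 \in \F_2$ with $C_1 \cap (\epsilon v + C_2) \neq \emptyset$ for all small $\epsilon > 0$, which forces $C_1 + (-C_2)$ (equivalently $C_1 - C_2$) to contain a neighborhood of $v$, i.e. $\operatorname{span}(C_1 - C_2) = \R^n$, i.e. $\dim(C_1 + C_2) = n$ since both are cones containing the origin.

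First I would handle the ``if'' direction. Suppose facets $C_1$ of $\F_1$ and $C_2$ of $\F_2$ satisfy $\dim(C_1 + C_2) = n$. Then $C_1 + (-C_2)$ is a full-dimensional cone, so for generic $v$ in its interior, $v \in C_1 - C_2$, meaning $C_1 \cap (v + C_2) \neq \emptyset$; scaling, $C_1 \cap (\epsilon v + C_2) \neq \emptyset$ for all $\epsilon > 0$ (the relation $v \in C_1 - C_2$ is scale-invariant since these are cones). Hence this intersection persists in the limit and contributes a nonempty cone to $\F_1 \cap_{st} \F_2$; one should note the weight attached is a positive multiple (a lattice index times the product of the incoming positive weights) and hence nonzero, so the cell genuinely survives.

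Next, the ``only if'' direction. If $\F_1 \cap_{st} \F_2 \neq \emptyset$, pick a point $\omega$ in its support and a maximal cone $\sigma$ of the stably intersected fan through $\omega$; by the displacement-rule description $\sigma$ comes from cones $C_1 \in \F_1$, $C_2 \in \F_2$ with $C_1 \cap (\epsilon v + C_2) \neq \emptyset$ for all small $\epsilon$, forcing $v \in C_1 - C_2$ for generic $v$, hence $\dim(C_1 + C_2) = n$. Enlarging $C_1$ and $C_2$ to facets of $\F_1$ and $\F_2$ respectively only enlarges the Minkowski sum, so we still have a full-dimensional sum. That produces the required pair of facets.

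The main obstacle I anticipate is not the geometric content but pinning down which precise formulation of ``stable intersection'' the paper wants to invoke (via the fan displacement rule / generic perturbation, as in the cited \cite{tropStab}) and making sure the weight of the produced cell is positive, so that ``non-empty'' in the sense of a weighted balanced fan with positive weights is actually certified — a degenerate cancellation of weights cannot happen here because each contributing cell gets a strictly positive weight, but this should be stated rather than glossed. A secondary minor point is being careful that the facet enlargement in the ``only if'' direction keeps us inside $\F_1$ and $\F_2$ (true since every cone of a pure fan is a face of a facet) and that $\dim(C_1 + C_2)$ can only increase, which is immediate.
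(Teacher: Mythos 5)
Your proof is correct and fills in what the paper leaves implicit. The paper's proof is simply a pointer to \cite[Definition~2.4]{tropStab} (which defines stable intersection via the fan displacement rule, i.e.\ pairs of cones with full-dimensional Minkowski sum, weighted by a perturbation count) and \cite[Corollary~2.3]{tropStab} (which guarantees positivity of the resulting weights when the inputs have positive weights). You reconstruct exactly this argument from first principles: the ``if'' direction by finding a generic displacement $v\in\textup{int}(C_1-C_2)$ so that some contributing pair survives, plus the positivity-of-weights remark to rule out cancellation; the ``only if'' direction by reading off a contributing pair from a non-empty cone of the stable intersection and enlarging to facets. One cosmetic point worth tightening: genericity of $v$ should be understood as global genericity in $\R^n$, and the argument that a globally generic $v$ can be found in $\textup{int}(C_1-C_2)$ rests on that interior being open and full-dimensional while the non-generic locus has measure zero --- you say this, but it could be said more explicitly since it is the crux of the ``if'' direction. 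Overall your route and the paper's are the same argument, with the difference that the paper outsources it to the reference while you make it self-contained.
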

\begin{proof}
The lemma follows from~\cite[Definition~2.4]{tropStab} and~\cite[Corollary~2.3]{tropStab}.
\end{proof}
By the tropical hypersurface $T(P)$ of a lattice polytope $P$ we mean the hypersur\-face of any tropical polynomial with Newton polytope $P$ and coefficients $0$.
\begin{theorem}
\label{thm:impliesstablezero}
For lattice polytopes $P_1,\dots,P_n$ in $\R^n$, if $T(P_1)\cap\cdots\cap T(P_n)=\{0\}$ then $\textup{MixVol}(P_1,\dots,P_n)\not=0$.
\end{theorem}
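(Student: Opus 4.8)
The plan is to prove the contrapositive: assuming $\textup{MixVol}(P_1,\dots,P_n)=0$, I would produce a nonzero point in $T(P_1)\cap\cdots\cap T(P_n)$, so the intersection cannot be $\{0\}$. The starting tool is Lemma~\ref{lem:mixvolchar}: vanishing mixed volume gives a subset $I\subseteq\{1,\dots,n\}$ with $\textup{dim}(\sum_{i\in I}P_i)<|I|$. Let $V=\textup{span}(\sum_{i\in I}P_i-p)$ for a basepoint $p$; this is a linear subspace of dimension at most $|I|-1$, hence of codimension at least $n-|I|+1$ in $\R^n$. The geometric idea is that each $T(P_i)$ with $i\in I$ is (up to a translation depending on the choice of coefficients $0$, which actually puts $0$ into each $T(P_i)$) a balanced fan whose support, by translation-invariance along $\sum_{i\in I}P_i^\perp$, contains a large linear subspace transverse to $V$; intersecting the fans in $I$ one therefore still has a fan of positive dimension, and in fact a \emph{linear} direction is forced to survive.

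First I would make precise that $T(P_i)$ for $i\in I$ contains the orthogonal complement $V^\perp$ as a linear subspace of its support: because $P_i\subseteq p_i+V$ for each $i\in I$, the tropical polynomial defining $T(P_i)$ (with all coefficients $0$) is, as a piecewise-linear function, constant in the directions of $V^\perp$, so $x\in T(P_i)\Leftrightarrow x+v\in T(P_i)$ for all $v\in V^\perp$. Since $0\in T(P_i)$ (the maximum over a set of lattice points, all evaluated to the constant term, is attained at every term when the argument is $0$ — more carefully, at least two monomials with distinct exponents tie, because $P_i$ has at least two vertices whenever it is not a point; the degenerate case where some $P_i$ is a single point makes $\textup{dim}(\sum_{i\in I}P_i)$ even smaller and is handled the same way), we conclude $V^\perp\subseteq T(P_i)$ for every $i\in I$. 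Next I would invoke Proposition~\ref{prop:intersectionbound} applied to the fans $T(P_i)$, $i\in I$: each tropical hypersurface $T(P_i)$ is a balanced fan of codimension $1$ with positive weights, so $\textup{codim}\bigl(\bigcap_{i\in I}T(P_i)\bigr)\le |I|$, i.e.\ $\dim\bigl(\bigcap_{i\in I}T(P_i)\bigr)\ge n-|I|$.

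Now the two facts combine. On one hand $\bigcap_{i\in I}T(P_i)\supseteq V^\perp$, a linear space of dimension $n-\dim V\ge n-|I|+1$. On the other hand the full intersection $T(P_1)\cap\cdots\cap T(P_n)$ is a subfan of $\bigcap_{i\in I}T(P_i)$, and adding the remaining $n-|I|$ hypersurfaces drops the dimension by at most $n-|I|$ in total (again Proposition~\ref{prop:intersectionbound}, now with all $n$ fans). So $\dim\bigl(T(P_1)\cap\cdots\cap T(P_n)\bigr)\ge n-n=0$; this alone is not enough, since $\{0\}$ has dimension $0$. The extra input I need is that the \emph{linear} part $V^\perp$ of positive dimension survives: intersecting a linear space $W=V^\perp$ of dimension $\ge n-|I|+1$ with the remaining $n-|I|$ tropical hypersurfaces $T(P_j)$, $j\notin I$, each contributes codimension at most $1$ inside $W$ (apply Lemma~\ref{lem:intersectionbound} to $\textup{link}_0$ of each $T(P_j)$ intersected with the subspace $W$), so $\dim\bigl(W\cap\bigcap_{j\notin I}T(P_j)\bigr)\ge (n-|I|+1)-(n-|I|)=1$. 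Hence $T(P_1)\cap\cdots\cap T(P_n)$ contains a cone of dimension at least $1$, which certainly contains nonzero points, contradicting $T(P_1)\cap\cdots\cap T(P_n)=\{0\}$.

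The main obstacle I anticipate is the bookkeeping around the degenerate cases and around the precise sense in which a tropical hypersurface ``is'' a fan with the origin in its support: one must handle $P_i$ that are lower-dimensional or single points (where $T(P_i)$ may be all of $\R^n$ or where the ``at least two terms'' condition needs care), and one must make sure that the fan $W\cap\bigcap_{j\notin I}T(P_j)$ is genuinely pure and balanced with positive weights so that Lemma~\ref{lem:intersectionbound} / Proposition~\ref{prop:intersectionbound} apply at the link — this is where I would lean on the stable-intersection machinery (Lemma~\ref{lem:nonemptystable} and the cited properties from~\cite{tropStab}) rather than set intersection, since $\cap_{st}$ always returns a balanced fan of the expected codimension and is contained in the set-theoretic intersection. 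A clean alternative is to phrase the whole argument via stable intersections: $T(P_1)\cap_{st}\cdots\cap_{st}T(P_n)$ has dimension exactly $0$ iff the mixed volume is nonzero, and the linear-subspace argument above shows that when $\textup{MixVol}=0$ the ordinary intersection strictly contains the stable one and in fact contains a ray through the origin.
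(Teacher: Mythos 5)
Your proof is essentially correct, and it takes a genuinely cleaner route than the paper at the point where the two diverge. Both start from Lemma~\ref{lem:mixvolchar} to extract a subset $I$ with $\dim(\sum_{i\in I}P_i)<|I|$, and both build the same linear subspace (your $W=V^\perp$ is exactly what the paper calls $V=\bigcap_{i\in I}L_i^\perp$), observing that it sits inside every $T(P_i)$, $i\in I$, and has codimension at most $|I|-1$. From there the paper makes a further case split on the remaining indices --- those $j>d$ with $V\subseteq T(P_j)$ versus those with $V\not\subseteq T(P_j)$ --- and for the latter passes to the \emph{stable} intersections $V\cap_{\textup{st}}T(P_j)$, invoking Lemma~\ref{lem:nonemptystable} to show they are non-empty of codimension $1$ inside $V$, before applying Proposition~\ref{prop:intersectionbound} to these stable intersections viewed as fans living in $V$. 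You instead apply Proposition~\ref{prop:intersectionbound} once, directly in $\R^n$, to the collection consisting of $W$ (a rational linear subspace, hence a balanced fan of weight $1$) together with the $n-|I|$ hypersurfaces $T(P_j)$, $j\notin I$, each balanced of codimension $1$ with positive weights; this immediately gives $\textup{codim}\bigl(W\cap\bigcap_{j\notin I}T(P_j)\bigr)\le(|I|-1)+(n-|I|)=n-1$, so the set intersection (which always contains $0$) has dimension at least $1$ and hence a nonzero point. Your route thus avoids both the $D$-split and the entire stable-intersection detour, which is a real simplification.

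One remark on the worry you raise at the end: you do not actually need $W\cap\bigcap_{j\notin I}T(P_j)$ to be pure or balanced, and you do not need stable intersection to rescue this. Proposition~\ref{prop:intersectionbound} bounds the codimension of the \emph{set-theoretic} intersection and places hypotheses only on the input fans $\F_1,\dots,\F_k$, not on their intersection; all your inputs ($W$ and the $T(P_j)$) satisfy them. The degenerate cases are also harmless: if some $P_j$ is a single point then $T(P_j)=\emptyset$, the hypothesis $T(P_1)\cap\cdots\cap T(P_n)=\{0\}$ fails, and the theorem holds vacuously; lower-dimensional but non-degenerate $P_j$ still give balanced codimension-$1$ hypersurfaces. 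So the stable-intersection fallback you hedge with is not needed, and your more direct argument stands as written.
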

\begin{proof}
Let $L_i$ denote the affine span of $P_i$ translated to the origin.
Suppose $\textup{MixVol}(P_1,\dots,P_n)=0$. Then by Lemma~\ref{lem:mixvolchar} there exists a selection $I\subseteq\{1,\dots,n\}$ such that $\textup{dim}(\sum_{i\in I}L_i)<|I|$. Without loss of generality $I=\{1,\dots,d\}$. Define $V:=\bigcap_{i\in I}L_i^\perp\subseteq \bigcap_{i\in I}T(P_i)$ of codimension strictly less than $d$. The remaining $P_{d+1},\dots,P_n$ are
assumed ordered such that $V\subseteq T(P_i)$ for $i=d+1,\dots,D$ and $V\not\subseteq T(P_i)$ for $i=D+1,\dots,n$. Hence $V\subseteq\sum_{i=1}^D T(P_i)$.

We argue that for $i\in\{D+1,\dots,n\}$, we have $V\cap_\textup{st} T(P_i)\not=\emptyset$.
If we can find a facet $F$ of $T(P_i)$ such that $V\not\subseteq\textup{span}(F)$ then we are done by Lemma~\ref{lem:nonemptystable}. Suppose for contradiction that for every facet $F$ we have $V\subseteq\textup{span}(F)$. Then $V$ is perpendicular to each edge of $P_i$ and therefore perpendicular to $P_i$. We get $V\subseteq T(P_i)$ --- contradicting our earlier assumptions.

Therefore, by additivity of codimension for stable intersection, the stable intersections $V\cap_\textup{st}T(P_{D+1}),\dots,V\cap_\textup{st}T(P_{n})$ have codimension $1$ \emph{inside} $V$.

By Proposition~\ref{prop:intersectionbound} the intersection
$(V\cap_\textup{st}T(P_{D+1}))\cap\cdots\cap(V\cap_\textup{st}T(P_{n}))$
has codimension at most $n-D$ in $V$, which again has codimension strictly less than $d\leq D$ in $\R^n$. Consequently $\textup{codim}((V\cap_\textup{st}T(P_{D+1}))\cap\cdots\cap(V\cap_\textup{st}T(P_{n})))<(n-D)+D=n$ and therefore this intersection must contain a non-zero point $u\in\R^n$. For $i=1,\dots,D$ we have $u\in V\subseteq T(P_i)$. Because $u$ is in the intersection above, we also have $u\in T(P_{D+1})\cap\cdots\cap T(P_n)$. Consequently, $u\in T(P_1)\cap\cdots\cap T(P_n)$, implying $T(P_1)\cap\cdots\cap T(P_n)\not=\{0\}$ as desired.
\end{proof}

\begin{theorem}
If we are given a system $\A=(A_1,\dots,A_n)$ with coefficients $\omega=\omega_1\times\cdots\times \omega_n\in\R^m$, an isolated $p\in T(A_1,\omega_1)\cap\cdots\cap T(A_n,\omega_n)$ and generic $\omega'\in\R^m$, then for every $\varepsilon>0$ there exists $\delta>0$ such that for every $t\in (0,\delta)$:
$$T(A_1,\omega_1+t \omega_1')\cap\cdots\cap T(A_n,\omega_n+t \omega_n')\cap B(p,\varepsilon)\not=\emptyset$$ where $B(p,\varepsilon)\subseteq\R^n$ is the open ball centered at $p$ with radius $\varepsilon$.
\end{theorem}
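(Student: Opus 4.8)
The plan is to pass to the \emph{local} tropical system at $p$, observe that it is a system of tropical \emph{fan} hypersurfaces governed by the cells of the dual regular subdivisions, apply Theorem~\ref{thm:impliesstablezero} to it, and finally invoke the tropical BKK theorem to produce a solution of a small generic perturbation. \textbf{Step 1 (localisation).} For each $i$ let $\sigma_i$ be the set of columns $j$ of $A_i$ at which $(\omega_i)_j+\langle(A_i)_j,p\rangle$ attains its maximum over $j$; since $p\in T(A_i,\omega_i)$ we have $|\sigma_i|\geq 2$. Let $\tilde A_i$ be the submatrix of $A_i$ with columns in $\sigma_i$, let $Q_i:=\textup{conv}(\tilde A_i)$ (a lattice polytope, namely the cell of the regular subdivision of $\textup{conv}(A_i)$ dual to $p$), and let $\tilde\omega_i'$ be $\omega_i'$ restricted to the coordinates in $\sigma_i$. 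I would first prove that there exist $\varepsilon_0>0$ and $\delta_0>0$, depending only on $\A,\omega,\omega'$ and $p$, such that for all $t\in[0,\delta_0)$ and all $i$
$$\bigl(T(A_i,\omega_i+t\omega_i')-p\bigr)\cap B(0,\varepsilon_0)=T(\tilde A_i,t\tilde\omega_i')\cap B(0,\varepsilon_0).$$
Indeed, for $j\notin\sigma_i$ the affine function $(\omega_i)_j+t(\omega_i')_j+\langle(A_i)_j,p+y\rangle$ stays strictly below those with $j\in\sigma_i$ once $t$ and $\|y\|$ are small, so near $p$ the tropical maximum is attained only among columns in $\sigma_i$; absorbing $\langle(A_i)_j,p\rangle$ into the coefficient gives the displayed equality. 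I would also record the elementary identity $T(B,t\mu)=t\cdot T(B,\mu)$ for $t>0$, and the special case $T(\tilde A_i,0)=T(Q_i)$ in the notation fixed before Theorem~\ref{thm:impliesstablezero}.

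\textbf{Step 2 (isolatedness forces a fan condition).} Specialising the displayed equality to $t=0$ and intersecting over $i$, its left-hand side becomes $\bigl(\bigcap_iT(A_i,\omega_i)-p\bigr)\cap B(0,\varepsilon_0)$, which equals $\{0\}$ after shrinking $\varepsilon_0$, because $p$ is an isolated point of $\bigcap_iT(A_i,\omega_i)$. Hence $\bigcap_iT(Q_i)\cap B(0,\varepsilon_0)=\{0\}$, and since each $T(Q_i)$ is a fan this improves to $\bigcap_iT(Q_i)=\{0\}$. Theorem~\ref{thm:impliesstablezero} now yields $\textup{MixVol}(Q_1,\dots,Q_n)\neq 0$.

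\textbf{Step 3 (a nearby solution).} It suffices that $\tilde\omega_i'$ be a generic lift of $\tilde A_i$ for each $i$, which holds for generic $\omega'\in\R^m$: the bad $\omega'$ form a lower-dimensional set, being a finite union of preimages of lower-dimensional sets under coordinate projections, and each $\sigma_i$ depends only on the fixed data $p,\omega$. Then the tropical BKK theorem of Section~\ref{sec:mixedcellcones}, applied to $(\tilde A_1,\tilde\omega_1'),\dots,(\tilde A_n,\tilde\omega_n')$, shows that $\bigcap_iT(\tilde A_i,\tilde\omega_i')$ is a finite set whose points, counted with multiplicity, number $\textup{MixVol}(Q_1,\dots,Q_n)>0$; in particular it is non-empty, so we may fix $q\in\bigcap_iT(\tilde A_i,\tilde\omega_i')$. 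Given $\varepsilon>0$, set $\delta:=\min(\delta_0,\varepsilon_0,\varepsilon)/(1+\|q\|)$. For $t\in(0,\delta)$ we have $tq\in B(0,\varepsilon_0)\cap B(0,\varepsilon)$ and $tq\in t\cdot T(\tilde A_i,\tilde\omega_i')=T(\tilde A_i,t\tilde\omega_i')$ for every $i$, so Step 1 gives $p+tq\in T(A_i,\omega_i+t\omega_i')$ for every $i$; thus $p+tq\in\bigcap_iT(A_i,\omega_i+t\omega_i')\cap B(p,\varepsilon)$, which is therefore non-empty.

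\textbf{Main obstacle.} Conceptually the argument is just a reduction to Theorem~\ref{thm:impliesstablezero}, so the real work is Step 1: verifying that, \emph{uniformly} for small $t\geq 0$, the perturbed hypersurface $T(A_i,\omega_i+t\omega_i')$ agrees in a fixed ball around $p$ with a scaled copy of the fan hypersurface $T(Q_i)$ of the dual cell. Once this local-to-global dictionary is in place, the remaining steps are bookkeeping together with Theorem~\ref{thm:impliesstablezero} and the tropical BKK theorem.
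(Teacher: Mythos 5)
Your proof is correct and follows essentially the same route as the paper's: both identify the dual cells $Q_i$ (the paper calls them $P_i$) of $p$ in the regular subdivisions, invoke Theorem~\ref{thm:impliesstablezero} to conclude $\textup{MixVol}(Q_1,\dots,Q_n)\neq 0$, and then use positivity of this mixed volume to exhibit a solution of the perturbed system near $p$. The main presentational difference is that you work on the tropical-hypersurface side, spelling out the local-to-global identification in Step 1 (which the paper leaves implicit as the statement that the link of $\bigcap_iT(A_i,\omega_i)$ at $p$ equals $T(P_1)\wedge\dots\wedge T(P_n)$) and producing the explicit point $p+tq$ via the tropical BKK theorem, whereas the paper stays on the dual mixed-subdivision side and argues by continuity of the upper normal of the mixed cell $Z$ --- dual formulations of the same idea.
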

\begin{proof}
  Let $P_i$ denote the projection $\pi(\textup{face}_{p_i\times\{1\}}(\textup{conv}_j((A_i)_j\times((\omega_i)_j))))$. Then $\sum_iP_i$ is the dual cell of $p$ in the mixed subdivision of $A_1,\dots,A_n$ induced by $\omega$. Because the link of $\bigcap_iT(A_i,\omega_i)$ at $p$ equals $T(P_1)\wedge\dots\wedge T(P_n)$ and $p$ is isolated we get that $\textup{MixVol}(P_1,\dots,P_n)\not=0$ by Theorem~\ref{thm:impliesstablezero}. Hence the mixed subdivision of $P_1,\dots,P_n$ induced by the restriction $\omega''$ of $\omega'$ to the vertices of $P_1,\dots,P_n$ has a fully mixed cell $Z$. Note that for $t>0$ sufficiently small, the mixed subdivision of $\A$ induced by $\omega+t\omega'$ is a refinement of that induced by $\omega$ and the fully mixed cell $Z$ appears in it. The coordinates of the solution dual to $Z$ are obtained continuously from the upper normal of the lift of $Z$. Therefore for sufficiently small $t$, this solution to $T(A_1,\omega_1+t \omega_1')\cap\cdots\cap T(A_n,\omega_n+t \omega_n')$ is $\varepsilon$-close to $p$.
\end{proof}
The proof shows that the isolated solution is obtained from the mixed cells with respect to a perturbed lift $\prec_\omega$ simply by computing the normal (for the lift $\omega$) with last coordinate $1$ of each lifted cell and projecting it to $n$ dimensions.

\begin{algorithm}
[Non-generic system solving]$ $\label{alg:nongeneric}\\
{\bf Input:} A tuple $\A=(A_1,\dots,A_n)$ with $A_i\in\N^{n\times m_i}$, the mixed cells $S$ of $\A$ for a generic lift $\prec$ and a vector $\omega\in\R^{m_1}\times\cdots\times\R^{m_n}$.\\
{\bf Output:} A finite superset $P$ of the isolated points in $T(A_1,\omega_1)\cap\cdots\cap T(A_n,\omega_n)$.
\begin{itemize}
\item Apply Algorithm~\ref{alg:homotopy} to $\A,S,\prec$ and $\tau:=\omega$ to obtain $S'$.
\item Let $P=\emptyset$.
\item For $M\in S'$:
\begin{itemize}
\item Compute the hyperplane intersection $$\{p\}:=T((A_1)_{M_1},(\omega_1)_{M_1})\cap\cdots\cap T((A_n)_{M_n},(\omega_n)_{M_n})$$ where the subscripts denote the restriction to columns and entries indexed by $M_i$.
\item  $P:=P\cup\{p\}$.
\end{itemize}
\item Return $P$.
\end{itemize}
\end{algorithm}
Deciding if each of the produced solutions is isolated is a matter of deciding if $\bigcap_i\textup{link}_p(T(A_i,\omega_i))=\{0\}$. The following problem is in NP, but is it NP-hard?
\begin{itemize}
\item Given $(A_1,\dots,A_n)\in \N^{n\times m_1}\times\cdots\times\N^{n\times m_n}$, decide if $\bigcap_iT(A_i,0)\not=\{0\}$.
\end{itemize}
This does not seem to be an immediate consequence of the results in~\cite{theobald}.

We encourage the reader to compare Theorem~\ref{thm:impliesstablezero} with \cite[Corollary 5.2.4]{ossermanpayne}, which relates the mixed volume to the number of Puiseux series solutions with a particular valuation to a polynomial system with Puiseux series coefficients.

\section{Implementation and experiments}
\label{sec:experiments}

We have implemented Algorithm~\ref{alg:regeneration} in C++ as a part of the command line computer algebra system \textup{gfan}, which is specialised in Gr\"obner fan computations and tropical geometry. The implementation relies on the GNU multiprecision library \cite{gmp} and for parallelisation on an abstract tree traversal C++11 library contributed to the gfan project by Bjarne Knudsen. No linear programming solver is used in the algorithm.

As described in Section~\ref{sec:generic} we follow a symbolically perturbed line in $\R^m$ when performing tropical homotopies. In particular no random floating point numbers need to be generated. Floating point numbers are however used when finding circuit inequalities as generators for nullspaces of matrices. These computations are always checked in exact machine arithmetic afterwards. If the check fails, the implementation falls back on exact GMP arithmetic for computing the nullspace. If the primitive generator for the nullspace has entries which do not fit in 32 bits, the whole mixed cell enumeration fails. Similarly, entries of the input $A_1,\dots,A_n$ must fit in signed 16 bit words for our implementation. The first restriction causes one test example (Gaukwa 9) to fail.

We describe how the traversal problem is given to the abstract parallel tree traverser. To traverse a tree using $k$ threads, $k$ \emph{traverser objects} are created and placed at the root of the tree. Each traverser must supply methods for computing the number of children at its current vertex, moving to its $i$th child and moving one step up in the tree. The tree traversal library then takes care of moving the traversers around so that every leaf is computed. When implementing Algorithm~\ref{alg:regeneration} we combine the $n$ steps into a single tree rather than doing several forest traversals. In this almost memoryless implementation only the $k$ paths from the root to the vertices of the traversers are stored.

Two performance improvements of the implementation still remain to be done. One is the introduction of \emph{rank-1 updates} of the inverse matrix of the Cayley submatrix indexed by the mixed cell. Rather, at the moment, an $n\times n$ floating point matrix is inverted for each mixed cell under consideration. The other improvement is representing circuit inequalities \emph{sparsely}.

 We ran experiments on a system with two Intel Xeon E2670 CPUs, each with 8 cores and each core supporting hyperthreading. We chose to run with at most 16 threads as double speed cannot be expected with hyperthreading.
Doubling speed can also not be expected when going from 8 to 16 threads as some cores will share caches and bus access. Moreover, the Intel Turbo Boost technology makes it impossible to achieve linear speed-up when increasing the number of threads, as it allows dynamically changing clock frequency based on factors such as temperature.
Indeed the clock frequencies 3.3 and 3.0 GHz were typically observed for 1 and 16 threads respectively, making $14.55$ the largest possible theoretical speed-up factor when going from 1 to 16 threads.

We chose to run our software on the example classes appearing in~\cite[Table~3]{malajovich}. Most of these classes were also tested in~\cite{leeli}. The results for 1 and 16 threads are shown in the table in Figure~\ref{fig:timings} together with the timings from~\cite{leeli} (``2.4GHz Intel Core 2 Quad CPU'') and~\cite{malajovich} (``SGI Altix ICE 8400''). With the timings of~\cite{leeli} being outdated, we list the few timings reported in the newer article~\cite{li2014} for the single threaded implementation on unspecified hardware: Cyclic-15: 8.4h, Eco-20: 3.1h, Katsura-15: 48m, Noon-21: 54m.

From the table it is hard to scientifically draw general conclusions about the relative performance of the algorithms and their implementations across the example classes --- one reason being the different computer architectures. We hope to run the software of~\cite{malajovich} and~\cite{leeli} on our test machine in the future.

\begin{figure}
{
\begin{tabular}{|l|rrrr|}
\hline
Problem&n&Mixed vol & 1 thread & 16 thr.\\
\hline
Cyclic10&10&35940&5.1&0.6\\
Cyclic11&11&184756&32.0&2.7\\
Cyclic12&12&500352&152.4&11.7\\
Cyclic13&13&2704156&998.2&73.3\\
Cyclic14&14&8795976&4999.0&366.2\\
Cyclic15&15&35243520& &2017.3\\
Cyclic16&16&135555072& &10151.2\\
\hline
Noon16&16&43046689&86.3&7.5\\
Noon17&17&129140129&217.4&17.1\\
Noon18&18&387420453&540.5&41.4\\
Noon19&19&1162261429&1342.1&98.4\\
Noon20&20&3486784361&3268.4&243.5\\
Noon21&21&10460353161&7922.0&581.6\\
Noon22&22&31381059565& &1390.7\\
Noon23&23&94143178781& &3256.4\\
\hline
Chandra15&15&16384&38.9&3.9\\
Chandra16&16&32768&98.2&8.1\\
Chandra17&17&65536&246.8&18.3\\
Chandra18&18&131072&610.8&46.5\\
Chandra19&19&262144&1520.3&113.1\\
Chandra20&20&524288&3709.9&274.0\\
Chandra21&21&1048576&8951.9&656.9\\
Chandra22&22&2097152& &1574.6\\
Chandra23&23&4194304& &3664.9\\
\hline
Katsura15&16&32730&31.2&3.8\\
Katsura16&17&65280&76.4&7.4\\
Katsura17&18&131070&182.2&15.8\\
Katsura18&19&261576&449.4&37.1\\
Katsura19&20&524286&1049.7&79.9\\
Katsura20&21&1047540&2526.4&190.1\\
Katsura21&22&2097018&5807.5&428.4\\
Katsura22&23&4192254& &961.1\\
Katsura23&24&8388606& &2194.5\\
\hline
Gaukwa5&10&14641&1.0&0.2\\
Gaukwa6&12&371293&22.0&1.9\\
Gaukwa7&14&11390625&520.3&38.9\\
Gaukwa8&16&410338673&13627.7&1008.3\\
Gaukwa9&18& &  &  \\
\hline
Eco19&19&131072&648.3&51.0\\
Eco20&20&262144&1500.4&115.1\\
Eco21&21&524288&3472.8&259.1\\
Eco22&22&1048576&7962.8&585.8\\
Eco23&23&2097152&18248.6&1289.6\\
Eco24&24&4194304& &3021.4\\
Eco25&25&8388608& &6594.2\\
\hline
\end{tabular}
\begin{tabular}{|rr|}
\hline
Mal. 8t.& Lee,Li 1t.\\
\hline
& \\
& \\
39.5& 57\\
206& 504\\
850& 4034\\
4070& 36428\\
& \\
\hline
& \\
& \\
1230& \\
2870& 635\\
6460& 1109\\
& 4302\\
& 9214\\
& 24265\\
\hline
& \\
& \\
& \\
518& \\
1270& \\
3080& 462\\
7580& 1067\\
& 2601\\
& 7381\\
\hline
557& 2570\\
1880& 14561\\
5310& 75619\\
14200& \\
(300)& \\
& \\
& \\
& \\
& \\
\hline
& \\
& \\
70& 275\\
1020& 10702\\
& 370099\\
\hline
928& \\
1930& \\
4620& \\
8750& \\
& \\
& \\
& \\
\hline
\end{tabular}
}
\caption{Timings in seconds for Algorithm~\ref{alg:regeneration} compared to~\cite{malajovich} and \cite{leeli}.}
\label{fig:timings}
\end{figure}

It is nevertheless worthwhile to make some observations.
\begin{itemize}
\item Gfan performs better on the Katsura class than its competitors.  The speed-up is two orders of magnitude against both competitors on some examples. (The number $(300)$ refers to the heuristic method used in~\cite{malajovich}.)
\item The asymptotic behaviour in each of the families Cyclic and Eco seems worse than that of~\cite{malajovich}. For the Chandra examples it is similar.
\item Gfan gets behind~\cite{leeli} by an order of magnitude in some Chandra examples and almost an order of magnitude behind~\cite{malajovich} in the Cyclic examples.
\end{itemize}
We conclude that the method is competitive for the above reasons.
\begin{itemize}
\item The speed-up factor going from 1 to 16 threads is typically in the range 12-14 with a factor of 14.2 obtained at Chandra 21.
\item Additional statistics produced by the program reveals that the cast from a floating point to an integral vector, and hence the fall back on GMP numbers, only appears in the Gaukwa examples. 
\end{itemize}

\begin{remark}
\label{rem:scaling}
The choice of start orderings in Algorithm~\ref{alg:regeneration} affects the amount of work to be done. On some examples, indeed the choice of Example~\ref{ex:startsystem} and Algorithm~\ref{alg:totaldegreehomotopy} is better, but this depends on the example family. Better timings for the Chandra and Katsura classes can be obtained in this way. There is however a particular reason that we have chosen to use an ordering not being the refinement of a vector with full support. If we did that, then there would be no guarantee that we could carry over the set of mixed cells when we replace an $L_i$ by a $B_i$. Therefore we would have to work with $B_i$ all the time. We tried this and it affected the precision needed in the code. Working with $B_i$ gave considerably more 32-bit overflows that had to be handled with GMP integers. This caused congestion for the parallel implementation at heap allocations and the general performance got worse. Another consequence of using $B_i$ instead of $L_i$ was that the failure in the Gaukwa class appeared already for Gaukwa 8.
\end{remark}

To conclude, the main contribution of the tropical homotopy continuation algorithm to mixed cells enumeration is its exactness and its application of reverse search. Our experiments show that most arithmetic can be handled with machine precision. Moreover, the reverse search allows either a memoryless traversal or a parallelisation as a tree traversal with good scaling properties.

\noindent

\section{Future directions and open problems}
We finish this article by listing some questions for future research in tropical polynomial system solving.
\begin{itemize}
\item What is the complexity of deciding if a solution to a square system is isolated?
\item Is it possible to extend our methods to overdetermined systems?
\item Is it possible to find higher-dimensional solution components with tropical homotopy continuation?
\item Is there an output sensitive algorithm for finding the mixed cells?
\item How is the proposed method best combined with a numerical solver, i.e. which lifts will be convenient for both tropical and numerical homotopy?
\end{itemize}

\newpage
\appendix
\bibliographystyle{hplain}
\bibliography{jensen}
\end{document}